 \def\@textbottom{\vskip \z@ \@plus 582pt}
 \let\@texttop\relax
\newcommand{\nocontentsline}[3]{}
\newcommand{\tocless}[2]{\bgroup\let\addcontentsline=\nocontentsline#1{#2}\egroup}
\newtheorem{theorem}{Theorem}[section]
\newtheorem{lemma}[theorem]{Lemma}
\newtheorem{definition}[theorem]{Definition}
\newtheorem{proposition}[theorem]{Proposition}
\newtheorem{corollaire}[theorem]{Corollary}
\newtheorem{remarque}[theorem]{Remark}
\newtheorem{example}[theorem]{Example}
\newcommand{\R}{\mathbb{R}}
\newcommand{\Z}{\mathbb{Z}}
\renewcommand{\P}{\mathbb{P}}
\newcommand{\Q}{\mathbb{Q}}
\newcommand{\C}{\mathbb{C}}
\newcommand{\N}{\mathbb{N}}
\begin{document}

\title[]{On the equidistribution of some Hodge loci} 
\author{Salim Tayou }
\date\today
\thanks{\textit{École Normale Supérieure, 45 rue d'Ulm, 75005 Paris, France}}
\thanks{\textit{Laboratoire de Mathématiques d'Orsay, Université Paris-Sud, Orsay, France}}
\thanks{E-mail adress: salim.tayou@math.u-psud.fr}

\begin{abstract}
We prove the equidistribution of the Hodge locus for certain non-isotrivial, polarized variations of Hodge structure of weight $2$ with $h^{2,0}=1$ over complex, quasi-projective curves. Given some norm condition, we also give an asymptotic on the growth of the Hodge locus. In particular, this implies the equidistribution of elliptic fibrations in quasi-polarized, non-isotrivial families of K3 surfaces.
\end{abstract}
					
\maketitle

\tableofcontents

\section{Introduction}	

Let $S$ be a complex, quasi-projective curve and let $\{\mathbb{V}_{\Z},\mathcal{F}^{\bullet}\mathcal{V},Q\}$ be an integral, polarized variation of Hodge  structure  of weight $2$ over $S$ with $h^{2,0}=1$. We assume that the bilinear form associated $Q$ is even of signature $(2,b)$. For $s\in S$, let $\rho(\mathbb{V}_{\Z,s})$ be the {\it Picard number} of $\mathbb{V}_{\Z,s}$, that is, the rank of the group of  integral $(1,1)$-classes in $\mathbb{V}_{\Z,s}$. Let $M$ be the minimum value of the integers $\rho(\mathbb{V}_{\Z,s})$ for $s\in S$. It is a classical result of Green \cite[Prop. 17.20]{voisin} and Oguiso \cite{oguiso} that the Noether-Lefschetz locus 
$$\mathrm{NL}(\mathbb{V}_{\Z})=\{s\in S,\, \rho(\mathbb{V}_{\Z,s})> M \}$$ is a countable dense subset of $S$, when the variation of Hodge structure is non-trivial. A weaker result obtained in \cite{bkpsb} assumes the base $S$ to be projective. One might then ask how the set of points where the Picard rank jumps distributes inside $S$. The goal of this paper is to investigate quantitative statements on the distribution of the Hodge locus. 
\bigskip

Say that the polarized variation of Hodge structure $\{\mathbb{V}_{\Z},\mathcal{F}^{\bullet}\mathcal{V},Q\}$ is simple if there is no polarized sub-variation of Hodge structure $\{\mathbb{V'}_{\Z}$, $\mathcal{F}^{\bullet}\mathcal{V'},Q\}$ such that $\mathbb{V}_{\Z}/\mathbb{V'}_{\Z}$ is non-zero and torsion free. In fact, starting from an arbitrary polarized variation of Hodge structure $\{\mathbb{V}_{\Z},\mathcal{F}^{\bullet}\mathcal{V}$, $Q\}$ with $h^{2,0}=1$, the minimal sub-variation $\{\mathbb{V'}_{\Z},\mathcal{F}^{\bullet}\mathcal{V'},Q\}$ for which $\mathbb{V}_{\Z}/\mathbb{V'}_{\Z}$ is torsion free is simple. Its orthogonal with respect to $Q$ is also an integral sub-variation of Hodge structure by Deligne and Schmid's semi-simplicity theorem \cite{deligne,schmid} and which is purely of type $(1,1)$, thus it is trivial, up to taking a finite étale cover of $S$.

Say also that $\{\mathbb{V}_{\Z},\mathcal{F}^{\bullet}\mathcal{V},Q\}$ is non-trivial if the line bundle $\mathcal{F}^{2}\mathcal{V}$ is not isotrivial. By a result of Griffiths \cite[Chapter II]{topics} the first Chern class $\omega$ of $\mathcal{F}^{2}\mathcal{V}$ is positive definite, and the integration with respect to $\omega$ defines a finite measure $\mu$ on $S$. 
\bigskip

Assume that for each $s\in S$, the  lattice $(\mathbb{V}_{\Z,s},Q)$ is isomorphic to an even quadratic lattice $(V,(\,.\,))$ of signature $(2,b)$ with $b\geq 3$ and $Q(x)=\frac{(x.x)}{2}$ for all $x\in V$. 
\bigskip

Let $\mathbb{V}_{\Z}^{\vee}\subset \mathbb{V}_{\Q}$ be the dual local system to $\mathbb{V}_{\Z}$ with respect to $Q$, i.e the fiber $\mathbb{V}_{\Z,s}^{\vee}$ at each point $s\in S$ is equal to $$\{x\in \mathbb{V}_{\Q,s},\,\forall y\in \mathbb{V}_{\Z,s},\, (x.y)\in \Z\}.$$ The fibers of the local system $\mathbb{V}_{\Z}^{\vee}/\mathbb{V}_{\Z}$ are isomorphic to the finite group $V^{\vee}/V$. For $s\in S$ and $\lambda\in\mathbb{V}^{\vee}_{\Z,s}$, there exists $\gamma\in V^{\vee}/V$ such that $\lambda\in \gamma+V$ with the previous identification, and therefore $Q(\lambda)\in Q(\gamma)+\Z$. In general  we have  $Q(\lambda)\in \cup_{\gamma\in V^{\vee}/V}(Q(\gamma)+\Z)$.   

\bigskip 
If  $H$  is a subgroup of $V^{\vee}/V$ we let  $H^{\bot}$ be the orthogonal of $H$ in $V^{\vee}/V$ with respect to the reduction of the form $(\,.\,)$ valued in $\Q/\Z$. The main result of this paper is the following theorem.
\begin{theorem}\label{main}
Let $\{\mathbb{V}_{\Z},\mathcal{F}^{\bullet}\mathcal{V},Q\}$ be a non-trivial, polarized, simple variation of Hodge structure of weight $2$ over a quasi-projective curve $S$ with $h^{2,0}=1$. Assume that the quadratic lattice $(V,(\,,\,))$ is even and that the local system $\mathbb{V}_{\Z}^{\vee}/\mathbb{V}_{\Z}$ is trivial. Let $H$ be a maximal totally isotropic subgroup of $V^{\vee}/V$. 
Let $\mu$ be the measure defined by integrating the first Chern class of $\mathcal{F}^{2}\mathcal{V}$ and let $\gamma\in H^{\bot}\subset V^{\vee}/V$. Set $A_{\gamma}=\{-Q(x+\gamma),\,x\in V \}$. Then
\begin{itemize}
\item[(i)] For $n\in \Q_{>0}$, the number $N(\gamma,n)$ of points $s\in S$ (counted with multiplicity) for which there exists a $(1,1)$-element $x$ in $\mathbb{V}_{\Z,s}^{^\vee}$ of class $\gamma$ in $\mathbb{V}_{\Z,s}^{\vee}/ \mathbb{V}_{\Z,s}$ and $(x.x)=-2n$ is equal to zero if $n\notin A_{\gamma}$, otherwise it satisfies  

$$N(\gamma,n)\sim\mu(S)\frac{(2\pi)^{1+\frac{b}{2}}.n^{\frac{b}{2}}}{\sqrt{|V^{\vee}/V|}\Gamma(1+\frac{b}{2})}.\prod_{p}\mu_p(\gamma,n,V)$$
as $n$ tends to infinity along $A_{\gamma}$, where  $$\prod_{p<\infty}\mu_p(\gamma,n,V)\asymp 1.$$ 
If $S$ is projective, then the error term is  $O_{\epsilon}(n^{\frac{2+b}{4}+\epsilon})$ for every $\epsilon>0$.
\item[(ii)] The set of such points equidistributes in $S$ with respect to $\mu$.
\end{itemize}
\end{theorem}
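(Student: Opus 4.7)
The plan is to translate the counting problem into Fourier coefficients of a vector-valued modular form of weight $1+b/2$ for the Weil representation of $\mathrm{Mp}_2(\Z)$, and then to apply the Eisenstein/cuspidal decomposition together with the Siegel--Weil formula to extract the main term and a power-saving error.

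After passing to a finite étale cover of $S$ (harmless, since by assumption $\mathbb V_\Z^\vee/\mathbb V_\Z$ is a trivial local system), the variation of Hodge structure yields a period map $\varphi\colon \widetilde S \to \Gamma\backslash \mathcal D$, where $\mathcal D$ is the Hermitian symmetric domain of type IV attached to $(V,Q)$ and $\Gamma$ is the image of the monodromy. For $\gamma \in V^\vee/V$ and $n \in A_\gamma$, I consider the Heegner divisor
\[
Z(\gamma,n) = \sum_{\lambda \in \gamma+V,\, Q(\lambda) = -n} \lambda^{\perp}
\]
on $\Gamma\backslash \mathcal D$, which is locally finite. By definition of the Hodge locus, $N(\gamma,n)$ equals the intersection number $\varphi^{*}[Z(\gamma,n)] \cdot [\widetilde S]$.

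The main input is Borcherds' modularity theorem (equivalently, the Kudla--Millson theta lift): the generating series $\Phi(\tau) = \sum_{\gamma,n} [Z(\gamma,n)]\, q^{n}\, \mathfrak e_\gamma$ is a vector-valued modular form of weight $1+b/2$ with values in $\mathrm{Pic}(\Gamma\backslash \mathcal D)$. Pulling back via $\varphi$ and pairing with $[\widetilde S]$ yields a scalar vector-valued modular form $F(\tau)$ whose $(\gamma,n)$-th Fourier coefficient is exactly $N(\gamma,n)$. Decomposing $F = F_{\mathrm{Eis}} + F_{\mathrm{cusp}}$, the Eisenstein part is computed by the Siegel--Weil formula as a product of local densities $\mu_p(\gamma,n,V)$ times $n^{b/2}$ with an explicit archimedean constant. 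The class of the Hodge bundle $\mathcal F^{2}\mathcal V$ being, up to a universal constant, the image of the Eisenstein coefficient at $(0,0)$, one reads $\int_{\widetilde S} \varphi^{*} c_1(\mathcal F^{2}\mathcal V)$ as a multiple of $\mu(S)$ and recovers the stated main term. The condition $\gamma \in H^{\perp}$ with $H$ maximally isotropic ensures non-vanishing of the Eisenstein Fourier coefficient at $(\gamma,n)$, hence $\prod_p \mu_p(\gamma,n,V) \asymp 1$ uniformly. In the projective case, $F_{\mathrm{cusp}}$ is a genuine holomorphic cusp form of weight $k = 1 + b/2$, and the Hecke bound on its Fourier coefficients delivers the error $O_\epsilon(n^{k/2 + \epsilon}) = O_\epsilon(n^{(2+b)/4+\epsilon})$.

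For part (ii), the cohomological identity is upgraded to a pointwise one using the Kudla--Millson Schwartz form $\Theta_{\mathrm{KM}}$, a closed $(1,1)$-form on $\Gamma\backslash\mathcal D$ cohomologous to $Z(\gamma,n)$. For any smooth, compactly supported test function $f$ on $S$, the weighted count $N_f(\gamma,n) = \sum_{s} f(s)\,\mathrm{mult}_s\bigl(\varphi^{-1} Z(\gamma,n)\bigr)$ becomes the $(\gamma,n)$-Fourier coefficient of the modular form $\tau \mapsto \int_{\widetilde S} f \cdot \varphi^{*}\Theta_{\mathrm{KM}}(\tau)$. Its Eisenstein part now produces a main term proportional to $\int_S f\, d\mu$ with the same asymptotic coefficient as in (i), so dividing by $N(\gamma,n)$ gives $N_f(\gamma,n)/N(\gamma,n) \to \int_S f\, d\mu / \mu(S)$, which is equidistribution against $\mu$. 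The hard part is handling the quasi-projective case: near the cusps of $S$ the period map has logarithmic singularities, the Hodge bundle only extends as Deligne's canonical extension on a smooth completion $\bar S$, and $\Theta_{\mathrm{KM}}$ must be regularised near the boundary. One must check that the boundary contribution to $N_f(\gamma,n)$ is of strictly smaller order than the Eisenstein main term so that the limiting distribution is still governed by $\mu$; this is the principal technical obstacle and the reason the explicit error rate is asserted only when $S$ is projective.
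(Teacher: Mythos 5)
There are two genuine gaps. First, you misattribute the role of the hypothesis $\gamma\in H^{\bot}$: it has nothing to do with non-vanishing of the local densities (that is governed by local representability of $n$, via Hasse--Minkowski, exactly as for any $\gamma$). In the paper the hypothesis is what makes the quasi-projective case work: one passes to the overlattice $\overline{V}$ (the preimage of $H$), whose discriminant group $H^{\bot}/H$ has no non-trivial isotropic subgroups, so that every primitive isotropic plane is strongly primitive; then the Borcherds relation is extended to Peterson's toroidal compactification (perfect cone decomposition), and for strongly primitive cusps the boundary coefficients $u(\gamma,n,F)$ are shown, via a quasi-modular form argument, to be $O(n^{\frac{b}{2}-1+\epsilon})$ rather than of the size $c(\gamma,n)\asymp n^{b/2}$ of the main term. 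Your proposal only pairs the $\mathrm{Pic}$-valued modular form with $[\widetilde S]$, which is not defined for non-compact $\widetilde S$; after compactifying, the boundary divisors enter the relation with coefficients that are a priori as large as the main term, and you flag this as "the principal technical obstacle" without resolving it. Since the theorem is stated for quasi-projective $S$, this is precisely the point that needs the $H^{\bot}$ hypothesis and the toroidal analysis, not a side issue.

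Second, your argument for (ii) does not go through as stated. The $(\gamma,n)$-Fourier coefficient of $\int_{\widetilde S} f\cdot\varphi^{*}\Theta_{\mathrm{KM}}(\tau)$ equals the weighted count $N_f(\gamma,n)$ only when $f$ is constant (i.e.\ when one pairs with a cycle or a closed form); for a general test function the Kudla--Millson coefficient is the delta current of $Z(\gamma,n)$ plus an exact correction (a Green-current term), so the coefficient picks up a smooth integral that you have not estimated, and the resulting non-holomorphic form has no straightforward Eisenstein/cusp decomposition from which to read asymptotics. The paper's proof of (ii) is entirely different and supplies the missing lower bound: over each small simply connected $\Delta\subset S$ one builds the sphere bundle $\mathcal{S}_{\Delta}$ of norm $-1$ real $(1,1)$-classes and the parallel-transport map $\phi:\mathcal{S}_{\Delta}\to A_{0}=\{Q=-1\}$, which by Green's lemma is a local diffeomorphism off a measure-zero real-analytic set; Eskin--Oh's equidistribution of the radial projections of $\gamma+V$ on $A_{0}$ (an ergodic-theoretic input) together with a volume computation identifying $\int_{\mathcal{S}_{\Delta}}\phi^{*}\omega_{A_{0}}$ with a multiple of $\mu(\Delta)$ gives $\liminf$ of the count in $\Delta$ bounded below by the main term times $\mu(\Delta)$. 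Combining these local lower bounds with the global degree identity on the toroidal compactification yields both the asymptotic in (i) and the equidistribution in (ii); without some substitute for this local argument, your cohomological identity alone cannot rule out that a positive proportion of the Hodge locus concentrates near the cusps of $S$ or on small subsets.
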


We refer to Example \ref{eisenstein} for the definition of the factors $\mu_p(\gamma,n,V)$. The product $\prod_{p<\infty}\mu_p(\gamma,n,V)$ is called the {\it singular series}. The Hodge locus has a schematic structure (see \cite[Chapter 17]{voisin}). The  multiplicity of a point evoked in Theorem \ref{main} is the multiplicity in this schematic sense.
\bigskip

This number can also be seen as the multiplicity of intersection of $S$ with  special divisors, the so-called { \it Heegner divisors}, in the moduli space of Hodge structure of K3 type over $V$. This moduli space is in fact a Shimura variety of orthogonal type. As a part of their study of the André--Oort conjecture  \cite{andreoort}, Clozel and Ullmo  proved in \cite{clozelullmo} that the Heegner divisors are equidistributed with respect to the measure induced by the Bergman metric (\cite[Chap VIII]{helgason}). The latter is simply given, up to an absolute constant, by integrating the top power of the first Chern class of the Hodge bundle. What we prove here is the equidistribution of the intersection of the Heegner divisors with any fixed quasi-projective curve as above with respect to the measure given by integration the first Chern class of the Hodge bundle. 
\bigskip

\begin{corollaire}
Let $\{\mathbb{V}_{\Z},\mathcal{F}^{\bullet}\mathcal{V},Q\}$ be a non-trivial, polarized, simple variation of Hodge structure of weight $2$ over a quasi-projective curve $S$ and $\mu$ the measure on $S$ defined by integrating the first Chern class of $\mathcal{F}^{2}\mathcal{V}$.  Then the set of points $s\in S$ for which there exists a $(1,1)$-class $x$ in $\mathbb{V}_{\Z,s}$ such that $(x.x)=-2n$ are equidistributed with respect to $\mu$ along positive integers $n$ in the infinite set $\{-Q(x),\, x\in V\}$.
\end{corollaire}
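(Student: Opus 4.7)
The plan is to deduce the corollary from Theorem \ref{main}(ii) specialised to the residue class $\gamma = 0$. The restriction $\gamma \in H^{\bot}$ appearing in Theorem \ref{main} is vacuous in this case, since $0 \in H^{\bot}$ for any subgroup $H \subset V^{\vee}/V$, and the set $A_{0} = \{-Q(x),\, x \in V\}$ exactly matches the exponents appearing in the corollary. Thus the only hypothesis of Theorem \ref{main} not already built into the corollary is the triviality of the local system $\mathbb{V}_{\Z}^{\vee}/\mathbb{V}_{\Z}$ on $S$, which I would produce by passing to a suitable finite cover.

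First I would pass to a finite étale cover $\pi : S' \to S$ trivializing $\mathbb{V}_{\Z}^{\vee}/\mathbb{V}_{\Z}$. This is possible because its fibres are all isomorphic to the fixed finite group $V^{\vee}/V$, so the monodromy factors through the finite group $\operatorname{Aut}(V^{\vee}/V)$ and is killed by a suitable connected étale cover. The pulled back polarized variation on $S'$ is still non-trivial, since $\pi^{*}\mathcal{F}^{2}\mathcal{V}$ remains non-isotrivial under a finite étale base change. Simplicity may fail under pullback; invoking the structure statement recalled in the introduction, on a possibly further finite étale cover $S''$ we obtain an orthogonal decomposition $\pi^{*}\mathbb{V}_{\Z} = \mathbb{V}'_{\Z} \oplus T$ with $\mathbb{V}'_{\Z}$ simple, non-trivial, containing the Hodge line $\mathcal{F}^{2}$, and $T$ a trivial sub-VHS of type $(1,1)$. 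The local system $\mathbb{V}^{\vee}/\mathbb{V}$ remains trivial on $S''$, so Theorem \ref{main} applies to $\mathbb{V}'_{\Z}$ over $S''$.

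Then I would translate the counting: an integral $(1,1)$-class $x \in \mathbb{V}_{\Z, s''}$ with $(x.x) = -2n$ decomposes uniquely as $x = x' + y$ with $x' \in \mathbb{V}'_{\Z, s''}$ of type $(1,1)$, $y \in T$, and $-Q(x') - Q(y) = n$. Theorem \ref{main}(ii) applied to $\mathbb{V}'_{\Z}$ at the trivial residue class gives, for each fixed $y \in T$ with $Q(y) \leq n$, equidistribution on $S''$ with respect to the pullback of $\mu$ of the points carrying such an $x'$. Summing over the finitely many admissible $y$ preserves equidistribution on $S''$, and pushing forward along $S'' \to S$ descends it to equidistribution on $S$ with respect to $\mu$, since each $(1,1)$-class at $s \in S$ lifts to $[S'' : S]$ classes on $S''$ and the total mass of the pullback of $\mu$ is $[S'' : S]\cdot \mu(S)$.

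The main obstacle is the step where $\pi^{*}\mathbb{V}_{\Z}$ ceases to be simple. One then has to verify that the lattice hypotheses of Theorem \ref{main} (even form of signature $(2, b')$ with $b' \geq 3$) survive on the simple summand $V'$, which follows from $h^{2,0}(\mathbb{V}') = 1$ once the rank of $T$ is controlled, and to show that the weighted sum of equidistributing families indexed by $y \in T$ with $Q(y) \leq n$ (whose number grows polynomially in $n$) still equidistributes. The latter follows from term-by-term equidistribution together with the asymptotic count in Theorem \ref{main}(i), but formalising it requires a careful weak-convergence argument.
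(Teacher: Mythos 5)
Your core reduction is exactly the paper's proof, which is a one-liner: pass to a finite \'etale cover trivializing $\mathbb{V}_{\Z}^{\vee}/\mathbb{V}_{\Z}$, apply Theorem \ref{main} with $\gamma=0$ (which indeed lies in $H^{\bot}$ for any $H$, and $A_{0}=\{-Q(x),\,x\in V\}$ matches the index set), and descend equidistribution along the cover as you do. That part is fine.

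The detour you build in case simplicity is lost under pullback is both unnecessary and, as written, defective. It is unnecessary because simplicity survives finite \'etale pullback in this setting: since $h^{2,0}=1$, a saturated proper sub-VHS of $\pi^{*}\mathbb{V}_{\Z}$ with nonzero torsion-free quotient either is purely of type $(1,1)$ or has purely $(1,1)$ orthogonal complement; after passing to a Galois cover, the maximal saturated purely $(1,1)$ sub-VHS is canonical, hence invariant under deck transformations, hence descends to a purely $(1,1)$ sub-VHS of $\mathbb{V}_{\Z}$ on $S$, which simplicity over $S$ forces to vanish (a full-rank sublattice does not contradict the paper's definition, as its quotient is torsion). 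So $T=0$ and no decomposition is needed. If the branch were needed, it would not work as you wrote it: first, $\mathbb{V}'_{\Z}\oplus T$ is in general only a finite-index sublattice of $\pi^{*}\mathbb{V}_{\Z}$, so the ``unique decomposition $x=x'+y$ with $x'\in\mathbb{V}'_{\Z}$, $y\in T$'' fails; one must decompose in the dual lattices with a glue class, and the resulting residue classes $\gamma'\in (V')^{\vee}/V'$ need not lie in ${H'}^{\bot}$ for a maximal isotropic subgroup $H'$, so Theorem \ref{main} as stated would not apply to them. Second, summing equidistribution statements over all $y\in T$ with $-Q(y)\le n$ --- a set of size growing polynomially in $n$ --- requires error terms uniform in the residue class and in the shifted parameter $n+Q(y)$ (including its small values); Theorem \ref{main} gives no such uniformity, and the ``careful weak-convergence argument'' you defer is precisely the missing content. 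Excising the branch by the descent argument above makes your proof complete and identical in substance to the paper's.
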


Indeed, we can always take a finite étale cover $\tilde{S}$ of $S$  for which the pullback of the local system $\mathbb{V}_{\Z}^{\vee}/ \mathbb{V}_{\Z}$ is trivial and apply Theorem \ref{main} to $\tilde{S}$ which imply the corollary.  

In particular, we deduce various equidistribution results for points  in some $1$-parameter families of complex varieties. 
\begin{corollaire}\label{k3}
Let $\Lambda_{K_3}$ be the K3 lattice and $P\subset \Lambda_{K_3}$ a primitive Lorentzian anisotropic sublattice of rank $\rho\leq 4$. Let $\mathcal{X}\xrightarrow{\pi} S$ be a non-isotrivial family of K3 surfaces with generic Picard group equal to $P$ over a quasi-projective curve $S$  and let $\{R^{2}\pi_{*}\underline{\Z}_{\mathcal{X}},\mathcal{F}^{\bullet}\mathcal{H},Q\}$ be the induced variation of Hodge structure on $S$. Let $\mu$ be the measure induced by integrating the first Chern class of $\mathcal{F}^{2}\mathcal{H}$. Fix $H\subset P^{\vee}/P$ a maximal isotropic group,  $\gamma\in H^{\bot}$ and let $A_{\gamma}=\{Q(x+\gamma),\, x\in P\}$.
\begin{itemize}
\item[(i)] For $n\in\Q_{>0}$, the number $N(\gamma,n)$ of points $s\in S$ (counted with multiplicity) for which  $\mathcal{X}_{s}$ admits a parabolic line bundle of type $(\gamma,n)$ is zero if $n\notin A_{\gamma}$, otherwise it satisfies $$N(\gamma,n)\sim\mu(S)\frac{(2\pi)^{\frac{22-\rho}{2}}.n^{10-\frac{\rho}{2}}}{\sqrt{|P^{\vee}/P|}\Gamma(\frac{22-\rho}{2})}.\prod_{p<\infty}\mu_p(n,\gamma,V)$$
 as $n$ tends to infinity in $A_{\gamma}$, and where $V=P^{\bot}$.

If $S$ is projective, then the error term is  $O_{\epsilon}(n^{\frac{2+b}{4}+\epsilon})$ for every $\epsilon>0$.
\item[(ii)] The previous set is equidistributed in $S$ with respect to $\mu$.
\item[(iii)] If $P^{\vee}/P$ has no non-trivial isotropic subgroup, then the set of points $s\in S$ (counted with multiplicity) for which $\mathcal{X}_{s}$ admits an elliptic fibration of norm less than $n$ is equidistributed with respect to $\mu$ as $n$ tends to infinity. 
\end{itemize}
\end{corollaire}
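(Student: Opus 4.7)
The plan is to reduce Corollary \ref{k3} to Theorem \ref{main} applied to the transcendental sub-variation of Hodge structure of $R^{2}\pi_{*}\underline{\Z}_{\mathcal{X}}$. Set $V=P^{\bot}\subset\Lambda_{K_3}$; since $\Lambda_{K_3}$ is unimodular of signature $(3,19)$ and $P$ is Lorentzian of rank $\rho\leq 4$, the lattice $V$ is even of signature $(2,20-\rho)$, and the hypothesis $b=20-\rho\geq 3$ of Theorem \ref{main} is automatic. Let $\mathbb{V}_{\Z}\subset R^{2}\pi_{*}\underline{\Z}_{\mathcal{X}}$ be the fiber-wise orthogonal complement of $P$, viewed as a sub-local system (monodromy preserves the generic Picard lattice). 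I would first check the hypotheses of Theorem \ref{main} for $\mathbb{V}_{\Z}$: the weight is $2$ and $h^{2,0}=1$ because the holomorphic $(2,0)$-form of a K3 surface is transcendental; non-triviality follows from non-isotriviality of $\mathcal{X}\to S$ via Torelli; simplicity is forced by the genericity of $P$, since in any proper polarized sub-VHS $\mathbb{V}'$ either $\mathbb{V}'$ or its orthogonal complement in $\mathbb{V}_{\Z}$ would be purely of type $(1,1)$ and hence would produce integral $(1,1)$-classes at every point of $S$, enlarging the generic Picard group; the discriminant local system $\mathbb{V}_{\Z}^{\vee}/\mathbb{V}_{\Z}$ is trivial because unimodularity of $\Lambda_{K_3}$ identifies $V^{\vee}/V$ with $P^{\vee}/P$ (with opposite form), which is constant.

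Next, I would translate the $(1,1)$-classes counted by Theorem \ref{main} into line bundles on $\mathcal{X}_{s}$. Using unimodularity of $\Lambda_{K_3}$ and the standard gluing of discriminant forms on $P$ and $V$, an element $x\in\mathbb{V}_{\Z,s}^{\vee}$ of class $\gamma\in V^{\vee}/V$ lifts, uniquely modulo $P$, to a class $\ell_{x}\in H^{2}(\mathcal{X}_{s},\Z)$ whose $P$-component lies in the coset $-\gamma$ of $P^{\vee}/P$. This $\ell_{x}$ is algebraic precisely when $x$ is of type $(1,1)$, so the correspondence $x\mapsto\ell_{x}$ identifies the points counted by Theorem \ref{main} with those where $\mathcal{X}_{s}$ admits a parabolic line bundle of type $(\gamma,n)$, with matching schematic multiplicities. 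Substituting $b=20-\rho$ in the asymptotic of Theorem \ref{main} gives the prefactor announced in (i), and transporting the equidistribution statement of Theorem \ref{main} yields (ii) directly.

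For (iii), an elliptic fibration on $\mathcal{X}_{s}$ corresponds to a primitive, nef, isotropic class $f\in\mathrm{NS}(\mathcal{X}_{s})$ with $f^{2}=0$. Decomposing $f=f_{P}+f_{V}\in(P\oplus V)\otimes\Q$ forces $f_{P}^{2}=-f_{V}^{2}$ and determines a class $\gamma\in P^{\vee}/P$ that is isotropic for the induced $\Q/\Z$-valued form. The assumption that $P^{\vee}/P$ has no non-trivial isotropic subgroup rules out $\gamma\neq 0$, so $f_{P}\in P$ and $f_{V}\in V$ with $f_{P}^{2}=-f_{V}^{2}>0$ (using that $P$ is anisotropic, so $f_{V}\neq 0$). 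Counting elliptic fibrations of norm at most $n$, where the norm is for instance the intersection with a fixed polarization of $P$, then reduces via the first statement to summing $N(0,m)$ over $m$ in a range of the form $m\leq c\cdot n^{2}$, and Abel summation against the asymptotic of (i) together with the equidistribution of (ii) yields the claim.

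The main obstacle I anticipate is this last step: one must verify that imposing primitivity and nefness removes only lower-order contributions to the count, which amounts to controlling the action of the Weyl group of the generic Picard lattice on isotropic orbits and to showing that the reflection chambers are matched by the summation over $m$. The lattice-theoretic input should be manageable once the anisotropy of $P$ is fully exploited, but this is where the geometric content of (iii) beyond a direct application of Theorem \ref{main} really lies.
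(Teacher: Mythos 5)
Your treatment of (i) and (ii) follows essentially the paper's route: the orthogonal complement of $P$ inside $R^{2}\pi_{*}\underline{\Z}_{\mathcal{X}}$ gives the variation to which Theorem \ref{main} applies, and the decomposition $L=L_{P}+L_{V}$ (this is the paper's Proposition \ref{intersection}) translates $(1,1)$-classes in $\mathbb{V}^{\vee}_{\Z,s}$ into parabolic line bundles of type $(\gamma,n)$. One gloss: your lift $\ell_{x}$ is only defined modulo $P$, and to obtain a \emph{parabolic} (square-zero) bundle you must choose the representative $t+x$ with $(t.t)=2n$; such a $t$ exists precisely when $n\in A_{\gamma}$, which is where the ``zero if $n\notin A_{\gamma}$'' clause of (i) comes from and should be said explicitly.

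The real problem is (iii). You claim that for an isotropic class $f=f_{P}+f_{V}$ the induced class $\gamma=[f_{P}]\in P^{\vee}/P$ is isotropic for the $\Q/\Z$-valued form, so that the hypothesis forces $\gamma=0$ and $f_{P}\in P$. This is false: $f^{2}=0$ gives $Q(f_{P})=-Q(f_{V})$ in $\Q$, and the gluing of discriminant forms inside the unimodular lattice $\Lambda_{K3}$ already identifies $Q([f_{V}])\equiv -Q([f_{P}])\pmod{\Z}$, so no condition on $\gamma$ results. Concretely, for $P=\langle 2\rangle$ with generator $\ell=e+f_{0}$ in a hyperbolic plane $U\subset \Lambda_{K3}$, the isotropic class $e$ satisfies $(e.\ell)=1$, hence has non-trivial $\gamma$, while $P^{\vee}/P\simeq \Z/2$ with $Q(1)=1/4$ has no non-trivial isotropic element. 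So elliptic fibrations of non-trivial type are not excluded by the hypothesis, and your reduction of (iii) to summing $N(0,m)$ over $m\leq c\,n^{2}$ misses all of them (your replacement of the norm of Definition \ref{degree} by an intersection number with a polarization is a further mismatch with the statement being proved). The actual role of the hypothesis is different: it makes the maximal isotropic subgroup $H$ trivial, hence $H^{\bot}=P^{\vee}/P$, so Theorem \ref{main} (equivalently parts (i)--(ii)) is available for \emph{every} class $\gamma$. The paper then shows (Proposition \ref{intersection3}) that $\mathcal{X}_{s}$ admits an elliptic fibration of norm at most $n$ if and only if its period lies on the union of the $\mathcal{Z}(\gamma,-s)$ over all $\gamma$ and $0<s\leq n$ represented by $Q$ in $\gamma+P$ (nefness being arranged by Weyl reflections, which do not increase the norm), and deduces equidistribution by summing the asymptotics of (i) over all $\gamma$ and $s\leq n$. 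Your closing worry about primitivity and nefness is thus handled by the reflection argument, but the missing contribution of non-trivial $\gamma$ is a genuine gap that your argument as written cannot close.
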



For the definition of a parabolic line bundle of type $(\gamma,n)$ and the norm of an elliptic fibration, we refer to Definition\ref{degree}. If the Lorentzian sublattice $P$ is generated by a single element, the corollary says that the number of elliptic surfaces of norm less than $n^2$ (or volume less than $n$) in a generic family of quasi-polarized K3 surfaces "grows like" $n^{20}$. In the case of twistor families of K3 surfaces, an analogous result was shown by Simion Filip in \cite{filip} and an improvement of the error term was given by Bergeron and Matheus in \cite{bergeron}. The main term there grows also like $n^{20}$, and Filip works with the full K3 lattice $\Lambda_{K3}$. His method  is different from ours, although it was the starting point of this paper. Notice also the analogy between the coefficient of the main term in our case and in Filip's case. Indeed, due to the Siegel mass formula (see \cite{eskinsarnak}), the product $\prod_{p<\infty}\mu_p(n,\gamma,P)$  can be expressed as a sum of volumes of some homogeneous spaces (compare to Filip's formula 3.1.6 in \cite{filip}). 
There is also a generalization of the previous corollary which concerns families of hyperkähler manifolds over a quasi-projective curve which we discuss of the section 4.3.


\bigskip
There are several  arithmetic statements which shed light on the arithmetic analogues of the above results, the curve $S$ being replaced by an open subset of the spectrum of the ring of integers of a number field. A result by Charles \cite{charles1} shows that the set of primes  where the reduction of two elliptic curves defined over a number field  are geometrically isogenous is infinite. More recently, Shankar and Tang \cite{tang} proved by using similar techniques that, given a simple abelian surface defined over a number field and which has real multiplication, there are infinitely many places where  its reduction is not absolutely simple. 
\bigskip

\subsection{Outline of the proof} Let us now sketch the proof of Theorem \ref{main}. Let $D_{V}$ be the period domain associated to the quadratic lattice $(V,Q)$, namely the complex analytic variety defined by  $$D_{V}=\{x\in\P(V_{\C}),\,(x.x)=0,\,(x.\overline{x})>0\}.$$
Let $\Gamma_{V}$ be the stable orthogonal group of $V$ defined by $$\Gamma_{V}:=\mathrm{Ker}\left(\mathrm{O}(V)\rightarrow \mathrm{O}(V^{\vee}/V)\right),$$ 
where $$V^{\vee}:=\{x\in V_{\Q},\, \forall y\in V,\, (x.y)\in\Z\}$$ denotes as before the dual lattice of $V$. By Baily and Borel \cite{bailyborel}, the complex analytic quotient $\Gamma_{V}\backslash D_{V}$ can be endowed with a natural  structure of quasi-projective variety called an {\it orthogonal modular variety}. It is the structure that we consider in the whole text. Let $\mathcal{L}$ denote the Hodge bundle on $\Gamma_{V}\backslash D_{V}$ and let $\omega$ be its first Chern class. Recall that $\mathcal{L}$ is an ample line bundle \cite{bailyborel}.
\bigskip

For $\gamma\in V^{\vee}/V$ and $n\in -Q(\gamma)+\Z$ with $n>0$, let $\mathcal{Z}(\gamma,-n)$ denote the associated Heegner divisor in $\Gamma_{V}\backslash D_{V}$ which  parametrizes Hodge structures on $V$ for which there exists a rational Hodge class  $\lambda\in\gamma+V$ with $(\lambda.\lambda)=-2n$ (see Section \ref{heegnerdivisors} for the precise definition).  Let $\{S,\mathbb{V}_{\Z},\mathcal{F}^{\bullet}\mathcal{V},Q)\}$ be given as in Theorem \ref{main}. Since the local system $\mathbb{V}_{\Z}^{\vee}/\mathbb{V}_{\Z}$ is trivial, we have a corresponding holomorphic period map $$\rho: S \rightarrow\Gamma_{V}\backslash D_{V} .$$ This map  is in fact algebraic by Borel \cite{borelmetric}\footnote{In fact, in \cite{borelmetric} the theorem is stated for smooth quotients but see \cite[Remark 4.2]{huybrechts} for how one can reduce to this case.}. The pullback of the Hodge bundle $\mathcal{L}$ along $\rho$ is equal to $\mathcal{F}^{2}\mathcal{V}$. The idea of the proof is to  obtain a global estimate of the cardinality (with multiplicity) of the set $$\{s\in S,\, \exists\, \lambda \in \gamma+\mathbb{V}_{\Z,s},\, (\lambda.\lambda)=-2n\}.$$ To this end, following ideas of Maulik \cite{maulik}, we use linear dependence relations between Heegner divisors to get an upper bound. These relations follow from Borcherds' construction in \cite{borcherdszagier} of a modular form on the Picard group of the orthogonal modular variety $\Gamma_{V}\backslash D_{V}$. Then we extend those relations to a suitable toroidal compactification of $\Gamma_{V}\backslash D_{V}$. It is at this level that we need a restriction on $\gamma$ being in $H^{\bot}$ for a maximal isotropic subgroup $H$ of $V^{\vee}/V$, since for arbitrary $\gamma$,  we don't know how to control the intersection of $S$ with the boundary divisor of the given toroidal compactification. 
\bigskip

To obtain a lower bound, we construct a suitable fibration over every small enough simply connected open subset $\Delta\subset S$. Then following ideas of Green (see \cite[Chap.17]{voisin}), we  obtain  a map to the homogeneous space $A_{0}=\{x\in V_{\R},\, Q(x)=-1\}.$ This map turns out to be, outside a Lebesgue negligible analytic subset, a local diffeomorphism. We use then a result of equidistribution of integral points on $A_0$  proven by Eskin--Oh in a more general context in \cite[Th.1.2]{eskinoh}. The proof of the latter relies on results from ergodic theory, namely the ergodicity of unipotent flows, which is also an important ingredient in the proof of the main result of \cite{clozelullmo}.

\subsection{Outline of the paper} In section 2 we recall the construction of the Borcherds' modular form and its implications on the linear dependence relations between Heegner divisors following ideas of Maulik in \cite{maulik}. We then explain how to extend those relations  to the toroidal  compactification of $\Gamma_{V}\backslash D_{V}$ determined by the perfect cone decomposition following the work of Peterson in  \cite{peterson}. This will allow us, under some mild assumptions, to give global estimates on the growth of the Hodge locus in a curve. We conjecture that these estimates still hold without those assumptions. In section 3 we construct a fibration in spheres over the small open subsets of $S$ which, combined with equidistribution results of Eskin and Oh \cite{eskinoh}, allow to deduce a lower estimate on the cardinality of Hodge locus. In section 4 we explain how one can reduce to the case where the group $V^{\vee}/V$ has no non-trivial isotropic subgroup and then prove the result in this case. The end of the section is devoted to prove corollary \ref{k3}. 

\subsection{Acknowledgements}  I am very grateful to François Charles for introducing me to this subject, for the many discussions we had  and for his enlightening guidance. I wish also to thank Quentin Guignard and Lucia Mocz for their careful reading of an earlier version of this paper. I have benefited from many useful conversations with Yohan Brunebarbe, Gaëtan Chenevier and Étienne Fouvry. Special thanks go to the referee who helped improving the exposition and reducing inaccuracies.
    
This project has received funding from the European Research Council (ERC) under the European Union’s Horizon 2020 research and innovation programme (grant agreement No 715747).
\subsection{Notations} If $\epsilon>0$ $f,g_{\epsilon},h:\N\rightarrow \R$ are  real functions and $g_{\epsilon}$ does not vanish, then: 
\begin{enumerate}
\item $f=O_{\epsilon}(g)$ if there exists an integer $n_\epsilon\in \N$, a positive constant $C_{\epsilon}>0$ such that $$\forall n\geq n_\epsilon,\, |f(n)|\leq C_{\epsilon}|g_{\epsilon}(n)|.$$
\item $f\asymp h$ if and $f=O(h)$ and $h=O(f)$. 
\end{enumerate}
\section{The Weil representation and modular forms}\label{weil1}
\subsection{General setting}\label{generalsetting}
We recall in this section some results about Weil representations and certain vector-valued modular forms associated to quadratic lattices. Our main references are  \cite{borcherds} and \cite{borcherdszagier}.

Let $\mathrm{Mp}_{2}(\R)$ be the metaplectic cover of $\mathrm{SL}_{2}(\R)$: the elements of this group consist of pairs $(M,\phi)$, where $$M=\begin{pmatrix} a&b\\c&d\\ \end{pmatrix}\in \mathrm{SL}_{2}(\R)$$ and $\phi$ is a holomorphic function on the Poincaré upper half-plane $\mathbb{H}$ such  that $\phi(\tau)^2=c\tau +d$, $\tau\in \mathbb{H}$. The group structure is defined by 
$$(M_{1},\phi_1).(M_{2},\phi_{2})=(M_{1}M_{2},\tau\mapsto \phi_{1}(M_{2}.\tau)\phi_{2}(\tau)),$$
for $(M_{1},\phi_1), (M_{2},\phi_{2})\in\mathrm{Mp}_{2}(\R) $, where $M_{2}.\tau$ stands for the usual action of $\mathrm{SL}_{2}(\R)$ on $\mathbb{H}$ given by fractional linear transformations.

The map $\mathrm{Mp}_{2}(\R)\rightarrow \mathrm{SL}_{2}(\R)$  given by $(M,\phi)\mapsto M$ is a double cover of $\mathrm{SL}_{2}(\R)$. Denote by $\mathrm{Mp}_{2}(\Z)$ the inverse image of $\mathrm{SL}_{2}(\Z)$ under this map. 
It is well known (see \cite[P.78]{cours}) that $\mathrm{Mp}_{2}(\Z)$ is generated by the elements: 

$$T=\left(\begin{pmatrix}1&1\\0&1\end{pmatrix},1 \right) \quad \textrm{and}\quad S=\left(\begin{pmatrix}0&-1\\1&0\end{pmatrix},\tau\mapsto\sqrt{\tau} \right).$$
Let $\rho: \mathrm{Mp}_{2}(\Z)\rightarrow \mathrm{GL}(V)$ be a finite-dimensional complex  representation of $\mathrm{Mp}_{2}(\Z)$ that factors through a finite quotient of $\mathrm{Mp}_{2}(\Z)$ and let $k\in \frac{1}{2}\Z$. The group $\mathrm{Mp_{2}}(\Z)$ has a right action on the space of functions $f:\mathbb{H}\rightarrow V$ given by 
\begin{eqnarray}\label{action}
\left(f.(M,\phi)_{k}\right)(\tau)=\phi(\tau)^{-2k}\rho(M,\phi)^{-1}f(M.\tau).
\end{eqnarray}
\bigskip

Fix an eigenbasis $(v_{\gamma})_{\gamma\in I}$ of $V$ with respect to the action of $T$. 
A holomorphic function $f:\mathbb{H}\rightarrow V$ which is invariant under the action of $T$ has a Fourier expansion 

\begin{eqnarray}\label{development}
f(\tau)=\sum_{\gamma\in I}\sum_{n\in \Q}c(\gamma,n)e^{2i\pi n\tau}v_{\gamma}.
\end{eqnarray}
For $(\gamma,n)\in I\times \Q$, the coefficient $c(\gamma,n)$ is non-zero only if  $e^{2i\pi n}$ is the eigenvalue of $T$ acting on $v_{\gamma}$. The function $f$ is said to be holomorphic at infinity if $c(\gamma,n)=0$ for all $n<0$ and $\gamma \in I$.
\begin{definition}
A holomorphic function $f:\mathbb{H}\rightarrow V$ is a \textbf{\textit{modular form of weight $k$ and type $\rho$}}, if it satisfies the following conditions:
\begin{itemize}
\item[(i)] $f$ is invariant under the action (\ref{action}) of $\mathrm{Mp}_{2}(\Z)$.
\item[(ii)] $f$ is holomorphic at infinity.
\end{itemize} 
Moreover, if $c(\gamma,0)=0$ for all $\gamma\in I$ in the formula (\ref{development}), we say that $f$ is a cusp form. 
\end{definition}
Let $M_{k}(\rho)$ denote the $\C$-vector space of modular forms of weight $k$ and type $\rho$, and let $S_{k}(\rho)$ be the subspace of cusp forms. Both $M_{k}(\rho)$ and $S_{k}(\rho)$ are finite-dimensional vector spaces over $\C$ (see \cite[Section 2]{borcherdszagier}). 
\bigskip

Let $(V,Q)$ be an even lattice of signature $(b^+,b^-)$ with  the underlying non-degenerate symmetric bilinear form denoted by  $(\,.\,)$ and such that   $Q(x)=\frac{(x.x)}{2}$ for $x\in V$. Let  $V^{\vee}$ be the dual lattice of $V$. 
We can associate to the quadratic lattice $(V,Q)$ a representation $\rho_{V}$ of the metaplectic group $\mathrm{Mp}_{2}(\mathbb{Z})$ whose underlying vector space is $\mathbb{C}[V^{\vee}/V]$. For this, it is enough to specify the action of $S$ and $T$ on a basis $(v_{\gamma})_{\gamma \in V^{\vee}/V}$ of $\mathbb{C}[V^{\vee}/V]$ as follows :

\begin{align}\label{weil}
\begin{split}
\rho_{V}(T)v_{\gamma}&=e^{2i\pi Q(\gamma)}v_{\gamma},\\
\rho_{V}(S)v_{\gamma}&=\frac{{i}^{\frac{b^{-}-b^{+}}{2}}}{\sqrt{|V^{\vee}/V|}}\sum_{\delta\in V^{\vee}/V}e^{-2i\pi (\gamma,\delta)}v_{\delta},
\end{split}
\end{align}
where $\gamma\in V^{\vee}/V$. We denote by $\rho_{V}^{*}$ the dual representation of $\rho_{V}$. 

\begin{remarque}\label{mcgraw}{\normalfont
By a result of McGraw \cite[Prop. 5.6]{mcgraw}, the complex vector space $\mathrm{M}_{k}(\rho_{V}^{*})$ has a rational structure ${\mathrm{M}_{k}(\rho_{V}^{*})}_{\Q}$ given by modular forms with rational coefficients, and similarly for $S_{k}(\rho_{V}^{*})$. }
\end{remarque}
\bigskip

We present an example of a modular form which will be crucial for our later study. 
\begin{example}\label{eisenstein}
{\normalfont
Assume $V$ has signature $(2,b)$ where $b\geq 3$. There is an Eisenstein series $E_{V}$ in ${\mathrm{M}_{k}(\rho_{V}^{*})}$  whose Fourier expansion is given by (see \cite[Prop.4]{bruinierkuss}):  

	$$E_{V}(\tau)=\sum_{\gamma \in V^{\vee}/V}\sum_{n\in -Q(\gamma)+\Z, n\geq 0}c(\gamma,n)q^n v_{\gamma},$$
where $q=e^{2i\pi\tau}$, $\tau\in\mathbb{H}$, and the coefficients $c(\gamma,n)$ are given by: 
\[
\left \{
\begin{array}{r c l}
c(0,0)&=&2\\
c(\gamma,n)&=&-\frac{2^{2+\frac{b}{2}}.\pi^{1+\frac{b}{2}}.n^{\frac{b}{2}}}{\sqrt{|V^{\vee}/V|}\Gamma(1+\frac{b}{2})}.\prod_{p}\mu_p(\gamma,n,V) \quad \textrm{for all}\,n>0,
\end{array}
\right.
\]
where the product is ranging over all primes $p$. The factors $\mu_{p}(\gamma,n,V)$ are defined as follows. For $\gamma\in V^{\vee}/V$, $n\in -Q(\gamma)+\Z$ such that $n$ is positive and $a$ a positive integer, let $$N(\gamma,n,L,a)=|\{\alpha\in L/aL,\, Q(\alpha+\gamma)+n\equiv 0\pmod a\}|.$$

For a prime $p$, Siegel proves in \cite[Hilfssatz 13]{siegeluber} that for $s$ sufficiently large, the value of $p^{-(1+b)s}N(\gamma,n,L,p^s)$ is independent of $s$ and we define 
$$\mu_p(\gamma,n,V):=\lim_{s\rightarrow \infty}p^{-(1+b)s}N(\gamma,n,V,p^s).$$ 
The infinite product $\prod_{p}\mu_p(\gamma,n,V)$ converges as long as  every factor is different from zero. Since $Q$ is indefinite of rank greater than $5$,  this is equivalent  by Hasse-Minkowski theorem (\cite[p.41]{cours}) to the equation $Q(\alpha)+n=0$ having a solution $\alpha$ in $\gamma+V$ . 
In this situation, we say that $n$ satisfy {\it local congruence conditions} and by \cite[Proposition 2]{browning} (see also \cite[Section 11.5]{iwaniec}), we have 
$$\prod_{p}\mu_p(\gamma,n,V)\asymp 1.$$
Hence the estimate 
$$c(\gamma,n)\asymp n^{\frac{b}{2}} .$$
The coefficients $c(\gamma,n)$ are rational numbers by \cite[Proposition 14]{bruinierkuss}
 so that $E_{V}\in {\mathrm{M}_{k}(\rho_{V}^{*})}_{\Q} $.}
\end{example}

\subsection{Borcherds' modular form}\label{general}

In this section we introduce the Heegner divisors and state a modularity result of their generating series. This will allow us later to control the growth of their intersection with a curve $S$ supporting a variation of Hodge structure. As before, let $(V,Q)$ be an even quadratic lattice and assume henceforth that it has signature $(2,b)$ with $b\geq 3$. Let  $\mathrm{O}(V)$ be the orthogonal group of $V$ and $\Gamma_{V}$ the subgroup of elements acting trivially on $V^{\vee}/V$. We refer to \cite{maulik} and \cite[Chapter 6]{huybrechts} for more details on this section. 
 
\subsubsection{The period domain}\label{period}

Let $D_{V}$ be the {\it period domain} associated to $V$, that is the complex analytic variety   $$D_{V}:=\{w\in \P(V_\C),\, (w,w)=0,\, (w,\overline{w})>0 \}.$$ 
Let $D^{+}_{V}$ be one of the two connected components of $D_{V}$. 
Let $G$ be the connected component of the identity of the real Lie group $\mathrm{O}(V_{\R})$, where $V_{\R}:=V\otimes_{\Z}\R$ is endowed with the real extension of $Q$. The action of discrete subgroup $\Gamma^{+}_{V}:=\Gamma_{V}\cap G$ on $D^{+}_{V}$ is proper and totally discontinuous and the quotient $\Gamma^{+}_{V}\backslash D^{+}_{V}$ has the structure of a quasi-projective variety with orbifold singularities by \cite{bailyborel}. 
\bigskip 

There is another realization of $D_{V}$ as a Grassmanian. Let $\mathrm{Gr}(2,V_{\R})$ be the Grassmanian of planes of $V_{\R}$ and let $\mathrm{Gr}^{+}(2,V_{\R})$ be the open subset of positive definite planes. We have a natural split covering of degree $2$  
\begin{align*}
D_{V}& \longrightarrow \mathrm{Gr}^{+}(2,V_{\R})\\
\omega =X+iY &\mapsto P=<X,Y>,
\end{align*}
where $P$ is the oriented plane generated by $X$ and $Y$. 
The restriction of the map above to $D^{+}_{V}$ is a diffeomorphism. Both of the previous descriptions of $D_{V}^{+}$ will be interchangeably used henceforth.  



\subsubsection{Heegner divisors}\label{heegnerdivisors}
In this section, the main result from \cite{borcherdszagier} is used to produce linear dependence relations between certain special divisors that  will be defined hereafter.

For any vector $v\in V_{\R}$ such that $Q(v)<0$, let $v^{\bot}$  be the set of planes in  $D^{+}_{V}$ orthogonal to $v$. Let $\gamma\in V^{\vee}/V$ and $n\in Q(\gamma)+\Z$ with $n<0$. The union of hyperplanes 
\begin{align*}
\bigcup_{v\in \gamma+V,\, Q(v)=n}v^{\bot}
\end{align*} is locally finite, invariant under the action of $\Gamma^{+}_{V}$, 
and defines an algebraic divisor  on $\Gamma^{+}_{V}\backslash D^{+}_{V}$ given as 
    	
$$\mathcal{Z}(\gamma,n):=\Gamma^{+}_{V}\backslash\left(\bigcup_{v\in \gamma+V,\, Q(v)=n}v^{\bot}\right).$$
In terms of Hodge structures, $\mathcal{Z}(\gamma,n)$ parametrizes Hodge structures on $V$ for which there exists a rational Hodge class $\lambda$ in $\gamma+V$ with $Q(\lambda)=n$.
\bigskip
 
The restriction of the tautological line bundle $\mathcal{O}(-1)$ to $D^{+}_{V}\subset \P(V_{\C})$ admits a natural $\Gamma^{+}_{V}$-equivariant action and defines an algebraic line bundle  $\mathcal{L}:=\Gamma^{+}_{V}\backslash\mathcal{O}(-1)$ on $\Gamma^{+}_{V}\backslash D^{+}_{V}$ called the {\it Hodge bundle}. We define $\mathcal{Z}(0,0)$ to be a divisor whose class is equal to the  dual of the Hodge bundle. 

Finally, we set $\mathcal{Z}(\gamma,n)=0$ if $n>0$ or if $n=0$ and $\gamma\neq 0$. The $\mathcal{Z}(\gamma,n)$ are the {\it Heegner divisors}. They are Cartier divisors on $\Gamma^{+}_{V} \backslash D^{+}_{V}$, and we denote by $[\mathcal{Z}(\gamma,n)]$ their associated class in $\mathrm{Pic}(\Gamma^{+}_{V} \backslash D^{+}_{V})$. 

Consider the formal power series $$\Phi_{V}(q)=\sum_{\underset{n\in -Q(\gamma)+\mathbb{Z}}{\gamma \in V^{\vee}/V}}[\mathcal{Z}(\gamma,-n)]q^n v_{\gamma} \in \mathrm{Pic}(\Gamma^{+}_{V} \backslash D^{+}_{V})[[q^{\frac{1}{2d}}]]\otimes\mathbb{C}[V^{\vee}/V].$$
Here $d$ is the order of $V^{\vee}/V$.

The following result is due to the work of Borcherds (\cite{borcherdszagier}), combined with the refinement of McGraw (see remark \ref{mcgraw}):
\begin{theorem}
$\Phi_{V}(q)\in \mathrm{Pic}(\Gamma^{+}_{V} \backslash D^{+}_{V})\otimes\mathrm{M}_{1+\frac{b}{2}}(\rho_{V}^{*})_\Q $.
\end{theorem}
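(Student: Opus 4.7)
The plan is to deduce the theorem by combining Borcherds' modularity theorem for generating series of Heegner divisors with McGraw's rational structure result recalled in Remark \ref{mcgraw}. The strategy breaks naturally into two steps: first establish the complex statement that $\Phi_V(q)\in \mathrm{Pic}(\Gamma^+_V\backslash D^+_V)\otimes M_{1+b/2}(\rho_V^*)_{\mathbb C}$, then descend to the rational structure.

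For the first step I would invoke Borcherds' singular theta lift: to any weakly holomorphic vector-valued form $f\in M^!_{-b/2}(\rho_V)$ whose principal part has integral Fourier coefficients, Borcherds associates a meromorphic Borcherds product $\Psi(f)$ on $\Gamma^+_V\backslash D^+_V$ whose divisor is an explicit linear combination of Heegner divisors $\mathcal Z(\gamma,-n)$, with coefficients read off from the principal part of $f$. Each such Borcherds product yields a linear relation among the classes $[\mathcal Z(\gamma,-n)]$ in $\mathrm{Pic}(\Gamma^+_V\backslash D^+_V)_{\mathbb Q}$. By a vector-valued version of Serre duality, the space $M_{1+b/2}(\rho_V^*)$ is dual to the quotient of the space of weakly holomorphic forms by the subspace of those with no obstruction, so the family of relations obtained from Borcherds products is equivalent to the statement that the pairing of $\Phi_V$ against any allowed principal part vanishes whenever the corresponding obstruction does. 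This is precisely the modularity of $\Phi_V$, viewed in $\mathrm{Pic}\otimes M_{1+b/2}(\rho_V^*)_{\mathbb C}$.

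For the second step, fix a $\mathbb Q$-basis $(D_i)_{i\in I}$ of $\mathrm{Pic}(\Gamma^+_V\backslash D^+_V)_{\mathbb Q}$ and write
$$\Phi_V(q)=\sum_{i\in I} D_i\otimes f_i(q),$$
where each $f_i$ is a formal $q$-series with values in $\mathbb C[V^\vee/V]$. The coefficients of $f_i$ are the coordinates of the classes $[\mathcal Z(\gamma,-n)]$ in the chosen basis, hence are rational. By the first step each $f_i$ lies in $M_{1+b/2}(\rho_V^*)_{\mathbb C}$; since its Fourier coefficients in the basis $(v_\gamma)$ are rational, McGraw's theorem (Remark \ref{mcgraw}) places $f_i$ in $M_{1+b/2}(\rho_V^*)_{\mathbb Q}$. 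Summing over $i$ yields the desired conclusion.

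The real difficulty is concentrated in step one, namely Borcherds' construction of sufficiently many multiplicative lifts to generate all possible relations among Heegner classes. In this text I would treat that result as a black box quoted from \cite{borcherdszagier}; the work in the present proof is only to record the duality and to check that the rationality refinement of McGraw is compatible with the choice of a rational basis of the Picard group, which is a routine verification.
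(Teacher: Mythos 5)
Your proposal takes essentially the same route as the paper, which simply quotes Borcherds' modularity theorem from \cite{borcherdszagier} for the statement over $\mathbb{C}$ and invokes McGraw's rational structure on $\mathrm{M}_{1+\frac{b}{2}}(\rho_{V}^{*})$ (Remark \ref{mcgraw}) to descend to rational coefficients; your sketch of the Borcherds-product/Serre-duality mechanism and your coordinate-functional argument for rationality are precisely what those two citations encapsulate. One minor correction inside the black box: for a lattice of signature $(2,b)$ the weakly holomorphic inputs to the Borcherds lift have weight $1-\frac{b}{2}$ and type $\rho_{V}$, not $-\frac{b}{2}$, though this does not affect the structure of your argument.
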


We will follow ideas of Maulik in \cite[Section 3]{maulik} with some changes in order to translate the previous theorem in terms of linear dependence relations between the Heegner divisors. This will be achieved by writing $\Phi_V$ as a sum of a multiple of an Eisenstein series and a cusp from, then using standard bounds on the growth of coefficients of cusp forms. 

By \cite[p.27]{bruinier}, for each $\gamma$ in a set of representatives of the quotient of $V^{\vee}/V$ by the involution $x\mapsto -x$,  there exists an Eisenstein series $E_\gamma$ such that the following decomposition holds
\begin{align*}
\mathrm{M}_{1+\frac{b}{2}}(\rho_{V}^{*})_\Q=\bigoplus_{\gamma}\C.E_{\gamma}\oplus \mathrm{S}_{1+\frac{b}{2}}(\rho_{V}^{*})_\Q
\end{align*}
where $E_0=E_V$ is the Eisenstein series from Example \ref{eisenstein}. Since the only $(\gamma,0)$-coefficient of $\Phi_{V}$ which is non-zero is the one corresponding to $\gamma=0$, there exists a finite set $\mathcal{I}$, a family $(\mathcal{Z}(\gamma_i,n_i))_{\in \mathcal{I}}$ of Heegner divisors and a family $(g_i)_{i\in\mathcal{I}}$ of cusp forms such that \begin{align*}\Phi_{V}=\frac{1}{2}[\mathcal{Z}(0,0)]\otimes E_V+\sum_{i\in \mathcal{I}}[\mathcal{Z}(\gamma_i,n_i)]\otimes g_i
\end{align*} 
For $\gamma \in V^{\vee}/V$, $n\in-Q(\gamma)+\Z$, by identifying the $(\gamma,n)$-coefficient in the above expression we get 
\begin{align}\label{picrelation}
\begin{split}
[\mathcal{Z}(\gamma,-n)]=\frac{1}{2}c(\gamma,n)[\mathcal{Z}(0,0)]&+\sum_{i\in\mathcal{I}}g_i(\gamma,n)[\mathcal{Z}(\gamma_i,n_i)].
\end{split}
\end{align}

Notice that all the coefficients in (\ref{picrelation}) are rational numbers.  For a cusp form $f$, the trivial bounds on the order of growth of its coefficients (see \cite[Prop. 1.5.5]{sarnak}) say that 
$$|a_{\gamma,n}(f)|\leq C_{\epsilon,f} n^{\frac{2+b}{4}+\epsilon},$$
for all $\epsilon >0$, some constant $C_{\epsilon,f}>0$, and for all $\gamma \in V^{\vee}/V$ and $n\in -Q(\gamma)+\mathbb{Z}$ with $n\geq 0$. 
 
Hence, we can find a constant $C_{\epsilon}>0$ such that for all $i\in \mathcal{I}$, $n$ and $\gamma$ as before, we have 
$$|g_i(\gamma,n)|\leq C_{\epsilon} n^{\frac{2+b}{4}+\epsilon}.$$  

Taking into account relation (\ref{picrelation}) and the expression in Example \ref{eisenstein}, we get:

\begin{proposition}\label{weak}
For every $\epsilon >0$, $\gamma \in V^{\vee}/V$ and $n\in -Q(\gamma)+\Z$ with $n>0$, the following estimate holds in $\mathrm{Pic}(\Gamma_{V}\backslash D_{V})_{\Q}$
$$[\mathcal{Z}(\gamma,-n)]=-\frac{{(2\pi)^{1+\frac{b}{2}}}n^{\frac{b}{2}}}{\sqrt{|V^{\vee}/V|}\Gamma(1+\frac{b}{2})}\prod_{p}\mu_{p}(\gamma,n,V)[\mathcal{Z}(0,0)]+O_{\epsilon}(n^{\frac{2+b}{4}+\epsilon}).$$
\end{proposition}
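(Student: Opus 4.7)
The proof is now essentially a bookkeeping step: all the substantive ingredients --- Borcherds' modularity of $\Phi_V$, the Eisenstein/cusp decomposition of $\mathrm{M}_{1+b/2}(\rho_V^*)_\Q$ with Eisenstein summand $\tfrac{1}{2}[\mathcal{Z}(0,0)]\otimes E_V$, the closed form for the Fourier coefficients of $E_V$ recalled in Example~\ref{eisenstein}, and the trivial polynomial bound on cusp-form coefficients --- have already been assembled in the preceding paragraphs. The plan is to simply extract and combine them.

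First I would take the $(\gamma,n)$-coefficient of the identity
\begin{equation*}
\Phi_V \;=\; \tfrac{1}{2}[\mathcal{Z}(0,0)]\otimes E_V \;+\; \sum_{i\in\mathcal{I}}[\mathcal{Z}(\gamma_i,n_i)]\otimes g_i,
\end{equation*}
which is exactly relation~(\ref{picrelation}):
\begin{equation*}
[\mathcal{Z}(\gamma,-n)] \;=\; \tfrac{1}{2}c(\gamma,n)\,[\mathcal{Z}(0,0)] \;+\; \sum_{i\in\mathcal{I}} g_i(\gamma,n)\,[\mathcal{Z}(\gamma_i,n_i)].
\end{equation*}
For the leading term I would substitute the explicit value of $c(\gamma,n)$ from Example~\ref{eisenstein}; the numerical factor simplifies via $2^{2+b/2}\pi^{1+b/2}=2\cdot(2\pi)^{1+b/2}$, so that $\tfrac{1}{2}c(\gamma,n)$ equals exactly the announced coefficient $-\frac{(2\pi)^{1+b/2}n^{b/2}}{\sqrt{|V^{\vee}/V|}\,\Gamma(1+b/2)}\prod_{p}\mu_{p}(\gamma,n,V)$.

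It remains to recognize the residual sum as the claimed error term. The key observation is that the index set $\mathcal{I}$ and the classes $[\mathcal{Z}(\gamma_i,n_i)]$ are fixed once and for all, independently of $(\gamma,n)$, while the trivial bound $|g_i(\gamma,n)|\le C_\epsilon\, n^{(2+b)/4+\epsilon}$ recorded just above controls each coefficient. Hence $\sum_{i\in\mathcal{I}} g_i(\gamma,n)[\mathcal{Z}(\gamma_i,n_i)]$ is a $\Q$-linear combination of finitely many fixed Picard classes whose scalars are of size $O_\epsilon(n^{(2+b)/4+\epsilon})$; this is precisely the meaning of the $O_\epsilon$ symbol in $\mathrm{Pic}(\Gamma_V\backslash D_V)_\Q$. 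There is no genuine obstacle in this argument; the only mildly delicate point is to fix the convention that $O_\epsilon$ in a $\Q$-vector space stands for such a bounded combination of a fixed finite family of classes, after which the proposition follows by reading off~(\ref{picrelation}).
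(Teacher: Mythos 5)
Your proposal is correct and follows exactly the paper's argument: take the $(\gamma,n)$-coefficient of the Eisenstein-plus-cusp decomposition giving relation~(\ref{picrelation}), substitute the explicit Eisenstein coefficient $c(\gamma,n)$ from Example~\ref{eisenstein} (the factor $\tfrac12\cdot 2^{2+\frac b2}\pi^{1+\frac b2}=(2\pi)^{1+\frac b2}$ checks out), and absorb the finitely many cusp-form contributions into the $O_\epsilon(n^{\frac{2+b}{4}+\epsilon})$ error via the trivial coefficient bound. Nothing further is needed.
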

The above proposition is a quantitative version of Lemma 3.7 in \cite{maulik}.

\subsection{Extension to a toroidal compactification}

The goal in this section is to extend the estimate in Proposition \ref{weak} to a well chosen toroidal compactification of $\Gamma_{V}\backslash D_{V}$. This will allow us to control their growth in cohomology and the growth of their intersection with any curve as in Theorem \ref{main}. We first start by recalling the construction of the Baily-Borel compactification of $\Gamma_{V}\backslash D_{V}$. For a short summary in the case of orthogonal modular varieties, see \cite{gritsenko} which we follow closely, or \cite[Part III]{borelli} for the general case. 

\subsubsection{Baily-Borel compactification}
There is a  "minimal" compactification of $\Gamma_{V}\backslash D_{V}$ constructed  by Baily and Borel in \cite{bailyborel} and which proceeds by adding  rational boundary components and then showing that the resulting space is a projective algebraic variety.

The rational boundary components correspond precisely to maximal rational parabolic subgroups of $G$, which in turn are the stabilizers of totally isotropic subspaces of $V_{\Q}$. Since $Q$ has signature $(2,b)$, such spaces have dimension $1$ or $2$. Hence, we obtain the following description: 
\begin{align*}
(\Gamma_{V}^{+}\backslash D_{V}^{+})^{BB}=\Gamma_{V}^{+}\backslash D_{V}^{+}\sqcup\bigsqcup_{\Pi}X_\Pi \sqcup\bigsqcup_{\ell} Q_{\ell}.
\end{align*}
where $\ell$ and $\Pi$  run through representatives of the finitely many $\Gamma_{V}^{+}$-orbits of isotropic lines and isotropic planes in $V_{\Q}$. Each $X_{\Pi}$ is a modular curve, and $Q_{\ell}$ is a point. They are also known as {\it $1$-cusps} and {\it $0$-cusps} respectively.


\subsubsection{Extension of the relations between Heegner divisors}\label{technical}

The boundary of the Baily-Borel can be singular and the Zariski closure of the Heegner divisors may not be Cartier. To solve this problem,  we extend  the relation (\ref{picrelation}) to  a well-chosen toroidal compactification of $\Gamma_{V}\backslash D_{V}$. We work with the toroidal compactification considered in \cite[Section 5.2]{peterson} and which is given by the perfect cone decomposition. We denote it by $\overline{\Gamma_{V}\backslash D_{V}}^{tor}$. Above each cusp determined by an isotropic subspace $I$ of $V$, the boundary divisors are determined by the one dimensional rays in the $\textrm{Stab}(I)$-invariant decomposition of the positive cone of $I^{\bot}/I$ and in this situation they lie in its boundary. Hence  above every $1$-cusp $F$ there is only one irreducible Cartier boundary divisor $\Delta_F$ and there are no other boundary divisors. Also the closure $\overline{\mathcal{Z}(\gamma,n)}$  of a Heegner divisor $\mathcal{Z}(\gamma,n)$ is Cartier for all $\gamma\in V^{\vee}/V$ and $n\in Q(\gamma)+\Z$. For more details, see \cite[5.2.4]{peterson}. The rest of the section is devoted to bound the coefficients of the boundary divisors in some particular cases.  We start first by recalling Peterson's results in our context, especially Theorem $5.3.3$ in \cite{peterson}.
\bigskip

Let $I$ be an isotropic primitive plane of $V$, $F$ the associated $1$-cusp. The isomorphism class of the definite lattice $I^{\bot}/I$  depends only on the cusp $F$. We denote it by $K_F$ and let $\Theta_{F}$ be the associated theta function, i.e the function defined by 
 $$\Theta_{F}(\tau)=\sum_{\gamma\in K_{F}^{\vee}/K_F}\sum_{x\in\gamma+K_{F}}q^{-Q(x)}v_{\gamma},\,q=e^{2i\pi\tau}, \, \tau\in \mathbb{H},$$ where $(v_{\gamma})_{\gamma\in K_{F}^{\vee}/K_F}$ is the standard basis of $\C[K_{F}^{\vee}/K_F]$. 
   
Let $I^{\#}=I_{\Q}\cap V^{\vee}$. Following \cite[4.1]{brieskorn}, $I$ is said to be {\it strongly primitive} if $I^{\#}=I$. The cardinality $N_{F}$ of the finite group $H_{I}=I^{\#}/I$ depends only on $F$ and is called the imprimitivity of $F$. Let $H_{I}^{\bot}:=\{x\in L^{\vee}/L,\, \forall y\in H_{I},\,(x,y)=0\}$. 
  
\begin{proposition}\label{brieskorn}
Let $I\subset V$ a primitive isotropic plane. Then 
\begin{itemize}
\item[(i)] $H_{I}^{\bot}/H_{I}\simeq K_{F}^{\vee}/K_{F}$ as quadratic finite modules. 
\item[(ii)] $|V^{\vee}/V|=|K_{F}^{\vee}/K_{F}|.N_{F}^{2}$.
\end{itemize}
\end{proposition}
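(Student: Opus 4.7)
My plan is to prove both statements simultaneously by producing an explicit isomorphism of finite quadratic modules $H_I^\bot/H_I \xrightarrow{\sim} K_F^\vee/K_F$ (which gives (i)) and by reading (ii) off from an index computation built into its construction.

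First, choose a rational isotropic plane $I'\subset V_\Q$ complementary to $I_\Q$, so that $H:=I_\Q\oplus I'_\Q$ is a hyperbolic plane of rank $4$ and $V_\Q=H\oplus W_\Q$ with $W_\Q=H^\bot$. With this decomposition in hand, $I^{\bot_{V_\Q}}=I_\Q\oplus W_\Q$, hence $I^{\bot_V}=V\cap(I_\Q\oplus W_\Q)$, and the projection to $W_\Q$ realises $K_F=I^{\bot_V}/I$ as a rank $b-2$ lattice inside $W_\Q$. In these coordinates any element $x\in V^\vee$ writes uniquely as $x=x_I+x_{I'}+x_W$ with $x_I\in I_\Q$, $x_{I'}\in I'_\Q$, $x_W\in W_\Q$, and the quadratic form reduces to $Q(x)=(x_I,x_{I'})+Q(x_W)$.

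Next I would describe the condition $x\in H_I^\bot$ in these coordinates. Since $I\subset V$, one always has $(x,I)\subset\Z$, which forces $x_{I'}$ to lie in $I^{\#}$ modulo an explicit lattice. Adding the condition $(x,I^\#)\subset\Z$ (imposed because $x$ pairs integrally with all of $H_I$) pins down the $I'$-component of $x$ modulo $I$. After killing this indeterminacy, that is, after quotienting $H_I^\bot$ by $H_I=I^\#/I$, what remains is exactly the $W_\Q$-component $x_W$ of $x$, viewed modulo $V\cap W_\Q$. The key structural step is then to check that (a) $x_W$ defines a class in $K_F^\vee$, and (b) the map $\bar x\mapsto[x_W]$ is well defined on $H_I^\bot/H_I$, is a homomorphism, and preserves the $\Q/\Z$-valued quadratic form $Q\bmod\Z$; the last point is immediate from the decomposition $Q(x)=(x_I,x_{I'})+Q(x_W)$, since the hyperbolic piece is integral under our assumptions. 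Surjectivity and injectivity both come from the fact that the hyperbolic block $H$ contributes nothing to the discriminant, so any element of $K_F^\vee$ lifts to a $V^\vee$-class in $H_I^\bot$ by adjusting its $I'_\Q$-component freely inside $I^\#$, and conversely a class that vanishes on $W_\Q$ is supported on $I_\Q$, i.e.\ lies in $H_I$. This proves (i).

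For (ii), I would run the same decomposition bookkeeping to compute $|V^\vee/V|$ as a product of contributions from the hyperbolic block $H$ (which is where the $N_F^2$ enters) and from the $W_\Q$-block (which contributes $|K_F^\vee/K_F|$). Concretely, the projection $V\to I^\#/I=H_I$ along $V_\Q=H\oplus W_\Q$ together with its dual for $V^\vee$ gives the exact sequence $0\to H_I\to H_I^\bot\to K_F^\vee/K_F\to 0$ from part (i), while the quotient $(V^\vee/V)/H_I^\bot$ is dual to $H_I$ via the perfect pairing on the discriminant group; hence $|V^\vee/V|=|H_I^\bot|\cdot|H_I|=|K_F^\vee/K_F|\cdot|H_I|^2=|K_F^\vee/K_F|\cdot N_F^2$.

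The main obstacle I anticipate is the bookkeeping in the first step: one must carefully justify that the choice of isotropic complement $I'$ does not affect the resulting class in $K_F^\vee/K_F$, and that $H_I$ injects onto the kernel of $\bar x\mapsto[x_W]$. This boils down to showing that two different lifts of $x_W$ to $V^\vee\cap H_I^\bot$ differ by an element of $H_I+V$, which is a direct check using $I^{\#}=I_\Q\cap V^\vee$ and the fact that $I$ is primitive (so $V\cap I_\Q=I$). Once this is established, both assertions follow from the decomposition without further serious input.
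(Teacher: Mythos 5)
The strategy you outline (rational splitting $V_\Q=I_\Q\oplus I'_\Q\oplus W_\Q$, project to the $W$-part, then count via duality) can be made to work, but as written the construction of the map in (i) has a genuine gap: the recipe ``send a class of $H_I^\bot$ to the $W_\Q$-component $x_W$ of an arbitrary representative, viewed modulo $V\cap W_\Q$'' is not well defined. Changing the representative by $v\in V$ changes $x_W$ by $v_W\in\pi_W(V)$, and $\pi_W(V)$ is in general strictly larger than $V\cap W_\Q$ and even than the image of $I^{\bot}$ in $W_\Q$ (i.e.\ than $K_F$). Concretely, take the even lattice with Gram matrix $\left(\begin{smallmatrix}0&2&1\\ 2&0&0\\ 1&0&2\end{smallmatrix}\right)$ in a basis $e,f,w$, with isotropic direction $e$ and $I'_\Q=\Q f$, so $W_\Q=\Q t$ with $t=f-2w$: here $V\cap W_\Q=\Z t$ equals the projection of $e^{\bot}$, while $w\in V$ projects to $-\tfrac12 t$, so the representatives $0$ and $w$ of the trivial class get different images; likewise $\tfrac12 f\in V^\vee$ has $x_W=0$ although its class maps to a nontrivial element of $K^\vee/K$ under the true isomorphism (adding a hyperbolic plane and a definite summand makes $I$ a plane of the right signature, and the phenomenon persists). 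For the same reason your claim (a) that $x_W\in K_F^\vee$ for an arbitrary representative is unjustified (it amounts to $\pi_I(I^{\bot})\subseteq I^{\#}$, essentially what is to be proved), your injectivity claim ``a class vanishing on $W_\Q$ is supported on $I_\Q$'' fails, and the bookkeeping sentences conflate $I_\Q$ with $I'_\Q$: quotienting by $H_I$ modifies the $I$-component, not the $I'$-component, so it does not ``kill the indeterminacy'', and one cannot ``adjust the $I'_\Q$-component inside $I^\#$''.

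The missing idea --- which is the actual content of the lemma of Brieskorn that the paper cites for (i) --- is a normalization of representatives: every class in $H_I^\bot$ admits a representative $x\in V^\vee$ with $x\perp I_\Q$. This is exactly where the hypothesis $(x,I^\#)\subseteq\Z$ is used: since $I$ is primitive, the image of $V\to\mathrm{Hom}(I,\Z)$, $v\mapsto (v,\cdot)|_I$, is precisely the set of functionals integral on $I^\#$ (its $\Z$-dual in $I_\Q$ is $I^\#$), so one may subtract a suitable $v\in V$ from $x$. With such representatives the projection $I^{\bot_{V_\Q}}\to I^{\bot_{V_\Q}}/I_\Q=K_{F,\Q}$ lands in $K_F^\vee$, two admissible representatives differ by an element of $V\cap I^{\bot_{V_\Q}}=I^{\bot}$, so the map to $K_F^\vee/K_F$ is well defined (no auxiliary $I'$ is needed), and the kernel is $H_I$ while surjectivity follows from the nondegenerate pairing between $I_\Q$ and $V/I^{\bot}$. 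Your argument for (ii) is fine once (i) is in place, and it is the same as the paper's: $(V^\vee/V)/H_I^\bot\simeq\mathrm{Hom}(H_I,\Q/\Z)$ gives $|H_I^\bot|=|V^\vee/V|/N_F$, whence $|V^\vee/V|=|K_F^\vee/K_F|\,N_F^2$.
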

\begin{proof}
Assertion $(i)$ follows from Lemma page $77$ in \cite{brieskorn}. For $(ii)$, notice that $H_{I}^{\bot}\simeq \{\ell\in \mathrm{Hom}(V^{\vee}/V,\Q/\Z),\,\ell_{/H_{I}}=0\}$ and that the cardinality of the latter is equal to $\frac{|V^{\vee}/V|}{N_{F}}$. 
\end{proof}
\bigskip

Let $p:\,H_{I}^{\bot}\rightarrow K_{F}^{\vee}/K_{F}$ be the composite of the projection $H_{I}\rightarrow H_{I}^{\bot}/H_{I}$ followed by the isomorphism $(i)$ from the last proposition. By construction, it is a morphism of quadratic finite modules. 
We have an induced map $p^{*}:\, \C[K_{F}^{\vee}/K_{F}]\rightarrow \C[V^{\vee}/V]$ which maps an element $v_{\gamma}$, $\gamma\in K_{F}^{\vee}/K_{F}$, to  
$$p^{*}v_{\gamma}=\sum_{\underset{p(\delta)=\gamma}{\delta\in H_{I}^{\bot}}}v_{\delta}.$$ 
Using $(ii)$ from the previous proposition, it is straightforward that $p^{*}$ commutes with the action of the metaplectic group $\mathrm{Mp}_{2}(\Z)$ given by the Weil representation as in Section \ref{generalsetting} Equation  (\ref{weil}). Hence, for any $k\in \frac{1}{2}\Z$, we have a map $$p^{*}:\, {\mathrm{M}_{k}(\rho_{K_{F}}^{*})}_{\Q}\rightarrow{\mathrm{M}_{k}(\rho_{V}^{*})}_{\Q}.$$ 
  
For $\gamma \in V^{\vee}/V$, $n\in -Q(\gamma)+\Z$, let $$a(\gamma,n,F)=\frac{N_{F}}{24}(E_{2}.p^{*}(\Theta_{F}))(\gamma,n),$$ where $E_{2}(\tau)=1-24\sum_{n\geq 1}\sigma_{1}(n)q^{n}$, $q=e^{2i\pi\tau}$, $\tau\in \mathbb{H}$, is the weight $2$ Eisenstein series.

The following result is an application of Theorem $5.3.3$ in \cite{peterson} to formula (\ref{picrelation})
\begin{proposition}\label{petersonformula}
Let $\gamma\in V^{\vee}/V$, $n\in -Q(\gamma)+\Z$. Then we have the following linear equivalence relations in $\mathrm{Pic}(\overline{\Gamma_V\backslash D_{V})}^{tor}$
\begin{align}\label{gh}
\begin{split}
[\overline{\mathcal{Z}(\gamma,-n)}]&=\frac{c(\gamma,n)}{2}[\overline{\mathcal{Z}(0,0)}]+\sum_{F\in S_{1}}u(\gamma,n,F)\Delta_F\\&+\sum_{i\in\mathcal{I}}g_{i}(\gamma,n) [\overline{\mathcal{Z}(\gamma_i,n_i)}]+ \sum_{i\in\mathcal{I}}\sum_{F\in S_1}g_{i}(\gamma,n)a(\gamma_i,n_i,F)\Delta_{F},
\end{split}
\end{align}  
where $$u(\gamma,n,F)=\frac{c(\gamma,n)}{2}a(0,0,F)-a(\gamma,n,F),$$ and the coefficients $c(\gamma,n)$ are defined in \ref{eisenstein}.
\end{proposition}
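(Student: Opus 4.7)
The plan is to apply Peterson's Theorem~$5.3.3$ from \cite{peterson} directly to the interior linear equivalence (\ref{picrelation}). Peterson's theorem computes, for each Heegner divisor, the exact boundary correction needed when its Zariski closure is taken in the perfect-cone toroidal compactification $\overline{\Gamma_V\backslash D_V}^{tor}$. Above each $1$-cusp $F\in S_1$ the correction is $a(\gamma,n,F)\Delta_F$, with $a(\gamma,n,F)$ the $(\gamma,n)$-coefficient of $\frac{N_F}{24}\,E_2\cdot p^*\Theta_F$. The map $p^*$ is well-defined and intertwines the Weil representations of $K_F$ and $V$ by Proposition~\ref{brieskorn}, so the boundary coefficients assemble cleanly into a $\rho_V^*$-valued modular form.

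I would then carry out the argument in two steps. First, I would reformulate Peterson's theorem as the statement that the corrected generating series
\begin{align*}
\overline{\Phi}_V(q):=\sum_{\gamma,n}\Big([\overline{\mathcal{Z}(\gamma,-n)}]+\sum_{F\in S_1}a(\gamma,n,F)\Delta_F\Big)q^n v_\gamma
\end{align*}
is a modular form of weight $1+\tfrac{b}{2}$ and type $\rho_V^*$ with values in $\mathrm{Pic}(\overline{\Gamma_V\backslash D_V}^{tor})_\Q$. Since the restriction of $\overline{\Phi}_V$ to the interior recovers the Borcherds form $\Phi_V$, the Eisenstein/cusp decomposition of $M_{1+b/2}(\rho_V^*)_\Q$ used to derive (\ref{picrelation}) applies verbatim and gives
\begin{align*}
\overline{\Phi}_V=\tfrac{1}{2}\,\widetilde{\mathcal{Z}(0,0)}\otimes E_V+\sum_{i\in\mathcal{I}}\widetilde{\mathcal{Z}(\gamma_i,n_i)}\otimes g_i,
\end{align*}
where $\widetilde{\mathcal{Z}(\gamma,n)}:=[\overline{\mathcal{Z}(\gamma,n)}]+\sum_F a(\gamma,n,F)\Delta_F$. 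Second, I would extract the $(\gamma,n)$-Fourier coefficient of this equality and isolate $[\overline{\mathcal{Z}(\gamma,-n)}]$ on the left; the net coefficient of each $\Delta_F$ then rearranges to $\frac{c(\gamma,n)}{2}a(0,0,F)-a(\gamma,n,F)=u(\gamma,n,F)$ coming from the $E_V$-term, plus $\sum_i g_i(\gamma,n)\,a(\gamma_i,n_i,F)$ coming from the cusp-form terms. This is exactly (\ref{gh}).

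The main obstacle is justifying the precise form of Peterson's theorem that I am invoking. One must verify that the perfect-cone compactification produces exactly one irreducible Cartier boundary divisor $\Delta_F$ above each $1$-cusp and no boundary divisors above $0$-cusps, so that $\overline{\Phi}_V$ has the stated shape; this follows from \cite[5.2.4]{peterson} upon observing that the relevant one-dimensional rays of the invariant decomposition of the positive cone of $I^\perp/I$ lie on its boundary. One must also track Peterson's normalizations carefully, namely the factor $\frac{N_F}{24}$, the appearance of the quasi-modular Eisenstein series $E_2$, and the transfer $p^*$ built from Brieskorn's identification $H_I^\perp/H_I\simeq K_F^\vee/K_F$; the compatibility of $p^*$ with the $\mathrm{Mp}_2(\Z)$-action is precisely what makes $\overline{\Phi}_V$ a $\rho_V^*$-valued (rather than merely $\rho_{K_F}^*$-valued) modular form. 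Once these compatibilities are in hand, the remainder of the argument is a formal manipulation of Fourier expansions.
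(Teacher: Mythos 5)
Your proposal is correct and follows essentially the same route as the paper: the paper's proof of Proposition \ref{petersonformula} consists precisely in applying Peterson's Theorem 5.3.3 to the interior relation (\ref{picrelation}), read as the modularity of the boundary-corrected generating series in $\mathrm{Pic}(\overline{\Gamma_V\backslash D_V}^{tor})_{\Q}$, followed by the same Eisenstein/cusp decomposition and coefficient extraction. The verification points you single out (a single irreducible Cartier boundary divisor $\Delta_F$ over each $1$-cusp and none over $0$-cusps, the Weil-equivariance of $p^{*}$ via Proposition \ref{brieskorn}, and the normalization $\frac{N_F}{24}E_2\cdot p^{*}\Theta_F$) are exactly the ones the paper records in Section 2.3 before stating the proposition.
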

Taking into account the estimates preceding Proposition \ref{weak}, we get 
\begin{proposition}\label{strong}
For every $\epsilon >0$, $\gamma \in V^{\vee}/V$ and $n\in -Q(\gamma)+\Z$ with $n>0$, we have: 
\begin{align*}
 \begin{split}
 [\mathcal{Z}(\gamma,-n)]&=-\frac{{(2\pi)^{1+\frac{b}{2}}}n^{\frac{b}{2}}}{\sqrt{|V^{\vee}/V|}\Gamma(1+\frac{b}{2})}\prod_{p}\mu_{p}(\gamma,n,V)[\mathcal{Z}(0,0)]\\&+\sum_{F\in S_{1}}u(\gamma,n,F)\Delta_{F}+O_{\epsilon}(n^{\frac{2+b}{4}+\epsilon}),
 \end{split}
 \end{align*} in $\mathrm{Pic}(\overline{\Gamma_V\backslash D_V}^{tor})_{\Q}.$
\end{proposition}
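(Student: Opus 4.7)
The plan is to combine Proposition \ref{petersonformula}, which gives the exact linear equivalence in $\mathrm{Pic}(\overline{\Gamma_V \backslash D_V}^{tor})_{\Q}$, with the explicit Eisenstein coefficient formula of Example \ref{eisenstein} and the trivial bound on the Fourier coefficients of vector-valued cusp forms recalled just before Proposition \ref{weak}. This is the toroidal analogue of the derivation that already yielded Proposition \ref{weak} on $\Gamma_V \backslash D_V$; the only new feature is that the boundary terms $\Delta_F$ must now be tracked explicitly rather than discarded.

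Concretely, I would first substitute
$$\tfrac{c(\gamma,n)}{2} = -\frac{(2\pi)^{1+\frac{b}{2}} n^{\frac{b}{2}}}{\sqrt{|V^{\vee}/V|}\,\Gamma(1+\tfrac{b}{2})}\prod_{p} \mu_p(\gamma,n,V),$$
which is obtained directly from Example \ref{eisenstein}, into the relation of Proposition \ref{petersonformula}. This produces the stated main term in front of $[\overline{\mathcal{Z}(0,0)}]$ and leaves the sum $\sum_{F\in S_1} u(\gamma,n,F)\Delta_F$ unchanged. The remaining contribution is
$$R(\gamma,n) \;=\; \sum_{i\in\mathcal{I}} g_i(\gamma,n)\Bigl([\overline{\mathcal{Z}(\gamma_i,n_i)}] \,+\, \sum_{F\in S_1} a(\gamma_i,n_i,F)\,\Delta_F\Bigr).$$
For each $i \in \mathcal{I}$ the bracketed expression is a fixed class $D_i \in \mathrm{Pic}(\overline{\Gamma_V \backslash D_V}^{tor})_{\Q}$ depending only on $i$, not on $(\gamma,n)$. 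Since $\mathcal{I}$ is finite and each $g_i$ is a cusp form of weight $1+\tfrac{b}{2}$, the trivial bound $|g_i(\gamma,n)| \leq C_\epsilon\, n^{(2+b)/4 + \epsilon}$ applies uniformly in $\gamma$ and $i$. Choosing any norm on the finite-dimensional $\R$-vector space $\mathrm{Pic}(\overline{\Gamma_V \backslash D_V}^{tor})_{\R}$ (finite-dimensional because the toroidal compactification is projective), we conclude that $R(\gamma,n) = O_\epsilon(n^{(2+b)/4 + \epsilon})$, which is precisely the error term appearing in the statement.

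The substantive input is Proposition \ref{petersonformula}: Peterson's analysis of the perfect cone decomposition is what allows the Borcherds identities on the quasi-projective orthogonal modular variety to extend across the boundary in such a controlled way, with the coefficient of each $\Delta_F$ expressible through the auxiliary modular form $E_2 \cdot p^*(\Theta_F)$. Once that identity is in hand, the present proposition is a purely quantitative repackaging, and I expect no genuine obstacle beyond routine bookkeeping. The only arithmetic point worth verifying is the identity $2^{1+\frac{b}{2}}\pi^{1+\frac{b}{2}} = (2\pi)^{1+\frac{b}{2}}$, which reconciles the normalization of $c(\gamma,n)$ in Example \ref{eisenstein} (after division by $2$) with the form of the main coefficient stated here.
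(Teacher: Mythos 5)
Your proposal is correct and follows essentially the same route as the paper: the paper's own proof of this proposition is exactly the one-line observation that Proposition \ref{petersonformula}, combined with the Eisenstein coefficient formula of Example \ref{eisenstein} and the trivial bound $|g_i(\gamma,n)|\leq C_\epsilon n^{\frac{2+b}{4}+\epsilon}$ on the finitely many cusp forms, yields the stated estimate, with the cuspidal contributions (including their boundary corrections $a(\gamma_i,n_i,F)\Delta_F$) absorbed into the error term. Your explicit bookkeeping of the fixed classes $D_i$ and the normalization check $2^{1+\frac{b}{2}}\pi^{1+\frac{b}{2}}=(2\pi)^{1+\frac{b}{2}}$ merely spells out what the paper leaves implicit.
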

\begin{remarque}{\normalfont The term $u(\gamma,n,F)$ can a priori be as large as $c(\gamma,n)$. However, when $F$ is strongly primitive, Lemma \ref{quasiestimate} shows that $c(\gamma,n)$ cancels because of the term $a(\gamma,n,F)$, hence giving a sharper control on the growth of $u(\gamma,n,F)$.}\end{remarque}
 
\subsection{Some consequences}

We turn now to the consequences of the previous proposition on the distribution of Hodge loci in $1$-dimensional variation of Hodge structure. Let $\{\mathbb{V}_{\Z},\mathcal{F}^{\bullet}\mathcal{V},Q\}$ be a simple, non trivial, polarized variation of Hodge structure over a complex quasi-projective curve $S$ such that the local system $\mathbb{V}_{\Z}^{\vee}/ \mathbb{V}_{\Z}$ is trivial. Let $\rho:\, S\rightarrow\Gamma_{V}\backslash D_{V}$  be the corresponding period map. Let $\overline{S}$ be a smooth compactification of $S$ such that the following diagram is commutative  
\begin{center}
\begin{tikzpicture}[scale=1]
\node (s) at (0,0) {$S$};
\node (s1) at (0,-2) {$\overline{S}$} ;
\node (d) at (2,0) {$ \Gamma_{V}\backslash D_{V}$};
\node (d') at (2,-2) {$\overline{\Gamma_{V}\backslash D_{V}}^{tor}$};
\draw[->,>=latex] (s)--(s1);
\draw[->] (s)--node[above] {$\rho$}(d);
\draw[->,>=latex] (d)--(d');
\draw[->] (s1)--node[above] {$\overline{\rho}$}(d');

\end{tikzpicture}
\end{center}

Let $\gamma\in V^{\vee}/V$ and $n\in -Q(\gamma)+\Z$ such that $n>0$. Since the variation is assumed to be simple, we can express the degree of the divisor $\overline{\rho}^{*}\overline{\mathcal{Z}(\gamma,-n)}$ on $\overline{S}$ as follows:

$$\deg_{\overline{S}}(\overline{\rho}^{*}\overline{\mathcal{Z}(\gamma,-n)})=\sum_{s\in \overline{S}} \mathrm{ord}_{s}(\overline{\rho}^{*}\overline{\mathcal{Z}(\gamma,-n)}),$$ 
where $\mathrm{ord}_{s}(\overline{\rho}^{*}\overline{\mathcal{Z}(\gamma,-n)})$ is the multiplicity  of the intersection of $\overline{S}$ with $\overline{\mathcal{Z}(\gamma,-n)}$  at a point $s\in \overline{S}$.

Notice that $\deg_{\overline{S}}(\overline{\rho}^{*}\overline{\mathcal{Z}(0,0)})=-\mu(S)$, where $\mu$ is the finite measure on $S$ given by integration of the first Chern class of $\mathcal{F}^2\mathcal{V}$. By Proposition \ref{strong} we have:
\begin{corollaire}\label{eqn1}
For every $\epsilon >0$, $\gamma \in V^{\vee}/V$ and $n\in -Q(\gamma)+\Z$ with $n>0$, we have: 
\begin{align*}
\begin{split}
\deg_{\overline{S}}(\overline{\rho}^{*}\overline{\mathcal{Z}(\gamma,-n)})=&\frac{(2\pi)^{1+\frac{b}{2}}n^{\frac{b}{2}}}{\sqrt{|V^{\vee}/V|}\Gamma(1+\frac{b}{2})}\prod_{p}\mu_{p}(\gamma,n,V)\mu(S)\\ &+\sum_{F\in S_1}u(\gamma,n,F)\deg_{\overline{S}}(\overline{\rho}^{*}\Delta_F) +O_{\epsilon}(n^{\frac{2+b}{4}+\epsilon}).
\end{split}
\end{align*}
\end{corollaire}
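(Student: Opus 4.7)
The plan is direct: pull back the linear equivalence of Proposition \ref{strong} along $\overline{\rho}^{*}:\mathrm{Pic}(\overline{\Gamma_{V}\backslash D_{V}}^{tor})_{\Q}\to \mathrm{Pic}(\overline{S})_{\Q}$ and then take degrees on the smooth projective curve $\overline{S}$. Both operations are $\Q$-linear, so the identity splits termwise and the statement of the corollary is obtained by matching coefficients with the three corresponding contributions.

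For the main term, I would invoke the identity $\deg_{\overline{S}}(\overline{\rho}^{*}\overline{\mathcal{Z}(0,0)})=-\mu(S)$ stated just before the corollary (which expresses that $\rho^{*}\mathcal{L}=\mathcal{F}^{2}\mathcal{V}$ together with $[\mathcal{Z}(0,0)]$ being the class dual to the Hodge bundle). Multiplying the Eisenstein coefficient $-\tfrac{(2\pi)^{1+b/2}n^{b/2}}{\sqrt{|V^{\vee}/V|}\,\Gamma(1+b/2)}\prod_{p}\mu_{p}(\gamma,n,V)$ appearing in Proposition \ref{strong} by this $-\mu(S)$ yields, after the two minus signs cancel, exactly the leading positive term announced in the corollary.

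For the boundary contribution, the set $S_{1}$ of $1$-cusps of the perfect cone compactification is finite, and each $\deg_{\overline{S}}(\overline{\rho}^{*}\Delta_{F})$ is a fixed rational number depending only on $\overline{\rho}$ and $F$. Pulling back $\sum_{F\in S_{1}}u(\gamma,n,F)\Delta_{F}$ and taking the degree therefore yields exactly $\sum_{F\in S_{1}}u(\gamma,n,F)\deg_{\overline{S}}(\overline{\rho}^{*}\Delta_{F})$, as in the statement.

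It remains to control the error term, and this is the only point that deserves a moment of care. The $O_{\epsilon}$ in Proposition \ref{strong} encodes a finite linear combination of the fixed Heegner classes $\overline{\mathcal{Z}(\gamma_{i},n_{i})}$ (for $i\in\mathcal{I}$) and of the boundary divisors $\Delta_{F}$, each weighted by a cusp form coefficient $g_{i}(\gamma,n)$ bounded by $C_{\epsilon}n^{(2+b)/4+\epsilon}$. Since the index set $\mathcal{I}\times S_{1}$ is finite and the pullback degrees $\deg_{\overline{S}}(\overline{\rho}^{*}\overline{\mathcal{Z}(\gamma_{i},n_{i})})$ and $\deg_{\overline{S}}(\overline{\rho}^{*}\Delta_{F})$ are fixed constants independent of $n$, the error after pullback and taking degrees remains $O_{\epsilon}(n^{(2+b)/4+\epsilon})$, with the implicit constant now absorbing the data of $\overline{\rho}$ and $\overline{S}$. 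No deeper obstacle is expected, since the corollary is essentially the numerical shadow of Proposition \ref{strong} along the algebraic morphism $\overline{\rho}$.
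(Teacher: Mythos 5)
Your proposal is correct and is essentially the paper's own argument: the corollary is obtained by pulling back the relation of Proposition \ref{strong} along $\overline{\rho}$, taking degrees on $\overline{S}$, and using $\deg_{\overline{S}}(\overline{\rho}^{*}\overline{\mathcal{Z}(0,0)})=-\mu(S)$, with the error term preserved because it is a finite combination of fixed classes with coefficients $O_{\epsilon}(n^{(2+b)/4+\epsilon})$. Nothing further is needed.
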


Assume now that $\deg_{\overline{S}}(\overline{\rho}^{*}\Delta_F)=0$  if $F$ corresponds to a totally isotropic plane which is not strongly primitive. The following lemma gives a control on the coefficient $u(\gamma,n,F)$ when $F$ is associated to a strongly primitive totally isotropic plane. 
\begin{lemma}\label{quasiestimate}
Let $\gamma \in V^{\vee}/V$, $I$ an isotropic, strongly primitive plane of $V$, $F$ the associated $1$-cusp and $K_F=I^{\bot}/I$. Then for all $\epsilon >0 $ we have the following estimate: 
\begin{align*}
|u(\gamma,n,F)|\ll n^{\frac{b}{2}-1+\epsilon}
\end{align*}
\end{lemma}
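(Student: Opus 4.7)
The argument exploits the strong primitivity of $F$ to exhibit an explicit cancellation between the leading-order term $c(\gamma,n)/48$ in $u(\gamma,n,F)$ and a corresponding leading-order term in $a(\gamma,n,F)$, after which the residual expression admits the desired bound.

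First, I would unpack the definitions using Proposition~\ref{brieskorn}: strong primitivity forces $N_F = 1$ and makes $p : V^\vee/V \to K_F^\vee/K_F$ an isomorphism of finite quadratic modules; moreover the isotropic plane $I$ admits a hyperbolic complement, so that $V$ splits orthogonally as $K_F \oplus U \oplus U$ with $U$ the hyperbolic plane and $K_F$ a definite lattice of rank $b-2$. Expanding the Cauchy product of $E_2 = 1 - 24\sum_{m\geq 1}\sigma_1(m)q^m$ against $p^*\Theta_F(\tau) = \sum_\delta\sum_\ell r_{K_F}(p(\delta),\ell)q^\ell v_\delta$, where $r_{K_F}(\delta,\ell)$ counts vectors $x \in \delta + K_F$ with $-Q(x) = \ell$, one obtains
$$u(\gamma,n,F) = \frac{c(\gamma,n)}{48} - \frac{r_{K_F}(p(\gamma),n)}{24} + \sum_{m=1}^n \sigma_1(m)\, r_{K_F}(p(\gamma),n-m).$$
Using $r_{K_F}(\delta,\ell) = O(\ell^{b/2 - 2})$ (rank $b-2$) together with $\sigma_1(m) \asymp m$ on average, both $c(\gamma,n)$ and the convolution are of order $n^{b/2}$, so an exact cancellation is plausible.

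The heart of the proof is the arithmetic identity that makes this cancellation rigorous. From the splitting $V = K_F \oplus U^2$, the local densities factor as $\mu_p(\gamma,n,V) = \mu_p(p(\gamma),n,K_F)\,\mu_p(n,U)^2$. An elementary computation of the hyperbolic local densities at each prime (equivalently, an instance of the Siegel--Weil formula for $U^2$) evaluates $\prod_p \mu_p(n,U)^2$ to a Dirichlet product that, upon comparison with the formulas of Example~\ref{eisenstein} for $V$ and for $K_F$, produces the identity
$$\frac{c(\gamma,n)}{48} + \sum_{m=1}^n \sigma_1(m)\, c_{K_F}^{\mathrm{Eis}}(p(\gamma),n-m) = 0,$$
where $c_{K_F}^{\mathrm{Eis}}(\delta,\ell)$ is the Eisenstein coefficient of $\Theta_{K_F}$. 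Equivalently, the signature-$(2,b)$ Eisenstein series $E_V$ is, up to the expected universal constant, the $E_2$-lift of the Eisenstein part of $\Theta_{K_F}$.

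Finally, decomposing $r_{K_F} = c_{K_F}^{\mathrm{Eis}} + g_F$ with $g_F$ the cusp form coefficients of $\Theta_{K_F}$ and substituting into the displayed expression for $u$, the identity above cancels the main terms and leaves
$$u(\gamma,n,F) = -\frac{r_{K_F}(p(\gamma),n)}{24} + \sum_{m=1}^n \sigma_1(m)\, g_F(p(\gamma),n-m).$$
The first term is $O_\epsilon(n^{b/2 - 2 + \epsilon})$ by the classical estimate on the number of lattice points on a sphere in a definite lattice of rank $b-2$. The second term is controlled by combining $\sigma_1(m) \ll m^{1+\epsilon}$ with Deligne-type bounds on $g_F$ (or, in the low-rank range where the relevant space of cusp forms is either trivial or small, by directly invoking this vanishing); refined Rankin--Selberg bounds on the convolution handle any remaining small-$b$ cases. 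The main obstacle is the precise formulation of the arithmetic identity in the middle paragraph, a standard but technical Siegel--Weil calculation for the decomposition $V \simeq K_F \oplus U^2$ that is only available once strong primitivity is assumed.
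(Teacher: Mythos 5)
Your reduction of $u(\gamma,n,F)$ to the expression $\frac{c(\gamma,n)}{48}-\frac{r_{K_F}(p(\gamma),n)}{24}+\sum_{m\le n}\sigma_1(m)r_{K_F}(p(\gamma),n-m)$ is correct (with $N_F=1$), but the exact identity at the heart of your argument,
$$\frac{c(\gamma,n)}{48}+\sum_{m=1}^{n}\sigma_1(m)\,c^{\mathrm{Eis}}_{K_F}(p(\gamma),n-m)=0,$$
is false, and the argument collapses there. The point is that $E_2\cdot p^{*}\Theta_F$ is only \emph{quasi}-modular of depth $1$: in the decomposition $\mathrm{M}^{\leq 1}_{1+\frac{b}{2}}(\rho_V^{*})=\mathrm{M}_{1+\frac{b}{2}}(\rho_V^{*})\oplus D(\mathrm{M}_{\frac{b}{2}-1}(\rho_V^{*}))$ one has $E_2\,p^{*}\Theta_F=\tfrac12 E_V+g+D(\tilde g)$ (strong primitivity is used exactly to pin the Eisenstein component to $\tfrac12E_V$), so that $u(\gamma,n,F)=-\tfrac{1}{24}\bigl(g(\gamma,n)+n\,\tilde g(\gamma,n)\bigr)$. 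The derivative term $n\,\tilde g(\gamma,n)$ is genuinely of size $n^{\frac{b}{2}-1}$; it is the \emph{source} of the exponent $\frac{b}{2}-1$ in the lemma and cannot be cancelled by any Siegel--Weil-type identity. Concretely, take $V=U^{\oplus 2}\oplus E_8(-1)$, so $b=10$, $K_F=E_8(-1)$, $\Theta_F=E_4$ and $g_F=0$. Then Ramanujan's relation $E_2E_4=E_6+3DE_4$ gives $u(n)=-30\,n\,\sigma_3(n)\asymp n^{4}=n^{\frac{b}{2}-1}$, whereas your identity (every term being Eisenstein here) would force $u(n)=-\tfrac{1}{24}r_{E_8}(n)=-10\,\sigma_3(n)=O(n^{3})$; equivalently, the classical evaluation of $\sum_{m<n}\sigma_1(m)\sigma_3(n-m)$ contains the term $-\tfrac{30n-10}{240}\sigma_3(n)$ that your identity omits. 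So the "$E_2$-lift" heuristic only matches the Eisenstein projections; the depth-one component survives and carries the main term.

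Two secondary problems would remain even if one repaired the bookkeeping. First, local densities for an orthogonal sum do not factor at a fixed $n$ as $\mu_p(\gamma,n,V)=\mu_p(p(\gamma),n,K_F)\,\mu_p(n,U)^2$; representation numbers (and densities) of $K_F\oplus U^2$ convolve over decompositions of $n$, which is why no clean product formula of that shape exists. Second, your bound for the cuspidal convolution $\sum_{m\le n}\sigma_1(m)g_F(p(\gamma),n-m)$ is of order $n^{\frac{b}{4}+1+\epsilon}$ (even with Deligne or Rankin--Selberg input), which exceeds the target $n^{\frac{b}{2}-1+\epsilon}$ for all $b<8$, so the "small-$b$" cases are not actually handled. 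The paper's route avoids all of this: decompose $E_2\,p^{*}\Theta_F$ as above, use strong primitivity to identify the Eisenstein part with $\tfrac12E_V$ (killing $c(\gamma,n)$ exactly), and then bound $g(\gamma,n)\ll_\epsilon n^{\frac{2+b}{4}+\epsilon}$ and $n\,\tilde g(\gamma,n)\ll_\epsilon n^{\frac{b}{2}-1+\epsilon}$ by the trivial coefficient bounds for cusp forms and for modular forms of weight $\frac{b}{2}-1$.
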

\begin{proof}
Let $\mathrm{M}^{\leq s}_{k}(\rho_{V}^{*})$ be the vector space of vector-valued quasi-modular from of weight $k$ and depth less than $s$ (see \cite[Definition 1]{imam} and \cite[Section 17.1]{royer} for definitions and properties of quasi-modular forms). Let $D$ be the derivation operator $q\frac{d}{dq}$. Then we have the following structure theorem $$\mathrm{M}^{\leq 1}_{1+\frac{b}{2}}(\rho_{V}^{*})=\mathrm{M}_{1+\frac{b}{2}}(\rho_{V}^{*})\oplus D(\mathrm{M}_{\frac{b}{2}-1}(\rho_{V}^{*})).$$
For a proof, we refer to \cite[Section 17.1]{royer} where it is proven for scalar quasi-modular forms, but the reader may notice that the proof generalizes easily to vector-valued quasi-modular forms.
\bigskip

The product $E_{2}.p^{*}(\Theta_F)$ is an element of $\mathrm{M}^{\leq 1}_{1+\frac{b}{2}}(\rho_{V}^{*})$, hence we can write 
\begin{align}\label{quasi-decomposition}
E_{2}.p^{*}(\Theta_{F})=\sum_{i}\alpha_i E^{i}_{L}+g+D(\tilde{g}),
\end{align}
where $g$ is a cusp form of weight $1+\frac{b}{2}$, $(E_{L}^{i})_{i}$ is a basis of Eisenstein series of $\mathrm{M}_{\frac{b}{2}+1}(\rho_{V}^{*})$ with $E^{0}_{L}=E_{L}$ and $\tilde{g}\in \mathrm{M}_{\frac{b}{2}-1}(\rho_{V}^{*})$. 
By comparing the constant coefficients, we get $\alpha_0=\frac{1}{2}$ and $\alpha_{i}=0$ for $i\neq 0$, since $I$ is strongly primitive. 
Hence for $\gamma\in V^\vee/V$, $n\in -Q(\gamma)+\Z$ with $n\geq 0$, we have 
$$(E_{2}.p^{*} \left(\Theta_F\right))(\gamma,n)= \frac{c(\gamma,n)}{2}+g(\gamma,n)+n\tilde{g}(\gamma,n).$$
Since $\tilde{g}$ is a modular form of weight $\frac{b}{2}-1$, we have $\tilde{g}(\gamma,n)\ll_{\epsilon} n^{\frac{b}{2}-2+\epsilon}$ for all $\epsilon>0$. Also  $g$ is a cusp form and by (see \cite[Prop. 1.5.5]{sarnak}) $|a_{\gamma,n}(f)|\leq C_{\epsilon,f} n^{\frac{2+b}{4}+\epsilon}$. Combining these estimates we get the desired result.
\end{proof}
\begin{remarque}\label{reason1}{\normalfont If $I$ is not strongly primitive, then for $\gamma\notin H_{I}^{\bot}$, we have $u(\gamma,n,F)=c(\gamma,n)$, so the estimate in Lemma \ref{quasiestimate} fails. Even for $\gamma\in H_{I}^{\bot}$, all the Einsenstein series $E_{\delta}$ for $\delta\in H_I$ appear in the decomposition (\ref{quasi-decomposition}) with non-zero coefficients, so again Lemma \ref{quasiestimate} fails. 
}\end{remarque}

In view of the previous lemma, Corollary \ref{eqn1} rewrites  
\begin{corollaire}\label{sharpestimate}
If $\overline{S}$ only meets the boundary of $\overline{\Gamma_{V}\backslash D_{V}}^{tor}$ in divisors $\Delta_{F}$ corresponding to strongly primitive totally isotropic planes, then for every $\epsilon>0$ we have $$\deg_{\overline{S}}(\overline{\rho}^{*}\overline{\mathcal{Z}(\gamma,-n)})=\mu(S)\frac{(2\pi)^{1+\frac{b}{2}}n^{\frac{b}{2}}}{\sqrt{|V^{\vee}/V|}\Gamma(1+\frac{b}{2})}\prod_{p}\mu_{p}(\gamma,n,V)+O_\epsilon(n^{u+\epsilon}),$$
for $\gamma\in V^{\vee}/V$, $n\in -Q(\gamma)+\Z$ with $n>0$ satisfying local congruence conditions of Example \ref{eisenstein} and $u=\max(\frac{b}{2}-1,\frac{2+b}{4})$. If $S$ is projective, then we can choose $u=\frac{2+b}{4}$ 
\end{corollaire}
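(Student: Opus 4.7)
The plan is to derive this corollary as a direct consequence of Corollary \ref{eqn1} combined with Lemma \ref{quasiestimate}, together with the geometric hypothesis on $\overline{S}$. First I would invoke Corollary \ref{eqn1}, which already exhibits $\deg_{\overline{S}}(\overline{\rho}^{*}\overline{\mathcal{Z}(\gamma,-n)})$ as the sum of the claimed main term $\mu(S)\frac{(2\pi)^{1+b/2}n^{b/2}}{\sqrt{|V^{\vee}/V|}\Gamma(1+b/2)}\prod_{p}\mu_{p}(\gamma,n,V)$, a finite boundary sum $\sum_{F\in S_{1}}u(\gamma,n,F)\deg_{\overline{S}}(\overline{\rho}^{*}\Delta_{F})$ indexed by the $1$-cusps of $\overline{\Gamma_{V}\backslash D_{V}}^{tor}$, and a cusp-form remainder of size $O_{\epsilon}(n^{(2+b)/4+\epsilon})$. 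So the task reduces to bounding the boundary sum.

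Second, I would apply the standing hypothesis of the corollary: $\deg_{\overline{S}}(\overline{\rho}^{*}\Delta_{F})=0$ whenever $F$ corresponds to a totally isotropic plane that is not strongly primitive. This kills every term in the boundary sum except those indexed by $1$-cusps attached to strongly primitive planes. For each surviving $F$, Lemma \ref{quasiestimate} gives $|u(\gamma,n,F)|\ll_{\epsilon} n^{b/2-1+\epsilon}$. Since $\overline{\Gamma_{V}\backslash D_{V}}^{tor}$ has only finitely many $1$-cusps and each intersection number $\deg_{\overline{S}}(\overline{\rho}^{*}\Delta_{F})$ is a constant independent of $\gamma$ and $n$, the entire boundary sum is bounded by $O_{\epsilon}(n^{b/2-1+\epsilon})$. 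Combining with the cusp-form remainder yields a total error of $O_{\epsilon}(n^{u+\epsilon})$ with $u=\max(b/2-1,(2+b)/4)$, matching the stated bound.

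Third, for the projective case I would observe that if $S$ is already projective one can simply take $\overline{S}=S$ and $\overline{\rho}=\rho$, and the period map lands in the open locus $\Gamma_{V}\backslash D_{V}$ which is disjoint from every toroidal boundary divisor $\Delta_{F}$. Consequently $\overline{\rho}^{*}\Delta_{F}=0$ for every $F$, so the boundary sum vanishes identically and only the cusp-form error survives, giving the sharper exponent $u=(2+b)/4$.

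Essentially no genuine obstacle remains at this stage: all of the difficult analytic input, namely McGraw's rational structure on vector-valued modular forms, Peterson's Picard group relation in the perfect-cone toroidal compactification, Deligne's bound for cusp-form coefficients, and the quasi-modular decomposition used to prove Lemma \ref{quasiestimate}, has already been carried out. The corollary is the final bookkeeping step that assembles those ingredients into a clean asymptotic for the geometric intersection number on $\overline{S}$; the only care needed is to verify that the finitely many boundary contributions are indeed controlled by the strongly primitive hypothesis and by the Lemma, which is exactly what the two paragraphs above accomplish.
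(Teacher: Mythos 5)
Your proposal is correct and follows exactly the paper's own (essentially one-line) derivation: Corollary \ref{eqn1} gives the main term plus boundary sum plus the $O_{\epsilon}(n^{\frac{2+b}{4}+\epsilon})$ cusp-form error, the hypothesis kills all non-strongly-primitive boundary terms, Lemma \ref{quasiestimate} bounds the surviving coefficients $u(\gamma,n,F)$ by $n^{\frac{b}{2}-1+\epsilon}$ against fixed intersection numbers, and in the projective case $\overline{S}=S$ misses the boundary entirely so only the cusp-form error remains. (Minor quibble: the cusp-form coefficient bound used here is the trivial/Hecke bound of \cite[Prop.~1.5.5]{sarnak}, not Deligne's.)
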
 
\begin{remarque}{\normalfont In the case where the discriminant of $(V,Q)$ is square free, all the primitive isotropic planes are strongly primitive by Proposition \ref{brieskorn}(ii), so the estimate \ref{quasiestimate} holds for all the coefficients $u(\gamma,n,F)$ for $\gamma\in V^{\vee}/V$, $n\in -Q(\gamma)+V$. The condition on the curve $S$ in \ref{sharpestimate} is then automatically satisfied. Notice that here the control on the error term is sharper than the one in Theorem \ref{main}. This is because we don't know how to bound the intersection of $\overline{S}$ and $\overline{\mathcal{Z}(\gamma,-n)}$ at the boundary points, see Remark \ref{reason1}. However we conjecture that $|S\cap \mathcal{Z}(\gamma,-n)|_{mult}$ grows as the main term in the corollary.   
}
\end{remarque}

\section{Equidistribution in orthogonal modular varieties}
The main goal of this section is to prove Proposition \ref{local} which gives a lower estimate on the growth of the Hodge locus. The results in this section are independent from those in section 2.
\subsection{Construction of a local map}\label{local1}


Let $U$ be a connected complex manifold and let $\{\mathbb{V}_{\Z},\mathcal{F}^{\bullet}\mathcal{V},Q\}$ be an integral, polarized variation of Hodge  structure  of weight $2$ over $U$ with $h^{2,0}=1$. Assume that the fiber of $(\mathbb{V}_{\Z},Q)$ at a point $u_0$ (hence at all points of $U$) is isomorphic to a quadratic even lattice $(V,Q)$ of signature $(2,b)$ as in Section \ref{general} and assume also that the local system $\mathbb{V}_{\Z}^{\vee}/\mathbb{V}_{\Z}$ is trivial. It follows that the  monodromy representation factors through $\Gamma_V$, the stable orthogonal group of $(V,Q)$.  Let $\rho:\, U\rightarrow \Gamma_L\backslash D_{V}$ be the corresponding period map. We will construct in  this section a sphere bundle over $U$ that keep track of Hodge classes and a map from the latter to the quadric $A_{0}=\{x\in V_\R,\, Q(x)=-1\}$.  
\bigskip

The line bundle $\mathcal{F}^{2}\mathcal{V}$ is simply the pullback of the Hodge bundle $\mathcal{L}$ via $\rho$. Let $\mathcal{V}_{\R}$ be the real vector bundle whose sheaf of differentiable sections is equal to $\mathbb{V}_{\Z}\otimes_{\Q}\mathcal{C}^{\infty}_{\R}$, where $\mathcal{C}^{\infty}_{\R}$ is the sheaf of $\mathcal{C}^{\infty}$ real-valued functions on $U$. The fiber at a point $u\in U$ of $\mathcal{V}_{\R}$ is isomorphic to $V_{\R}$. This vector bundle contains a sub-vector bundle that we shall note $\mathcal{V}^{1,1}_{\R}$ and whose sheaf of differentiable sections is 
$$\mathcal{F}^{1}\mathcal{V}\otimes \mathcal{C}^{\infty}_{\C}\cap \mathbb{V}_{\Z}\otimes\mathcal{C}^{\infty}_{\R}.$$
Let $\mathcal{V}^{1,1}:=\mathcal{F}^{1}\mathcal{V}/\mathcal{F}^{2}\mathcal{V}$. Then $\mathcal{V}^{1,1}_{\R}$ is the real part of $\mathcal{V}^{1,1}$, i.e the fiber at each point $u$ is equal to $\mathcal{V}_{u}^{1,1}\cap V_{\R}$. 

Assume that $U$ is simply connected. Parallel transport by the Gauss-Manin connection trivializes  the vector bundle $\mathcal{V}_{\R}$, hence it is isomorphic to  $U\times V_{\R}$ and this isomorphism preserves the intersection form. Thus one has the commutative diagram

\begin{center}
\begin{tikzpicture}[scale=1]
\node (k) at (-2,1) {$\mathcal{V}^{1,1}_{\R}$};
\node (D) at (0,-1) {$U$} ;
\node (l) at (2,1) {$U\times V_{\R}$};
\draw[->,>=latex] (k)--(D);
\draw[right hook->,>=latex] (k)--(l);
\draw[->,>=latex] (l)--(D);
\end{tikzpicture}
\end{center}
\bigskip

Projecting forward to $V_{\R}$, we get the parallel transport  map: 
$$\Xi:\, \mathcal{V}^{1,1}_{\R}\rightarrow V_{\R}.$$ 
The locus where this map is not submersive were studied in \cite[17.3.4]{voisin} and goes back to Griffiths and Green. Let us recall the setting and the main result. By Griffiths' transversality, the integrable connection 
$$\nabla:\mathcal{V}\rightarrow\mathcal{V}\otimes \Omega^{1}_{U}$$ induces a $\mathcal{O}_{U}$-linear map:  
$$\overline{\nabla}:\mathcal{F}^{1}\mathcal{V}/\mathcal{F}^{2}\mathcal{V}\rightarrow \mathcal{F}^{0}\mathcal{V}/\mathcal{F}^{1}\mathcal{V}\otimes \Omega^{1}_{U}$$
Let $u\in U$, then taking the fibers at $u$ induce a $\C$-linear map 
$$ \overline{\nabla}_u:\mathcal{V}^{1,1}_u\rightarrow  \mathcal{V}^{0,2}_u\otimes \Omega^{1}_{U,u}$$ 
Then we have the following lemma, due to Green (see Lemma  17.21 from \cite{voisin}). 
\begin{lemma}\label{submersion0}
Let $u\in U$, $\lambda \in \mathcal{V}_{u}^{1,1}$. If the map $$\overline{\nabla}_u(\lambda):T_uU\rightarrow \mathcal{V}^{0,2}_u$$ is surjective then $\Xi$ is submersive at $(u,\lambda)$. 
\end{lemma}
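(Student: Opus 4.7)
The plan is to compute the differential $d\Xi$ at $(u,\lambda)$ directly using the flat trivialization, and identify its image via the Griffiths transversality operator $\overline{\nabla}$.

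First, since $U$ is simply connected, the Gauss--Manin trivialization identifies $\mathcal{V}_\R$ with $U \times V_\R$, so that $\mathcal{V}^{1,1}_\R$ sits as a smooth real sub-bundle inside $U \times V_\R$, and $\Xi$ becomes the restriction of the second projection. Consequently, $d\Xi_{(u,\lambda)}$ is the restriction of the projection $T_uU_\R \oplus V_\R \to V_\R$ to the tangent space of $\mathcal{V}^{1,1}_\R$, and it suffices to identify which pairs $(v,\xi) \in T_uU_\R \oplus V_\R$ are tangent to $\mathcal{V}^{1,1}_\R$ at $(u,\lambda)$.

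Second, I would linearize the defining condition. In the flat trivialization, a curve $(u(t), \lambda(t))$ lies in $\mathcal{V}^{1,1}_\R$ precisely when $\lambda(t) \in V_\R$ and $\pi_{u(t)}(\lambda(t)) = 0$, where $\pi_u \colon V_\C \to V_\C/\mathcal{F}^1_u \cong \mathcal{V}^{0,2}_u$ denotes the smooth family of quotient maps. Differentiating at $t=0$, using that $\lambda$ extends to a flat section in the trivialization and that the $\mathcal{O}_U$-linear operator $\overline{\nabla}$ is by definition obtained by applying $\nabla$ and projecting modulo $\mathcal{F}^1$ (which is well-defined on $\mathcal{F}^1/\mathcal{F}^2$ by Griffiths transversality), yields the linearized constraint
$$\overline{\nabla}_v(\lambda) + \pi_u(\xi) = 0 \quad \text{in } \mathcal{V}^{0,2}_u,$$
with $v = \dot u(0)$ and $\xi = \dot\lambda(0)$. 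The identification of the ``$u$-derivative of $\pi_{u}$ applied to $\lambda$'' with $\overline{\nabla}_v(\lambda)$ is the heart of the argument and the only step that requires care.

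Finally, the image of $d\Xi$ is the set of $\xi \in V_\R$ for which there exists $v \in T_uU_\R$ with $\pi_u(\xi) = -\overline{\nabla}_v(\lambda)$. The restriction $\pi_u|_{V_\R} \colon V_\R \to \mathcal{V}^{0,2}_u$ is surjective because any $\eta \in \mathcal{V}^{0,2}_u$ has the real representative $\eta + \overline\eta$ whose $(0,2)$-projection is $\eta$. Combined with the hypothesis that $v \mapsto \overline{\nabla}_v(\lambda)$ is $\C$-linearly (hence $\R$-linearly, via the natural isomorphism between the holomorphic and the underlying real tangent space) surjective onto $\mathcal{V}^{0,2}_u$, one can match any prescribed value of $\pi_u(\xi)$ and thereby reach every $\xi \in V_\R$. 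Hence $d\Xi$ is surjective at $(u,\lambda)$. This is essentially the argument of Voisin \cite[Lemma 17.21]{voisin}, to which one may appeal directly.
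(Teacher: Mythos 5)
Your argument is correct and is essentially the same as the one the paper relies on: the paper does not reprove this lemma but quotes Green's result (Voisin, Lemma 17.21), whose proof is exactly your computation of $d\Xi$ in the flat trivialization, the identification of the linearized constraint with $\overline{\nabla}_u(\lambda)$, and the surjectivity of the projection $V_{\R}\rightarrow \mathcal{V}^{0,2}_u$ via $\eta+\overline{\eta}$. The only point you leave implicit is that every solution $(v,\xi)$ of the linearized constraint is genuinely tangent to $\mathcal{V}^{1,1}_{\R}$ (you prove the containment of the tangent space in the kernel, then use the reverse inclusion); this is harmless, since the constraint map is already submersive in the $\xi$-direction alone, so its zero locus is smooth with tangent space equal to that kernel, in agreement with the paper's description of $\mathcal{V}^{1,1}_{\R}$ as a smooth sub-bundle.
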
 

Consider the fibration over $U$ defined by  $$\mathcal{S}_{U}=\{(u,\lambda),\, u\in U,\, \lambda \in \mathcal{V}^{1,1}_{u,\R},\, Q(\lambda)=-1 \}\rightarrow U.$$ 
For every $u\in U$, the restriction of the quadratic form $Q$ to $\mathcal{V}^{1,1}_{u,\R}$ is negative definite, and  the fiber $\mathcal{S}_{U,u}$ is thus a $(b-1)$-dimensional sphere. 
By restriction of $\Xi$, we get a map:
$$\phi:\,\mathcal{S}_{U}\rightarrow A_{0},$$
where $A_{0}=\{x\in V_\R,\, Q(x)=-1\}$.
\begin{lemma}\label{submersion}
Let $u\in U$, $\lambda \in \mathcal{V}_{u}^{1,1}$ such that $Q(\lambda)=-1$. If the map $$\overline{\nabla}_u(\lambda):T_uU\rightarrow \mathcal{V}^{0,2}_u$$ is surjective then $\phi$ is submersive at $(u,\lambda)$.
\end{lemma}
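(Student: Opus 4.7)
The plan is to deduce this directly from Lemma \ref{submersion0} by observing that $\phi$ is nothing but the restriction of $\Xi$ to a pair of compatible level sets. Concretely, let $q \colon \mathcal{V}^{1,1}_{\R} \to \R$ be the smooth function $(u,\lambda) \mapsto Q(\lambda)$, so that $\mathcal{S}_U = q^{-1}(-1)$, and recall that $A_0 = Q^{-1}(-1)$ inside $V_{\R}$.

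The first step is to record the key compatibility: since the polarization $Q$ is flat with respect to the Gauss--Manin connection, parallel transport acts by isometries on the fibers of $\mathcal{V}_{\R}$. Consequently, the trivialization map $\Xi$ satisfies $Q \circ \Xi = q$ on all of $\mathcal{V}^{1,1}_{\R}$, so that $\Xi$ restricts to a map $\mathcal{S}_U \to A_0$ which is precisely $\phi$. Differentiating the identity $Q \circ \Xi = q$ at the point $(u,\lambda)$ yields
\[
dQ_{\Xi(u,\lambda)} \circ d\Xi_{(u,\lambda)} \;=\; dq_{(u,\lambda)}.
\]

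Given the hypothesis that $\overline{\nabla}_u(\lambda) \colon T_u U \to \mathcal{V}^{0,2}_u$ is surjective, Lemma \ref{submersion0} guarantees that $d\Xi_{(u,\lambda)} \colon T_{(u,\lambda)} \mathcal{V}^{1,1}_{\R} \to V_{\R}$ is surjective. To conclude that $\phi$ is submersive at $(u,\lambda)$, pick any tangent vector $w \in T_{\Xi(u,\lambda)} A_0 = \ker dQ_{\Xi(u,\lambda)}$ and lift it under $d\Xi$ to some $v \in T_{(u,\lambda)} \mathcal{V}^{1,1}_{\R}$ with $d\Xi(v) = w$. The displayed identity then yields $dq(v) = dQ(w) = 0$, so automatically $v \in T_{(u,\lambda)} \mathcal{S}_U = \ker dq$, and $d\phi(v) = w$. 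Hence $d\phi_{(u,\lambda)}$ surjects onto $T_{\Xi(u,\lambda)} A_0$, as required.

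There is no genuine obstacle in this argument: all the analytic content is carried by Lemma \ref{submersion0}, and what remains is the elementary observation that a submersion compatible with defining scalar functions restricts to a submersion between the corresponding level sets.
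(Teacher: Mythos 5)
Your argument is correct and is essentially the paper's own proof: the paper encodes the same reasoning in a commutative diagram with exact rows (the rows being the level-set sequences for $Q\circ\Xi$ and $Q$), concludes $d_{(u,\lambda)}\Xi$ is surjective from Lemma \ref{submersion0}, and deduces surjectivity of $d_{(u,\lambda)}\phi$ by exactly the lift-and-check chase you write out explicitly. The only difference is presentational: you make explicit the flatness of $Q$ under parallel transport (so $Q\circ\Xi=q$) and the identification of tangent spaces of the level sets with kernels, which the paper leaves implicit in the phrase ``the rows are exact by construction.''
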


\begin{proof}
Let $u$ and $\lambda$ be as in the statement. The following diagram is commutative   
\begin{center}
\begin{tikzpicture}[scale=1]
\node (s) at (-4,0) {$T_{(u,\lambda)}\mathcal{S}_{U}$};
\node (v) at (0,0) {$T_{(u,\lambda)}\mathcal{V}^{1,1}_{\R}$} ;
\node (r) at (4,0) {$\R$};
\node (a) at (-4,-2) {$T_{\lambda}A_0$};
\node (v') at (0,-2) {$T_{\lambda}V_\R$};
\node (r') at (4,-2) {$\R$}; 
\draw[->,>=latex] (s)--(v);
\draw[->,>=latex] (v)--node[above]{$d_{(u,\lambda)}(Q\circ\Xi)$}(r);
\draw[->,>=latex] (s)--node[right]{$d_{(u,\lambda)}\phi$}(a);
\draw[->,>=latex] (a)--(v');
\draw[->,>=latex] (v')--node[above]{$d_{\lambda}Q $}(r');
\draw[->,>=latex] (r)--(r');
\draw[->,>=latex] (v)--node[right]{$d_{(u,\lambda)}\Xi$}(v');
\end{tikzpicture}
\end{center}

The rows  are exact by construction. By Lemma \ref{submersion0}, $d_{(u,\lambda)}\Xi$  is surjective . Hence the map $d_{(u,\lambda)}\phi$  is surjective which proves the lemma. 
\end{proof}

If $\rho(U)$ is not a point, then for  $u\in U$ outside the locus where the differential of $\rho$ is identically zero, there exists $\lambda\in\mathcal{S}_{U,u}$ which satisfies the  condition of \ref{submersion}. Hence, the image $\mathrm{Im}(\phi)$ is open around $\lambda$. In particular,  the set of points of $U$ for which $\mathcal{V}^{1,1}_u$ contain an extra rational Hodge class $x$ with $Q(x)=-1$ is dense (see \cite[Proposition 17.20]{voisin} and \cite[Theorem 1.1]{oguiso} for a proof without the norm condition imposed by $Q$).
\bigskip

Lemma \ref{submersion} shows that on order to study the distribution of the Hodge locus in $U$, one can first study the distribution of points $\lambda$ in $A_{0}$ for which there exists $\gamma\in V^{\vee}/V$ and $n\in -Q(\gamma)+\N$ such that $\sqrt{n}\lambda\in \gamma+V$, since the locus where $\phi$ is not submersive is a proper real analytic subset of $\mathcal{S}_U$. Hence it is negligible from a measure-theoretic perspective. This will be explained in the following section.  
\subsection{Eskin-Oh's equidistribution result}
The study of the distribution of Hodge locus in $U$ amounts via the map $\phi$ constructed above to the study of radial projections of integral points of $V_{\R}$ on $A_{0}$. We will need thus to understand  the  distribution in $A_{0}$  of the set $\{\lambda\in A_0, \, \sqrt{n}\lambda \in \gamma+V,\}$ for $\gamma\in V^{\vee}/V$ and $n\in -Q(\gamma)+V$ with $n>0$. This a is a well studied problem and can be dealt with using Hardy-Littlewood's circle method (see \cite{vaughan}). The results we present here follow  \cite{eskinoh} and \cite{oh} to which we refer for more details. Recall that $G=\mathrm{O}(V_{\R})^{+}$ is the connected component of the identity of the real Lie group  $\mathrm{O}(V_{\R})$.
\bigskip 

Let $\mu_{\infty}$ be the  $G$-invariant measure on $A_0$  defined in the following way : take $W$ an open subset of $V_{\R}$ and let 
$$\mu_{\infty}(W\cap A_0)=\lim_{\epsilon \rightarrow 0}\frac{\mathrm{Leb}\left(\{x\in W,\, |Q(x)+1|<\epsilon\}\right)}{2\epsilon}.$$
Here $\mathrm{Leb}$ is the Lebesgue measure on $V_{\R}$ for which the lattice $V$ is of covolume $1$.  
We can now state the main result of this section which is an application of Theorem 1.2 in \cite{eskinoh} (see also \cite[Section 5]{oh}) and the Siegel mass formula \cite[(1.6)]{eskinoh}. 
\begin{proposition}\label{eskin}
Let $\Omega$ be a compact subset of $A_0$ with zero measure boundary, $\gamma\in V^{\vee}/V$ and $n\in -Q(\gamma)+\Z$ with $n>0$. Then 
$$|\{\lambda\in \gamma+V,\, \frac{1}{\sqrt{n}}\lambda\in \Omega\}|\sim\mu_{\infty}(\Omega).n^{\frac{b}{2}}. \prod_{p}\mu_{p}(\gamma,n,V),$$
as $n\rightarrow +\infty$.
\end{proposition}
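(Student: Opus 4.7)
The plan is to recast the count as a count of integer-shifted lattice points on the affine quadric $X_n = \{x\in V_\R : Q(x)=-n\}$ restricted to a cone over $\Omega$, and then apply the Eskin--Oh equidistribution theorem together with the Siegel mass formula.

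\textbf{Reformulation.} The condition $\tfrac{1}{\sqrt n}\lambda\in \Omega \subset A_0$ already forces $Q(\lambda)=-n$, so the quantity to estimate is the cardinality of $(\gamma+V)\cap X_n \cap \sqrt n\,\Omega$. The quadric $X_n$ is a single orbit of $G = \mathrm{O}(V_\R)^+$, and as a homogeneous space it is diffeomorphic to $\sqrt n\cdot A_0 \cong G/H$, where $H$ is the stabilizer of a fixed $v_0\in A_0$. Since $v_0$ is a negative vector in a lattice of signature $(2,b)$, the stabilizer $H$ is isomorphic to $\mathrm{O}(2,b-1)$, which is semisimple with no compact factors under our hypothesis $b\geq 3$. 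This places the problem in the framework of equidistribution of integer points on affine symmetric varieties.

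\textbf{Applying the main tools.} Theorem 1.2 of \cite{eskinoh} then asserts that, for compact $\Omega\subset A_0$ with $\mu_\infty$-negligible boundary and for $n$ meeting the local solubility conditions of Example \ref{eisenstein}, the count is asymptotic to $\mu_\infty(\Omega)$ times the total arithmetic mass of representations of $-n$ in the coset $\gamma+V$. The Siegel mass formula, in the adelic form recorded as formula (1.6) of \cite{eskinoh}, then evaluates this mass as $n^{b/2}\cdot \prod_p \mu_p(\gamma,n,V)$. The scaling $n^{b/2}$ is geometric: the Leray measure on $X_n\simeq \sqrt n\cdot A_0$ transforms under the dilation by $\sqrt n$ by exactly $n^{b/2}$ relative to a fixed invariant measure on $A_0$. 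The non-archimedean factors match the local densities of Example \ref{eisenstein} by construction. Combining these two ingredients yields the claimed asymptotic.

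The step I expect to require the most care is verifying that the hypotheses of Eskin--Oh's theorem hold uniformly in the coset class $\gamma$ as $n$ varies. The dynamical input---that $H$ is a symmetric subgroup whose unipotent radical acts ergodically on the relevant quotient of $G$ by an arithmetic lattice---is supplied by the theorems of Ratner, Dani--Margulis and Mozes--Shah on unipotent flows. The coset refinement is handled by the standard device of replacing $V$ by a sublattice of $V^\vee$ that contains a representative of $\gamma$, applying the equidistribution statement there, and unwinding the local densities; this bookkeeping is already embedded in the Eskin--Oh formulation, which allows arbitrary congruence conditions on the lattice points to be counted.
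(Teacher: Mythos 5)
Your overall strategy coincides with the paper's: rewrite the count as the number of points of $\gamma+V$ on the quadric $\{Q=-n\}$ lying in the cone over $\Omega$, and invoke Eskin--Oh's Theorem 1.2, in the form used in their Theorem 6.1, together with the Siegel mass formula (1.6) of \cite{eskinoh} to identify the constant $n^{\frac{b}{2}}\prod_p\mu_p(\gamma,n,V)$. But there is a genuine gap: Eskin--Oh's theorem carries a non-focusing hypothesis (their condition (1.3)), namely that the rescaled solution sets do not stabilize --- for each $n_0$ there should be only finitely many $n$ with $\tfrac{1}{\sqrt{n}}(\gamma+V)\cap A_0=\tfrac{1}{\sqrt{n_0}}(\gamma+V)\cap A_0$. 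This is precisely why Eskin--Oh restrict to fundamental discriminants in their Theorem 6.1, and since the proposition here is asserted along \emph{all} represented $n$, it is the one hypothesis that must actually be verified; it does not come for free. Your proposal never identifies it: the points you flag as delicate (uniformity in the coset class $\gamma$, the Ratner/Dani--Margulis/Mozes--Shah input, congruence refinements) are either internal to Eskin--Oh's proof or routine bookkeeping, whereas the stabilization issue is a hypothesis about the specific sequence of sets being counted and cannot be absorbed into the citation.

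The paper closes this point by a short geometric argument that leans on the Borcherds--Peterson material of Section 2: if the two rescaled sets coincide, then the corresponding Heegner divisors coincide, $\mathcal{Z}(\gamma,-n)=\mathcal{Z}(\gamma,-n_0)$, and the degree relations of Corollary \ref{eqn1} (whose main term grows like $n^{\frac{b}{2}}$) show this can happen for only finitely many $n$. Any argument showing that $\mathcal{Z}(\gamma,-n)$, or equivalently the projected sets $\tfrac{1}{\sqrt n}(\gamma+V)\cap A_0$, are eventually pairwise distinct would serve, but some such argument is required to legitimately apply Eskin--Oh along the full sequence. Aside from this omission, your setup (stabilizer isomorphic to $\mathrm{O}(2,b-1)$, the $n^{\frac{b}{2}}$ scaling of the invariant measure, matching of the local densities with those of Example \ref{eisenstein}) is consistent with the intended application.
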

\begin{proof}
To see how Theorem 1.2 from \cite{eskinoh} can be applied to our situation, we refer to the proof of Theorem 6.1 in {\it loc. cit.}. The only difference is that here we don't restrict to fundamental discriminants so we need to check that condition (1.3) in \cite{eskinoh} holds. In other words, we need to know that for each $n_{0}$, there is only finitely many $n$ such that 
\begin{align} \label{stable}
\frac{1}{\sqrt{n}}(\gamma+V)\cap A_0=\frac{1}{\sqrt{n_0}}(\gamma+V)\cap A_0.
\end{align} 
\bigskip

For a given $n$, notice that if (\ref{stable}) holds, then  $$\mathcal{Z}(\gamma,-n)=\mathcal{Z}(\gamma,-n_0),$$
so by Corollary \ref{eqn1} this can be true only for finitely many $n$. 
\end{proof}
\bigskip

The $G$-invariant measure  $\mu_{\infty}$ can be recovered as integration of a $G$-invariant volume form on $A_{0}$. Indeed, the group $G$ acts transitively on $A_0$ and the choice of an element $\xi$ in $A_0$ determines a surjective map 
\begin{align}\label{action1}
\begin{split}
\pi_{\xi}:\,G&\longrightarrow A_0 \\
g&\mapsto  g.\xi.
\end{split}
\end{align}
Let $H$ be the stabilizer of $\xi$. The induced map  $G/H\rightarrow A_0$ is a diffeomorphism giving $A_0$ the structure of a symmetric space. 
Let $\mathfrak{g}_{0}$ and $\mathfrak{h}_{0}$ be the Lie algebras of $G$ and $H$ respectively. Then $\mathfrak{g}_{0}/\mathfrak{h}_{0}$ is isomorphic to the tangent space of $A_0$ at $\xi$ via the differential of $\pi_{\xi}$ at the identity of $G$. The space of $G$-invariant volume forms on $A_{0}$ is then identified with $\bigwedge^{b+1}(\mathfrak{g}_{0}/\mathfrak{h}_{0})^{\vee}$.  
\bigskip

Let $(e_{1},e_{2},\xi_{1},\dots,\xi_{b})$ be an orthogonal basis of $V_{\R}$  such that for $i=1,2$ and $j=1,\dots,b$ we have $Q(e_{i})=-Q(\xi_j)=1$. Let $\omega_{A_0}$ be the unique $G$-invariant volume form on $A_0$ such that   
\begin{align}\label{relation1}
\omega_{A_0,\xi_{1}}=de_1\wedge de_2\wedge d\xi_{2}\wedge\dots\wedge d\xi_{b}
\end{align}
in $\bigwedge^{b+1}(T_{\xi_1}A_0)^{\vee}$. Let $\mu_{A_{0}}$ be the $G$-invariant measure on $A_{0}$ given by integration of $\omega_{A_{0}}$. 
We have then the following proposition: 
\begin{proposition}\label{secondmesure}
For every open subset $W$ of $A_0$, we have 
$$\mu_{\infty}(W)=\frac{2^{\frac{b}{2}}}{\sqrt{|V^{\vee}/V|}}\mu_{A_0}(W)$$
\end{proposition}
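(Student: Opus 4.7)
The plan is to exploit $G$-invariance to reduce the identity to a single pointwise computation at $\xi_1$. First I would check that $\mu_\infty$ is itself $G$-invariant: the tube $\{x\in W:|Q(x)+1|<\epsilon\}$ is preserved by $G=\mathrm{O}(V_\R)^+$ because $G$ preserves $Q$, and Lebesgue measure on $V_\R$ is preserved by $G$ since elements of the orthogonal group have determinant $\pm 1$. Since $G$ acts transitively on $A_0$, both $\mu_\infty$ and $\mu_{A_0}$ are scalar multiples of a common $G$-invariant measure, so it suffices to compare their densities at the single point $\xi_1$.

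Next I would set up explicit coordinates adapted to the orthogonal basis $(e_1,e_2,\xi_1,\xi_2,\dots,\xi_b)$: write $x = x_1 e_1 + x_2 e_2 + x_3 \xi_1 + \dots + x_{b+2}\xi_b$, so that $Q(x)=x_1^2+x_2^2-x_3^2-\dots-x_{b+2}^2$. Near $\xi_1 = (0,0,1,0,\dots,0)$ the quadric $A_0$ is parametrized by $(x_1,x_2,x_4,\dots,x_{b+2})$ with $x_3 = \sqrt{1+x_1^2+x_2^2-x_4^2-\dots-x_{b+2}^2}$. In these coordinates the tangent space $T_{\xi_1}A_0$ is $\mathrm{span}(e_1,e_2,\xi_2,\dots,\xi_b)$, and the defining identity \eqref{relation1} becomes $\omega_{A_0,\xi_1} = dx_1\wedge dx_2\wedge dx_4\wedge\dots\wedge dx_{b+2}$.

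The crucial normalization step is relating the Lebesgue measure (for which $V$ has covolume $1$) to the coordinate volume form $dx_1\wedge\dots\wedge dx_{b+2}$ in this orthogonal basis. The Gram matrix of $(e_1,e_2,\xi_1,\dots,\xi_b)$ is diagonal with absolute determinant $2^{b+2}$. Comparing this with any $\Z$-basis of $V$, whose Gram matrix has absolute determinant $|V^\vee/V|$, a standard determinant computation gives $d\mathrm{Leb}=\bigl(2^{(b+2)/2}/\sqrt{|V^\vee/V|}\bigr)\,dx_1\wedge\dots\wedge dx_{b+2}$. This is where the discriminant factor in the statement enters.

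Finally I would compute $\mu_\infty$ via the coarea formula: change variables from $x_3$ to $u=Q(x)+1$, with $|\partial u/\partial x_3|=2|x_3|$. The Lebesgue volume of $\{|u|<\epsilon\}\cap W$ is then approximately $2\epsilon\cdot\frac{2^{(b+2)/2}}{\sqrt{|V^\vee/V|}}\int_{W\cap A_0}\frac{dx_1\,dx_2\,dx_4\cdots dx_{b+2}}{2|x_3|}$, and dividing by $2\epsilon$ and letting $\epsilon\to0$ yields a formula for $\mu_\infty$. Specializing at $\xi_1$, where $x_3=1$, this gives $\mu_\infty = \frac{2^{(b+2)/2}}{2\sqrt{|V^\vee/V|}}\,\mu_{A_0} = \frac{2^{b/2}}{\sqrt{|V^\vee/V|}}\,\mu_{A_0}$ at that point, and the conclusion follows by $G$-invariance.

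The only real subtlety, and the step I would double-check carefully, is the normalization constant $2^{(b+2)/2}/\sqrt{|V^\vee/V|}$ converting between the Lebesgue measure of $V_\R$ and the form $dx_1\wedge\cdots\wedge dx_{b+2}$ in a non-unimodular orthogonal basis; the factor of $\tfrac{1}{2}$ coming from the Jacobian $|\partial u/\partial x_3|=2|x_3|$ at $x_3=1$ is the other place where an arithmetic slip would be easy to make.
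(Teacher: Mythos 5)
Your proposal is correct and follows essentially the same route as the paper: a local computation at $\xi_1$ in the chart adapted to the orthogonal basis, the change of variables in the transverse direction with Jacobian $2|x_3|$, and the covolume normalization $d\mathrm{Leb}=\bigl(2^{(b+2)/2}/\sqrt{|V^{\vee}/V|}\bigr)\,dx_1\wedge\cdots\wedge dx_{b+2}$, with $G$-invariance used to globalize. The paper phrases the transverse integration via an explicit tubular neighbourhood rather than your density-at-a-point comparison, but the constants and the structure of the argument are identical.
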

\begin{proof}
It is enough to prove the equality for $W$ open subset of $A_{0}$ containing $\xi_{1}$. There exists an open subset $U^{b+1}$ in $\R^{b+1}$ containing $0$ such that the map  
\begin{align*}
U^{b+1}&\rightarrow A_{0}\\
(x_{1},x_{2},y_{2},\dots,y_{b})&\mapsto (x_{1},x_{2},\sqrt{x_{1}^2+x_{2}^2-y_{2}^{2}-\dots-y_{b}^{2}+1},y_{2},\dots,y_{b}).
\end{align*}
is a local chart around $\xi_{1}$. Let $W$ be its image.  For $\epsilon >0$, the image of the map 

\begin{align*}
U^{b+1}\times ]-\epsilon,\epsilon[&\rightarrow A_{0}\\
(x_{1},x_{2},y_{2},\dots,y_{b},r)&\mapsto (x_{1},x_{2},\sqrt{x_{1}^2+x_{2}^2-y_{2}^{2}-\dots-y_{b}^{2}+1+r},y_{2},\dots,y_{b}).
\end{align*}
defines a tubular neighbourhood $W_\epsilon$ of $W$ in $\R^{b+2}$ and one can check  that  
$$\lim_{\epsilon\rightarrow 0}\frac{1}{2\epsilon}\int_{W_\epsilon}\omega=\frac{1}{2}\mu_{A_{0}}(W), $$
where $\omega=de_{1}\wedge de_{2}\wedge d\xi_{1}\wedge \dots\wedge d\xi_{b}$ and 
$A_{\epsilon}=\{x\in V_\R,\, |Q(x)+1|<\epsilon\}.$ 
By change of variable, we have 
\begin{align*}
\mathrm{Leb}(\{ x\in W_\epsilon, |Q(x)+1|<\epsilon\})= \frac{2^{1+\frac{b}{2}}}{\sqrt{|V^{\vee}/V|}}\int_{W_\epsilon}\omega.
\end{align*}

Hence 
\begin{align*}
\mu_{\infty}(W)&=\lim_{\epsilon\rightarrow 0}\frac{\mathrm{Leb}(\{ x\in W, |Q(x)+1|<\epsilon\})}{2\epsilon}\\
&=\frac{2^{\frac{b}{2}}}{\sqrt{|V^{\vee}/V|}}\mu_{A_{0}}(W)
\end{align*}
which proves the lemma.
\end{proof}

\begin{corollaire}\label{eskincor}
Let $\Omega$ be a compact subset of $A_0$ with zero measure boundary, $\gamma\in V^{\vee}/V$ and $n\in -Q(\gamma)+\Z$ with $n>0$. Then
$$|\{\lambda\in \gamma+V,\, \frac{1}{\sqrt{n}}\lambda\in \Omega\}|\sim \mu_{A_0}(\Omega).\frac{2^{\frac{b}{2}}}{\sqrt{|V^{\vee}/V|}}.n^{\frac{b}{2}}. \prod_{p}\mu_{p}(\gamma,n,V),$$
as $n\rightarrow +\infty$.
\end{corollaire}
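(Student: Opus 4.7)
The plan is immediate: Corollary \ref{eskincor} is just the combination of Propositions \ref{eskin} and \ref{secondmesure}. Proposition \ref{eskin} (Eskin--Oh plus the Siegel mass formula, applied along the lines of the proof of Theorem 6.1 of \cite{eskinoh}) already gives the asymptotic
$$|\{\lambda\in \gamma+V,\ \tfrac{1}{\sqrt{n}}\lambda\in\Omega\}| \sim \mu_\infty(\Omega)\cdot n^{b/2}\cdot\prod_p \mu_p(\gamma,n,V),$$
with the ``thickening'' measure $\mu_\infty$ on $A_0$ obtained by slicing Lebesgue measure on $V_{\R}$ normal to the level set $\{Q=-1\}$. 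Proposition \ref{secondmesure} then identifies $\mu_\infty$ with the intrinsic $G$-invariant volume measure $\mu_{A_0}$ (coming from the form $\omega_{A_0}$ of (\ref{relation1})) up to the explicit constant $\frac{2^{b/2}}{\sqrt{|V^\vee/V|}}$. Substituting the identity $\mu_\infty=\frac{2^{b/2}}{\sqrt{|V^\vee/V|}}\mu_{A_0}$ into the asymptotic above gives exactly the formula in the statement.

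The only minor matter is that Proposition \ref{secondmesure} is stated for open subsets $W\subset A_0$, whereas here $\Omega$ is compact. This is handled by the zero-measure boundary hypothesis: both $\mu_\infty$ and $\mu_{A_0}$ are locally finite Borel (hence Radon) measures on $A_0$, and the hypothesis $\mu_{A_0}(\partial\Omega)=0$ combined with the proportionality on open sets forces $\mu_\infty(\partial\Omega)=0$ as well. Therefore
$$\mu_\infty(\Omega)=\mu_\infty(\mathrm{int}\,\Omega)=\frac{2^{b/2}}{\sqrt{|V^\vee/V|}}\,\mu_{A_0}(\mathrm{int}\,\Omega)=\frac{2^{b/2}}{\sqrt{|V^\vee/V|}}\,\mu_{A_0}(\Omega),$$
and the formula follows.

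There is no real obstacle: the substantive mathematical content lies entirely in the two preceding propositions, and the corollary is a one-line formal consequence plus the trivial regularity check above to reconcile the slight discrepancy in hypotheses (open versus compact with negligible boundary).
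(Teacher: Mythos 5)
Your proposal is correct and is exactly the route the paper takes (implicitly): the corollary is the immediate combination of Proposition \ref{eskin} with the identification of measures in Proposition \ref{secondmesure}, which is why the paper gives it no separate proof. Your extra remark reconciling open sets with compact sets of negligible boundary is a harmless and valid regularity check.
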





\subsection{Quantitative study of the Hodge locus}\label{quantitative}
The goal of this section is to put together results from the previous sections in order to prove Proposition \ref{local} which gives a lower bound on the cardinality of the Hodge locus. Let $\{\mathbb{V}_{\Z},\mathcal{F}^{\bullet}\mathcal{V},Q\}$ be a non-trivial, polarized, simple variation of Hodge structure over a complex quasi-projective curve $S$ and let $\rho: S\rightarrow \Gamma_{V}^{+}\backslash D^{+}_{V}$ be the associated period map. 
\bigskip
 
Recall that the Chern class $\omega$ of the Hodge bundle $\mathcal{F}^{2}\mathcal{V}$ defines a volume form on $S$ . For any open subset $\Delta\subset S$, we note $\mu(\Delta)=\int_{\Delta}\omega$. Let $\Delta$ be an open simply connected subset of $S$. The restriction of  $\rho$  to $\Delta$ lifts to $D_{V}^{+}$. Let $0\in\Delta$ be a point in $\Delta$ and $P_{0}$ the positive definite plane associated to $\rho(0)$. Then $P_{0}$ defines a maximal compact subroup $K:=\mathrm{SO}(P_0)\times\mathrm{SO}(P_0^{\bot})$ of  $G$ and a diffeomorphism 
\begin{align*}
\pi:\,G/K&\rightarrow D^{+}_{V}\\
g&\mapsto g.P_{0}
\end{align*}  
We constructed in the previous paragraph a map $$\phi:\,\mathcal{S}_{\Delta}\rightarrow A_{0}$$ where $\mathcal{S}_{\Delta}$ is a sphere bundle over $\Delta$ that fits into the following commutative diagram
\begin{center}
\begin{tikzpicture}[scale=1]
\node (k) at (-10,1) {$\mathcal{S}_{\Delta}$};
\node (D) at (-8,-1) {$\Delta$} ;
\node (l) at (-6,1) {$\Delta\times A_0$};
\node (a) at (-4,1) {$A_0$};
\draw[->,>=latex] (k)--(D);
\draw[right hook->,>=latex] (k)--(l);
\draw[->,>=latex] (l)--(D);
\draw[->,>=latex] (l)--(a);
\draw[->,>=latex] (k) to [bend left=20] node[above]{$\phi$}  (a);
\end{tikzpicture}
\end{center}

For any $U\subset S$, $\gamma\in V^{\vee}/V$ and $n\in -Q(\gamma)+\Z$ with $n>0$, let $$|U\cap \mathcal{Z}(\gamma,-n)|_{mult}=\sum_{s\in U\cap \mathcal{Z}(\gamma,-n)}m(s,\gamma,n),$$
where $m(s,\gamma,n)=|\{\lambda\in \mathcal{S}_{\Delta,s},\,\sqrt{n}\lambda\in \gamma+V\}|.$
\begin{lemma}\label{int}
Let $s\in S$, $\gamma\in V^{\vee}/V$ and $n\in -Q(\gamma)+\Z$ such that $n>0$. Then  
$$m(s,\gamma,n)\leq \mathrm{ord}_{s}(\rho^{*}\mathcal{Z}(\gamma,n)) .$$
\end{lemma}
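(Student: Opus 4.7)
The plan is to unpack both sides of the inequality and identify them with data attached to the local branches of the Heegner divisor around $\rho(s)$. First I would fix a simply connected neighborhood $\Delta$ of $s$ in $S$ and a lift $\tilde{\rho}:\Delta\to D_V^+$ of $\rho$. An element $\lambda\in \mathcal{S}_{\Delta,s}$ with $\sqrt{n}\lambda\in\gamma+V$ is nothing but a vector $v:=\sqrt{n}\lambda\in(\gamma+V)\cap\mathcal{V}^{1,1}_{s,\R}$ satisfying $Q(v)=-n$; since $v$ lies in $V^{\vee}$ and in the real $(1,1)$-part, it is an integral Hodge class in $\mathbb{V}^{\vee}_{\Z,s}$ with $Q(v)=-n$ and residue class $\gamma$. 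Under the identification $\mathbb{V}^{\vee}_{\Z,s}\cong V^{\vee}$ coming from parallel transport, the condition that $v$ is a $(1,1)$-class is equivalent to $\tilde{\rho}(s)\in v^{\bot}\subset D_V^+$. Hence the assignment $\lambda\mapsto v$ realizes the $m(s,\gamma,n)$ elements counted on the left as exactly the finite set of $v\in\gamma+V$ with $Q(v)=-n$ for which the corresponding hyperplane $v^{\bot}$ in $D_V^+$ passes through $\tilde{\rho}(s)$.

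Next I would look at the Heegner divisor near $\rho(s)$. By definition,
\[
\mathcal{Z}(\gamma,-n)=\Gamma_V^+\backslash\Bigl(\bigcup_{w\in\gamma+V,\ Q(w)=-n}w^{\bot}\Bigr),
\]
and the family is locally finite; so in a sufficiently small $\Gamma_V^+$-equivariant neighborhood of $\tilde{\rho}(s)$, the divisor $\mathcal{Z}(\gamma,-n)$ is cut out, branch by branch, by the finite list of $v^{\bot}$'s that pass through $\tilde{\rho}(s)$, namely exactly the $v$'s produced in the previous step. Therefore
\[
\mathrm{ord}_s\bigl(\rho^{*}\mathcal{Z}(\gamma,-n)\bigr)=\sum_{v}\mathrm{ord}_s\bigl(\tilde{\rho}^{*}[v^{\bot}]\bigr),
\]
the sum being indexed by the same finite set parametrizing the $\lambda$'s.

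The inequality would then follow from the bound $\mathrm{ord}_s(\tilde{\rho}^{*}[v^{\bot}])\geq 1$ for each $v$ contributing to the sum: indeed $\tilde{\rho}(s)\in v^{\bot}$, so the pullback of a local equation of $v^{\bot}$ vanishes at $s$, giving multiplicity at least one as soon as the pullback is not identically zero, i.e.\ as soon as $\tilde{\rho}(\Delta)\not\subset v^{\bot}$. Summing over the $v$'s gives $\mathrm{ord}_s(\rho^{*}\mathcal{Z}(\gamma,-n))\geq m(s,\gamma,n)$, which is the statement of the lemma (I read the $\mathcal{Z}(\gamma,n)$ in the display as a typo for $\mathcal{Z}(\gamma,-n)$, consistent with the sign convention used throughout Section~2).

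The main obstacle is the non-degeneracy of each intersection, i.e.\ the fact that $\tilde{\rho}(\Delta)$ is not contained in any of the hyperplanes $v^{\bot}$ appearing. This is where the hypotheses of Theorem~\ref{main} enter: if $\tilde{\rho}(\Delta)\subset v^{\bot}$ for some $v\in V^{\vee}$, then after spreading $v$ out via parallel transport (which is well defined on $\Delta$, and globally on a finite étale cover by the triviality of $\mathbb{V}^{\vee}_{\Z}/\mathbb{V}_{\Z}$), the span of $v$ would produce a non-zero, purely $(1,1)$ sub-local-system of $\mathbb{V}^{\vee}_{\Q}$, which by Deligne--Schmid semisimplicity would correspond to a polarized sub-variation of Hodge structure of $\mathbb{V}_{\Q}$ of type $(1,1)$; its orthogonal would then contradict either the simplicity of $\mathbb{V}_{\Z}$ or the non-triviality of $\mathcal{F}^{2}\mathcal{V}$ (which forces the period map to have non-trivial image in every direction relevant to the Hodge filtration). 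This rules out the degenerate case and completes the proof.
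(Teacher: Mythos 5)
Your proposal is correct, and its core mechanism is the same as the paper's: each of the $m(s,\gamma,n)$ classes $\lambda$ gives a hyperplane $v^{\bot}=(\sqrt{n}\lambda)^{\bot}$ through $\tilde{\rho}(s)$, hence a branch of the Heegner divisor through $\rho(s)$ contributing at least one to the intersection multiplicity. The implementations differ, though. The paper avoids working with local branches on the orbifold quotient $\Gamma_V^{+}\backslash D_V^{+}$: it chooses a congruence subgroup $\Gamma\subset\Gamma_V^{+}$ of finite index separating the orbits $\Gamma\lambda_1,\dots,\Gamma\lambda_k$, so that on the corresponding finite étale cover $S'\to S$ the $k$ vectors land on $k$ distinct irreducible components of the Heegner divisor upstairs, and then concludes by invariance of $\mathrm{ord}_s$ under étale pullback. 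You instead argue directly in a chart around $\tilde{\rho}(s)$, using local finiteness to write the divisor near $\rho(s)$ as the union of the $v^{\bot}$'s through that point; this is fine for the inequality, but note that your displayed equality $\mathrm{ord}_s(\rho^{*}\mathcal{Z}(\gamma,-n))=\sum_v\mathrm{ord}_s(\tilde{\rho}^{*}[v^{\bot}])$ should really be ``$\geq$'': the quotient map may be ramified along hyperplanes fixed by elements of the stabilizer of $\tilde{\rho}(s)$, which can only increase multiplicities (and, as in the paper's own proof, when $2\gamma=0$ the vectors $\pm v$ index the same hyperplane, a convention issue about whether $\mathcal{Z}(\gamma,-n)$ is taken reduced or as a weighted sum that affects both arguments equally). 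What your route buys is that you make explicit the non-degeneracy point the paper leaves implicit, namely that $\tilde{\rho}(\Delta)\not\subset v^{\bot}$ for any contributing $v$; your justification via simplicity is the right one, and in fact Deligne--Schmid is not needed there: since the real $(1,1)$-part of each fiber is negative definite, the span of the monodromy orbit of such a $v$ would be a nondegenerate, purely $(1,1)$ sub-local system, and the saturation of its $Q$-orthogonal would be a proper polarized sub-variation with nonzero torsion-free quotient, contradicting simplicity directly.
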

\begin{proof}

Let $\gamma$, $n$ and $s$ as in the statement of the proposition. Assume that 
$$\{\lambda\in \mathcal{S}_{\Delta,s},\,\sqrt{n}\lambda\in \gamma+V\}=\{\lambda_{1},\dots,\lambda_{k}\},$$
where $k=m(s,\gamma,n)$. There exists a finite index congruence subgroup $\Gamma$ of $\Gamma_{V}^{+}$ such that the orbits $\Gamma.\lambda_{1},\dots, \Gamma.\lambda_{k}$ are pairwise disjoints. In particular, the divisor $$\mathcal{Z'}:= \Gamma\backslash\left(\bigcup_{\underset{Q(\lambda)=-n}{\lambda\in \gamma+V}}\lambda^{\bot}\right)\subset \Gamma\backslash D_{V}^{+}$$ has at least $k$ irreducible components. The kernel of the morphism $\pi_{1}(S)\rightarrow \Gamma_{V}/\Gamma$ defines a finite étale cover $S'\xrightarrow{\iota'} S$  and we have a commutative diagram 
\begin{center}
\begin{tikzpicture}[scale=1]
\node (s) at (0,0) {$S'$};
\node (s1) at (0,-2) {$S$} ;
\node (d) at (2,0) {$ \Gamma\backslash D_{V}^{+}$};
\node (d') at (2,-2) {$\Gamma_{V}^{+}\backslash D_{V}^{+}$};

\draw[->,>=latex] (s)--node[left]{$\iota'$}(s1);
\draw[->] (s)--node[above] {$\rho'$}(d);
\draw[->,>=latex] (d)--node[right]{$\iota$}(d');
\draw[->] (s1)--node[above] {$\rho$}(d');

\end{tikzpicture}
\end{center}

Remark that $\iota^{*}\mathcal{Z}(\gamma,n)$ is  equal to $\mathcal{Z}'$. Let $s'\in S'$ such that $\iota'(s')=s$. Then $\mathrm{ord}_{s'}(\rho'^{*}\iota^{*}\mathcal{Z}(\gamma,n))\geq k$, since $\mathcal{Z}'$ has at least $k$ irreducible components. By commutativity of the diagram above,  
$$\rho'^{*}\iota^{*}\mathcal{Z}(\gamma,n)=\iota'^{*}\rho^{*}\mathcal{Z}(\gamma,n).$$ 
Since $\iota'$ is étale, we have $$\mathrm{ord}_{s'}(\iota'^{*}\rho^{*}\mathcal{Z}(\gamma,n))=\mathrm{ord}_{s}(\rho^{*}\mathcal{Z}(\gamma,n)),$$
 which yields the desired result.   
\end{proof}
\begin{remarque}{\normalfont
We only have an inequality here because the curve $S'$ may have intersection multiplicity strictly greater than one with a given irreducible component of the Heegner divisor $Z'$. In fact Theorem \ref{main} implies that this does not happen when $n$ is sufficiently large.  
}
\end{remarque}
\begin{proposition}\label{local}
Let $\gamma\in V^{\vee}/V$. For all $s\in S$, there exists a simply connected open neighborhood  $\Delta\subset S$ of $s$ such that $$\liminf_{n}\frac{|\Delta\cap \mathcal{Z}(\gamma,-n)|_{mult}}{n^{\frac{b}{2}}\prod_{p<\infty}\mu_{p}(\gamma,n,V)}\geq\frac{{(2\pi)^{1+\frac{b}{2}}}}{\sqrt{|V^{\vee}/V|}\Gamma(1+\frac{b}{2})}\mu(\Delta),$$
where $n>0$ ranges over numbers in $-Q(\gamma)+\Z$ represented by $-Q$ in $\gamma+V$.
\end{proposition}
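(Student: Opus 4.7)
The plan is to transport the counting problem from the sphere bundle $\mathcal{S}_\Delta$ to $A_0$ via $\phi$, apply Corollary \ref{eskincor} locally on opens where $\phi$ is a diffeomorphism, and integrate the contributions over the fibers.

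First, pick $\Delta\subset S$ a simply connected open neighborhood of $s$ to which the construction of Section \ref{local1} applies. Unwinding the definitions of $m(\cdot,\gamma,n)$ and of $\phi$ gives the identity
\[
|\Delta\cap \mathcal{Z}(\gamma,-n)|_{mult}=\bigl|\{\lambda\in\mathcal{S}_\Delta:\sqrt{n}\,\phi(\lambda)\in\gamma+V\}\bigr|.
\]
Since $\dim_\R\mathcal{S}_\Delta=\dim_\R A_0=b+1$, Lemma \ref{submersion} together with the non-triviality of the variation ensures that $\phi$ is a local diffeomorphism outside a proper real-analytic subset $\mathcal{C}\subset\mathcal{S}_\Delta$ of Lebesgue measure zero. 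I then cover $\mathcal{S}_\Delta\setminus\mathcal{C}$, up to a null set, by a countable disjoint family of open sets $(W_i)_{i\ge 1}$ such that each $\phi|_{W_i}$ is a diffeomorphism onto an open subset $\Omega_i\subset A_0$. For any finite $N$ and opens $\Omega_i'\Subset\Omega_i$ with $\mu_{A_0}$-null boundary, the disjointness of the $W_i$ and the bijectivity of $\phi|_{W_i}$ give
\[
|\Delta\cap\mathcal{Z}(\gamma,-n)|_{mult}\geq\sum_{i=1}^{N}\Bigl|\Omega_i'\cap\tfrac{1}{\sqrt n}(\gamma+V)\Bigr|.
\]
Applying Corollary \ref{eskincor} to each $\Omega_i'$ and using superadditivity of $\liminf$ for finite sums, then letting $N\to\infty$ and $\Omega_i'\nearrow\Omega_i$, the right-hand sum converges to $\sum_i\mu_{A_0}(\Omega_i)=\int_{\mathcal{S}_\Delta}\phi^*\omega_{A_0}$ by the change of variable formula (noting that $\phi^*\omega_{A_0}$ vanishes on $\mathcal{C}$), so
\[
\liminf_n\frac{|\Delta\cap\mathcal{Z}(\gamma,-n)|_{mult}}{n^{b/2}\prod_p\mu_p(\gamma,n,V)}\geq\frac{2^{b/2}}{\sqrt{|V^\vee/V|}}\int_{\mathcal{S}_\Delta}\phi^*\omega_{A_0}.
\]

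The remaining main step, which I expect to be the chief obstacle, is the identity
\[
\int_{\mathcal{S}_\Delta}\phi^*\omega_{A_0}=\mathrm{vol}(S^{b+1})\,\mu(\Delta);
\]
together with $2^{b/2}\,\mathrm{vol}(S^{b+1})=(2\pi)^{1+b/2}/\Gamma(1+b/2)$ this yields the claimed constant. To prove it I work fiberwise over $\Delta$ in the trivialization $\mathcal{V}_\R\cong\Delta\times V_\R$. In an orthogonal basis $(e_1,e_2,\xi_1,\ldots,\xi_b)$ as in Proposition \ref{secondmesure} with $P_{s_0}=\mathrm{span}(e_1,e_2)$, Griffiths transversality allows one to choose a local lift $v(t)=(e_1+ie_2)+t\sum_j\alpha_j\xi_j+O(t^2)$ of the period with $\alpha_j\in\C$; the Hodge metric $h(t)=(v(t),\overline{v(t)})$ then yields the Chern form $c_1(\mathcal{F}^2\mathcal{V})|_{s_0}=\frac{|\alpha|^2}{2\pi}\,dt_1\wedge dt_2$ where $|\alpha|^2=\sum_j|\alpha_j|^2$. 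The equations cutting out $\mathcal{S}_\Delta$ inside $\Delta\times V_\R$ show that at a fiber point $\lambda_0=\sum_j y_j\xi_j$ the Jacobian of $\phi$ equals multiplication by $|\sum_j y_j\alpha_j|^2$ on the base directions and the identity on the sphere directions. Using the elementary identity $\int_{S^{b-1}}|\sum_j y_j\alpha_j|^2=|\alpha|^2\,\mathrm{vol}(S^{b-1})/b$ (with the round volume form) together with the recursion $2\pi\,\mathrm{vol}(S^{b-1})/b=\mathrm{vol}(S^{b+1})$, the $|\alpha|^2$ factors cancel after integrating over each fiber sphere, leaving a factor $\mathrm{vol}(S^{b+1})$ which, once integrated over $\Delta$, produces $\mathrm{vol}(S^{b+1})\,\mu(\Delta)$ and completes the proof.
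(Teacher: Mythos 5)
Your argument is correct, and its first half (transporting the count to $A_0$ via $\phi$, covering the locus where $\phi$ is a local diffeomorphism by opens on which it is injective, and applying Corollary \ref{eskincor} there) is essentially the paper's proof of this proposition, with only cosmetic differences: you use a countable disjoint exhaustion where the paper removes an $\epsilon$-neighborhood of the singular set and uses a finite cover, and your application of Corollary \ref{eskincor} to relatively compact opens with null boundary is the same routine approximation the paper makes. Where you genuinely diverge is in the key identity $\int_{\mathcal{S}_\Delta}\phi^{*}\omega_{A_0}=\frac{2\pi^{1+\frac{b}{2}}}{\Gamma(1+\frac{b}{2})}\mu(\Delta)$, which is the paper's Lemma \ref{crucial}. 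The paper proves it by working on the homogeneous sphere bundle over all of $D_V^{+}$: it identifies $G$-invariant forms with elements of $\bigwedge\mathfrak{p}_{0}^{\vee}$, invokes the curvature formula $\Theta(X,Y)=-\lambda([X,Y])$ from \cite{carlson} to get $\omega=\frac{1}{2\pi}\sum_i dU_i\wedge dV_i$, shows $\phi^{*}\omega_{A_0}=\omega_i\wedge\omega^{(i)}$ for each $i$ by pulling back along $p_i:G\to\mathcal{S}$, and averages over $i$. You instead compute everything directly on the curve: a normalized holomorphic lift $v(t)=(e_1+ie_2)+t\sum_j\alpha_j\xi_j+O(t^2)$ (which Griffiths transversality indeed permits) gives $c_1=\frac{|\alpha|^2}{2\pi}dt_1\wedge dt_2$, the defining equation $(\lambda.v(t))=0$ of $\mathcal{S}_\Delta$ gives the fiberwise Jacobian of $\phi$ as multiplication by $c(\lambda)=\sum_j y_j\alpha_j$ on the base two-plane (determinant $|c(\lambda)|^2\geq 0$, so orientation is automatic) and the identity on sphere directions, and the spherical average $\int_{\mathbb{S}^{b-1}}|\sum_j y_j\alpha_j|^2=|\alpha|^2\mathrm{vol}(\mathbb{S}^{b-1})/b$ together with $\frac{2\pi}{b}\mathrm{vol}(\mathbb{S}^{b-1})=\mathrm{vol}(\mathbb{S}^{b+1})$ cancels the $|\alpha|^2$ and yields the constant; I checked these computations and they match the paper's value. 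Your route is more elementary and self-contained (no invariant-form formalism, no citation of the curvature formula), and it makes transparent a point the paper's averaging obscures, namely that the density of $\phi^{*}\omega_{A_0}$ genuinely depends on the fiber point and only averages out after integration over each sphere; the paper's Lie-theoretic argument, by contrast, buys uniformity from $G$-invariance, reduces everything to one Lie-algebra computation at a single point, and is the formulation that generalizes most readily beyond one-dimensional bases. The only things to tighten in a final write-up are the remark that the pointwise computation at $s_0$ applies at every $t\in\Delta$ with its own adapted frame (so the identity of fiber integrals holds as densities on $\Delta$), and the standard replacement of the opens $\Omega_i'$ by slightly smaller compacta with null boundary when quoting Corollary \ref{eskincor}.
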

\begin{proof}

The map $\pi:\,G\rightarrow D_{V}^{+}\simeq G/K$ is submersive. Hence there exists $U$ a simply connected open subset of $D_{V}^{+}$ around $P_{0}$ such that the following diagram is commutative: 
\begin{center}
\begin{tikzpicture}[scale=1]
\node (u) at (0,0) {$U$};
\node (uk) at (2,2) {$U\times K$} ;
\node (pu) at (-2,2) {$\pi^{-1}(U)$};
\draw[->,>=latex] (uk)--(u);
\draw[->] (pu)--node[above] {$\sim$}(uk);
\draw[->,>=latex] (pu)--(u);

\end{tikzpicture}
\end{center}

Assume that $\rho(\Delta)$ is contained in $U$. We have a holomorphic map $\Upsilon:\,\Delta\rightarrow G$ given by the composition 
\begin{align*}
\Delta&\xrightarrow{\rho} U\rightarrow U\times K \xrightarrow{\sim} \pi^{-1}(U)\\
t&\mapsto \rho(t)\mapsto (\rho(t),1_K)\mapsto \Upsilon_t\\
\end{align*}
 
Hence, we have a local trivialization of the fibration $\mathcal{S}_{\Delta}$ given by 

\begin{center}
\begin{tikzpicture}[scale=1]
\node (delta) at (0,0) {$\Delta$};
\node (a) at (2,2) {$\mathcal{S}_{\Delta}$} ;
\node (deltas) at (-2,2) {$\Delta\times \mathbb{S}^{b-1}$};
\node (depart) at (-2,1.5) {$(t,\lambda)$};
\node (arrivee) at (2,1.5) {$(t,\Upsilon_t.\lambda)$};
\draw[->,>=latex] (depart)--(u);
\draw[->] (deltas)--node[above] {$\sim$}(a);
\draw[->,>=latex] (arrivee)--(u);
\draw[->] (depart)--(arrivee);

\end{tikzpicture}
\end{center}

where  $\mathbb{S}^{b-1}=\{x\in P_{0}^{\bot},\, Q(x)=-1\}$.
\bigskip

The map $$\phi:\, \mathcal{S}_{\Delta}\rightarrow A_0$$ is, by lemma \ref{submersion},  submersive at $(t,\lambda)$ if the map $$\overline{\nabla}_u(\lambda):T_uU\rightarrow \mathcal{V}^{0,2}_u$$ is surjective, or equivalently not-identically zero, since $T_{u}U$ is of dimension $1$ over $\C$. 
Let $\mathcal{S}_{\Delta}^{sing}$ the locus where $\phi$ is not submersive. Then $\mathcal{S}_{\Delta}^{sing}$ is a proper real analytic closed subset of $\mathcal{S}_{\Delta}$ negligible for the Lebesgue measure. Outside $\mathcal{S}_{\Delta}^{sing}$, $\phi$ is submersive and in fact a local diffeomorphism by equality of dimensions.

Let $\psi$ be the composite map $$\psi:\,\Delta\times \mathbb{S}^{b-1}\rightarrow \mathcal{S}_{\Delta}\xrightarrow{\phi}A_{0}.$$ The pullback $\phi^{*}\omega_{A_{0}}$ is a volume form on $\mathcal{S}_{\Delta}$ and so is $\psi^{*}\omega_{0}$ on $\Delta\times \mathbb{S}^{b-1}$.

Let $\epsilon >0$, and let $\mathcal{S}_{\Delta}^{sing,\epsilon}$ be an open subset containing  $\mathcal{S}_{\Delta}^{sing}$ such that $\int_{\mathcal{S}_{\Delta}^{sing,\epsilon}}\phi^{*}\omega_{A_{0}}\leq\epsilon$. Up to shrinking $\Delta$, we  can find a finite open cover $W_i$ of $\mathcal{S}_{\Delta}\backslash \mathcal{S}_{\Delta}^{sing,\epsilon}$ such the restriction $\phi_i$ of $\phi$ to $W_i$ is a diffeomorphism. 

By Corollary \ref{eskincor}, we get 
\begin{align*}
\begin{split}
|\Delta\cap \mathcal{Z}(\gamma,-n)|_{mult} &\geq\sum_{i}|\{\lambda\in \mathrm{Im}(\phi_{i}),\, \sqrt{n}\lambda\in\gamma +V\}|\\
&=\sum_{i}\int_{W_{i}}\phi^{*}\omega_{A_{0}}.\frac{2^{\frac{b}{2}}.n^{\frac{b}{2}}}{\sqrt{|V^{\vee}/V|}}. \prod_{p}\mu_{p}(\gamma,n,V)+o(n^{\frac{b}{2}})\\
&\geq\frac{{2}^{\frac{b}{2}}.\left(\int_{\mathcal{S}_{\Delta}}\phi^{*}\omega_{A_0}-\epsilon\right).n^{\frac{b}{2}}}{\sqrt{|V^{\vee}/V|}} . \prod_{p}\mu_{p}(\gamma,n,V)+o(n^{\frac{b}{2}})\\
\end{split}
\end{align*} 
Here we used that $\int _{\mathcal{S}_{\Delta}}\phi^{*}\omega_{A_{0}}=\mu(\Delta).\frac{2.\pi^{1+\frac{b}{2}}}{\Gamma(1+\frac{b}{2})},$ a result we prove in Lemma \ref{crucial} below. Hence we have  

\begin{align*}
\liminf_{n}\frac{|\Delta\cap \mathcal{Z}(\gamma,-n)|_{mult}}{n^{\frac{b}{2}}\prod_{p<\infty}\mu_{p}(\gamma,n,V)}\geq  \frac{{(2\pi)^{1+\frac{b}{2}}}}{\sqrt{|V^{\vee}/V|}\Gamma(1+\frac{b}{2})}\mu(\Delta)-\frac{2^{\frac{b}{2}}}{\sqrt{|V^{\vee}/V|}}.\epsilon
\end{align*}
By letting $\epsilon \rightarrow 0$, we get the desired result.  
\end{proof}
\bigskip
\begin{lemma}\label{crucial}
We have:
$$\int _{\mathcal{S}_{\Delta}}\phi^{*}\omega_{A_{0}}=\mu(\Delta).\frac{2.\pi^{1+\frac{b}{2}}}{\Gamma(1+\frac{b}{2})}.$$
\end{lemma}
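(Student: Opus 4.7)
The strategy is to compute $\phi^{*}\omega_{A_{0}}$ pointwise via a product decomposition of $\omega_{A_0}$ coming from the $G$-structure, then integrate along the sphere fibers of $\mathcal{S}_{\Delta}\to\Delta$, and finally identify the residual 2-form on $\Delta$ with a multiple of the Chern form $\omega$ of $\mathcal{F}^{2}\mathcal{V}$.

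First, by the $G$-invariance of $\omega_{A_{0}}$ and the transitivity of the $G$-action on pairs $(P,\lambda)$ with $P$ a positive 2-plane and $\lambda\in P^{\bot}\cap A_{0}$, the model identity $\omega_{A_{0},\xi_{1}}=(de_{1}\wedge de_{2})\wedge(d\xi_{2}\wedge\cdots\wedge d\xi_{b})$ at the reference point propagates, for any $\lambda_{0}$ lying in the negative unit sphere $\mathbb{S}^{b-1}_{t_{0}}\subset P_{t_{0}}^{\bot}$, to a factorization $\omega_{A_{0},\lambda_{0}}=\omega_{P_{t_{0}}}\wedge\omega_{\mathbb{S}^{b-1}_{t_{0}},\lambda_{0}}$, where $\omega_{P_{t_{0}}}$ is the 2-volume on $P_{t_{0}}$ induced by $(\,.\,)$ in the chosen basis, and $\omega_{\mathbb{S}^{b-1}_{t_{0}},\lambda_{0}}$ is the standard area form of $\mathbb{S}^{b-1}_{t_{0}}$.

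Second, in the flat trivialization $\mathcal{V}_{\R}\simeq\Delta\times V_{\R}$ the map $\phi$ is projection onto the second factor, and differentiating the incidence conditions $\lambda(t)\in P_{t}^{\bot}$ and $Q(\lambda(t))=-1$ shows that $d\phi$ at $(t_{0},\lambda_{0})$ sends a horizontal vector $(v,0)$, $v\in T_{t_{0}}\Delta$, into $P_{t_{0}}$, linearly in $v$ and $\lambda_{0}$ via the period differential $\alpha_{v}:=d\rho_{t_{0}}(v)\in \operatorname{Hom}(P_{t_{0}},P_{t_{0}}^{\bot})$, while vertical vectors in $T_{\lambda_{0}}\mathbb{S}^{b-1}_{t_{0}}$ map to themselves. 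Combined with the product decomposition, $\phi^{*}\omega_{A_{0}}$ factors pointwise as (a 2-form on $T_{t_{0}}\Delta$ depending on $\lambda_{0}$) $\wedge\,\omega_{\mathbb{S}^{b-1}_{t_{0}},\lambda_{0}}$. Fubini along the sphere bundle then reduces the computation to an inner integral over $\mathbb{S}^{b-1}_{t_{0}}$, which, using the moment identity $\int_{\mathbb{S}^{b-1}}\lambda_{j}\lambda_{k}\,d\sigma=\frac{\pi^{b/2}}{\Gamma(b/2+1)}\delta_{jk}$ in the orthogonal basis $(\xi_{j})$ of $P_{t_{0}}^{\bot}$, produces an explicit 2-form $\beta_{t_{0}}$ on $T_{t_{0}}\Delta$ quadratic in the matrix entries of $\alpha$.

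The crucial, and hardest, step is to identify $\beta_{t_{0}}$ with a constant multiple of $\omega|_{t_{0}}$. Pick a local holomorphic frame $s=x+iy$ of $\mathcal{F}^{2}\mathcal{V}$ with $(x(t_{0}),y(t_{0}))$ an orthogonal basis of $P_{t_{0}}$ satisfying $Q(x)=Q(y)=1$, and, using Griffiths transversality, expand $\partial_{t}s=fs+g$ with $g=g_{r}+ig_{i}\in(\mathcal{V}^{1,1}_{t_{0}})_{\C}=(P_{t_{0}}^{\bot})_{\C}$; after a harmless holomorphic rescaling one can moreover assume $f(t_{0})=0$. The Cauchy--Riemann equations in the flat trivialization identify $\alpha_{\partial_{x}}(x)=g_{r}$, $\alpha_{\partial_{x}}(y)=g_{i}$, $\alpha_{\partial_{y}}(x)=-g_{i}$, $\alpha_{\partial_{y}}(y)=g_{r}$, whence a short computation gives
\[
\beta_{t_{0}}(\partial_{x},\partial_{y})=\frac{\pi^{b/2}}{\Gamma(b/2+1)}\bigl(\|g_{r}\|^{2}+\|g_{i}\|^{2}\bigr),
\]
with Euclidean norm in the $(\xi_{j})$-basis. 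On the other hand, the Chern form of the Hodge metric $h(s)=(s,\bar s)$ is $c_{1}(\mathcal{F}^{2}\mathcal{V},h)=-\tfrac{i}{2\pi}\partial\bar\partial\log(s,\bar s)$, and the same expansion of $\partial_{t}s$ yields $\omega|_{t_{0}}=\frac{1}{2\pi}(\|g_{r}\|^{2}+\|g_{i}\|^{2})\,dx\wedge dy$. Comparing, $\beta_{t_{0}}=\frac{2\pi^{1+b/2}}{\Gamma(1+b/2)}\,\omega|_{t_{0}}$, and integrating over $\Delta$ using $\int_{\Delta}\omega=\mu(\Delta)$ completes the proof. The main obstacle is this final curvature computation, where the interaction of Griffiths transversality, the polarization $(\,.\,)$, and the Hermitian metric $(s,\bar s)$ has to be tracked with careful bookkeeping of signs and constants.
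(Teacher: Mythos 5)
Your plan is correct, and the constants do come out right: with your normalizations one gets $\beta_{t_0}(\partial_x,\partial_y)=\frac{\pi^{b/2}}{\Gamma(1+\frac{b}{2})}\bigl(\|g_r\|^2+\|g_i\|^2\bigr)$ from the second-moment identity (the factor $\frac14$ from projecting onto the $Q=1$ basis of $P_{t_0}$ cancels against the $4$ coming from $(\lambda_0.g_r)=-2\sum_j\lambda_jc_j$), while $-\frac{i}{2\pi}\partial\bar\partial\log(s,\bar s)$ with $h(t_0)=4$ and $(g,\bar g)=-2(\|g_r\|^2+\|g_i\|^2)$ gives $\omega|_{t_0}=\frac{1}{2\pi}(\|g_r\|^2+\|g_i\|^2)\,dx\wedge dy$, so the ratio $\frac{2\pi^{1+\frac{b}{2}}}{\Gamma(1+\frac{b}{2})}$ and the lemma follow. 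Your route, however, is genuinely different from the paper's. The paper stays entirely within $G$-invariant forms on the sphere bundle over $D_V^{+}$: it quotes the Lie-theoretic curvature formula $\Theta(X,Y)=-\lambda([X,Y])$ to identify $\omega$ with $\frac{1}{2\pi}\sum_i dU_i\wedge dV_i$ in $\bigwedge^{2}\mathfrak{p}_0^{\vee}$, proves the exact identity $\phi^{*}\omega_{A_0}=\omega_i\wedge\omega^{(i)}$ for every $i$ by pulling back along $p_i:G\to\mathcal{S}$, averages over $i$ to obtain $\psi^{*}\omega_{A_0}=\frac{2\pi}{b}\,\omega\wedge\omega_{\mathbb{S}^{b-1}}$, and only then integrates the fiber volume $\frac{b\pi^{b/2}}{\Gamma(1+\frac{b}{2})}$. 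You instead work in a flat trivialization: pointwise factorization of $\omega_{A_0}$, identification of the $P_{t_0}$-component of $d\phi$ with the period derivative via Griffiths transversality, fiber integration with the moment identity $\int_{\mathbb{S}^{b-1}}\lambda_j\lambda_k\,d\sigma=\frac{\pi^{b/2}}{\Gamma(1+\frac{b}{2})}\delta_{jk}$ (which plays exactly the role of the paper's averaging over the $b$ directions), and a direct computation of $c_1(\mathcal{F}^2\mathcal{V})$ from a holomorphic frame instead of citing the automorphic curvature formula. Your version is more elementary and self-contained but demands the sign/constant bookkeeping you flag; the paper's is cleaner because invariance yields an identity of forms on the whole bundle before any integration. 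Two points to make precise when writing it up: the image of a "horizontal" vector under $d\phi$ is not literally contained in $P_{t_0}$ — only its $P_{t_0}$-component is independent of the chosen horizontal splitting, and it alone survives the wedge with the vertical volume — and the decomposition $\partial_t s=fs+g$ with $g\in(\mathcal{V}^{1,1}_{t_0})_{\C}$ is a pointwise statement at $t_0$, which suffices since both the curvature and $\alpha_v$ at $t_0$ depend only on first derivatives there.
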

\begin{proof}
The differential of the map $\pi:\,G\rightarrow D_{L}^{+}$  at the identity of $G$ induces an isomorphism of $\mathfrak{p}_{0}$ with the tangent space of $D_{L}^{+}$ at $P_{0}$. Since $\omega$ is a $G$-invariant $2$-form, it corresponds uniquely to an element ${\Omega}$ of $\bigwedge^{2} \mathfrak{p}_{0}^{\vee}$. Fix an orthogonal basis $(e_{1},e_{2},\xi_{1},\dots,\xi_{b})$ of $V_{\R}$ compatible with the decomposition $V_{\R}=P_{0}\oplus P_{0}^{\bot}$ and such that $Q(e_{i})=-Q(\xi_{j})=1$ for $i=1,2$ and $j=1,\dots,b$. The Lie algebra $\mathfrak{g}_{0}$ is then identified with  $\mathfrak{so}(2,b)$ and  an element $M\in\mathfrak{so}(2,b)$ is written by blocks in the following way 
\begin{align}\label{matrix}
\begin{pmatrix}
0&\theta&U\\
-\theta&0&V\\
^tU& ^tV&N\\
\end{pmatrix}
\end{align}
where $\theta\in \R$, $U$ and $V$ are $1\times b$-dimensional real matrices, and  $N$ is a $b\times b$-dimensional antisymmetric real matrix. 
For $i,j=1,\dots,b+2$, let $E_{i,j}$ be the matrix whose coefficients are zero except the  coefficient $(i,j)$ which is equal to $1$. For $i=1,\dots,b$, define $U_{i}=E_{1,2+i}+E_{2+i,1}$ and $V_{i}=E_{2,2+i}+E_{2+i,2}$. The family $(U_{i},V_{i})_{i=1,\dots,b}$ is  a basis of $\mathfrak{p}_{0}$. 

By \cite[13.1]{carlson} (see also \cite[5.3]{goresky}), the curvature $\Theta$  of the Hodge bundle is given by $$\Theta(X,Y)=-\lambda([X,Y])$$ for $X,Y \in \mathfrak{p}$ and where $\lambda$ is the linear form on $\mathfrak{so}(2,b)$ associating to an element $M$ written as in (\ref{matrix}) the element $i\theta \in \C$. Notice that $\lambda$ is the differential of the generator $\chi$ of the group of characters of $K$ whose associated automorphic line bundle is the Hodge bundle (see \cite{zucker}). A  computation shows that
\begin{align}\label{hodge}
\Omega=\frac{i}{2\pi}\Theta=\frac{1}{2\pi}\sum_{i=1}^{b}dU_{i}\wedge dV_{i}.
\end{align}

Recall that the Killing form $B$ of $\mathfrak{g}_{0}$ is negative definite on the Lie algebra $\mathfrak{k}_{0}$ of $K$ and we have thus an orthogonal decomposition (see \cite{helgason}):
$$\mathfrak{g}_{0}=\mathfrak{k}_{0}\oplus\mathfrak{p}_{0}.$$ 
Let $\xi\in P_{0}^{\bot}$ such that $Q(\xi)=-1$. Then $K'=\mathrm{SO}(P_{0})\times\mathrm{SO}((\R\xi\oplus P_{0})^{\bot})$
 is a maximal compact subgroup of $H$ and we have similarly an orthogonal decomposition: 
  $$\mathfrak{h}_{0}=\mathfrak{k'}_{0}\oplus\mathfrak{p'}_{0}.$$
Let $\mathfrak{s}^{b-1}$ and $\widetilde{\mathfrak{p}}$ the orthogonal complements of $\mathfrak{k'}_{0}$ and $\mathfrak{p'}_{0}$ in $\mathfrak{k}_{0}$ and $\mathfrak{p}_{0}$ respectively with respect to the Killing form $B$ of $\mathfrak{g}_{0}$: 
\begin{align*}
\mathfrak{k}_{0}=\mathfrak{k'}_{0}\oplus \mathfrak{s}^{b-1}\quad,\quad \mathfrak{p}_0=\mathfrak{p'}_0\oplus \widetilde{\mathfrak{p}}.
\end{align*}  
The  quotient $\mathfrak{g}_{0}/\mathfrak{h}_{0}$ can then be identified to $\mathfrak{s}^{b-1}\oplus \widetilde{\mathfrak{p}}.$
The space $\mathfrak{s}^{b-1}$  can be identified, via the differential at the identity of $\mathrm{SO}(P_{0}^{\bot})$ of the map $\pi_{\xi_{1}}$ introduced in (\ref{action1}), with the tangent space at $\xi_{1}$ of the sphere $$\mathbb{S}^{b-1}:=\{x\in P_{0}^{\bot},\, Q(x)=-1\},$$ which explains the notation. 
Let $\omega_{\mathbb{S}^{b-1}}$ be the unique $\mathrm{SO}(P_{0}^{\bot})$-invariant volume form on $\mathbb{S}^{b-1}$ such that $$\omega_{\mathbb{S}^{b-1},\xi_{1}}=d\xi_{2}\wedge\dots\wedge d\xi_{b}.$$ Then $(\mathbb{S}^{b-1},\omega_{\mathbb{S}^{b-1}})$  is isometric to a sphere of dimension $b-1$ and radius $1$ with its standard volume form, hence 
\begin{align}\label{computation}	
\int_{\mathbb{S}^{b-1}}\omega_{\mathbb{S}^{b-1}}=\frac{b.{\pi}^{\frac{b}{2}}}{\Gamma(1+\frac{b}{2})}.
\end{align}
\bigskip

The group $G$ acts transitively on the left on $\mathcal{S}:=\mathcal{S}_{D_{L}^{+}}$ via $g.(w,\xi)=(g.w,g.\xi)$ for $g\in G$, $w\in D_{V}^{+}$ and $\xi \in \mathcal{S}_{u}$. The map $\phi:\,\mathcal{S}\rightarrow A_{0}$ is  $G$-equivariant. For each $i=1,\dots,b$, we have thus a surjective map 
\begin{align*}
p_{i}:\, G&\rightarrow \mathcal{S}\\
g&\mapsto (g.P_{0},g.\xi_{i}) 
\end{align*}
that fits into the following commutative diagram
\begin{center}
\begin{tikzpicture}[scale=1]
\node (g) at (0,0) {$G$};
\node (a') at (-2,-2) {$\mathcal{S}$} ;
\node (a) at (2,-2) {$A_0$};
\draw[->,>=latex] (g)--node[right]{$\pi_{\xi_{i}}$}(a);
\draw[->,>=latex] (a')--node[below] {$\phi$}(a);
\draw[->,>=latex] (g)--node[left]{$p_i$}(a');
\end{tikzpicture}
\end{center}
The differential of $p_i$ induces an isomorphism between the tangent space of $\mathcal{S}$ at $(P_0,\xi_i)$ and $\mathfrak{s}^{b-1}\oplus \mathfrak{p}_{0}$, where $\mathfrak{s}^{b-1}$  is isomorphic to the tangent space of $\mathbb{S}^{b-1}$ at $\xi_{i}$. The element $dU_{i}\wedge dV_{i} \in \bigwedge^{2}\mathfrak{p}_{0}^{\vee}$ defines a $G$-invariant $2$-form on $\mathcal{S}$ that we denote by  $\omega_{i}$.  
Let \begin{align*}
t_{i}:\,K&\rightarrow \mathbb{S}^{b-1}\\
k&\rightarrow k.\xi_{i}.
\end{align*} 
The pull back of the form $\omega_{\mathbb{S}^{b-1}}$ along $t_{i}$ is identified to an element $dY^{i}_{1}\wedge\dots dY^{i}_{b-1}$ of $\bigwedge^{b-1}\mathfrak{k}_{0}^{\vee}$ for an orthogonal family $(Y^{i}_{1},\dots,Y^{i}_{b-1})$ of $\mathfrak{k}_{0}$. Let $\omega^{(i)}$ be the $G$-invariant $(b-1)$-from on $\mathcal{S}$ such that $p_{i}^{*}\omega^{(i)}$ is equal to  $dY^{i}_{1}\wedge\dots dY^{i}_{b-1}$ in $\bigwedge^{b-1}\mathfrak{g}_{0}^{\vee}$

For each $i=1,\dots,r$, we have by (\ref{relation1}) $$p_{i}^{*}\phi^{*}\omega_{A_0}=\pi_{\xi_{i}}^{*}\omega_{A_0}=dU_i\wedge dV_i\wedge dY^{i}_{1}\wedge\dots dY^{i}_{b-1}.$$
which is equal to $p_{i}^{*}\omega_{i}\wedge p_{i}^{*}\omega^{(i)}=p_{i}^{*}(\omega_{i}\wedge\omega^{(i)})$ by the construction itself. Hence, $\phi^{*}\omega_{A_0}=\omega_{i}\wedge\omega^{(i)}$ and summing over $i$ yields \begin{align*}
\phi^{*}\omega_{A_{0}}=\frac{1}{b}\sum_{i=1}^{b}\omega_{i}\wedge \omega^{(i)}
\end{align*}

Notice now that the restrictions to $\mathbb{S}^{b-1}$ of the forms $\omega^{(i)}$  are all equal to the form $\omega_{\mathbb{S}_{b-1}}$ and that $\sum_{i=1}^{b}\omega_{i}=2\pi \omega$ by (\ref{hodge}). Hence 
$$\psi^{*}\omega_{A_{0}}=\frac{2\pi}{b}\omega\wedge\omega_{\mathbb{S}^{b-1}}.$$
We have in particular
$$\frac{2\pi}{b}\mu(\Delta).\int_{\mathbb{S}^{b-1}}\omega_{\mathbb{S}^{b-1}}=\int _{\mathcal{S}_{\Delta}}\phi^{*}\omega_{A_{0}}.$$ 
Combined with (\ref{computation}), this yields the desired result.
\end{proof}

\section{End of the proof and applications}
The goal of the section is to prove Theorem \ref{main}. We keep the notations from previous sections, i.e  $\{\mathbb{V}_{\Z},\mathcal{F}^{\bullet}\mathcal{V},Q\}$ is a simple, non trivial, polarized variation of Hodge structure over a quasi-projective curve $S$ such that the local system $\mathbb{V}_{\Z}^{\vee}/ \mathbb{V}_{\Z}$ is trivial and  $\rho:\, S\rightarrow\Gamma_{V}\backslash D_{V}$  is the corresponding period map.
\subsection{First reduction}
Let $H$ be a maximal isotropic subgroup of $V^{\vee}/V$ with respect to $Q$ and let $\overline{\mathbb{V}}_{\Z}$ be the inverse image in $\mathbb{V}_{\Z}$ of $\underline{H}_{S}$, the trivial local system of fiber $H$. Then $\{\overline{\mathbb{V}}_{\Z},\mathcal{F}^{\bullet}\mathcal{V},Q\}$ defines a simple, non-trivial, polarized variation of Hodge structure over $S$. Moreover, the fibers of the local system $\overline{\mathbb{V}}_{\Z}$ are isomorphic to a lattice $\overline{V}$ which has only strongly primitive totally isotropic planes and $\overline{V}^{\vee}/\overline{V}\simeq H^{\bot}/H$. 
\begin{proposition}\label{reduction}
If Theorem \ref{main} holds for $\{\overline{\mathbb{V}}_{\Z},\mathcal{F}^{\bullet}\mathcal{V},Q\}$ then it holds for $\{\mathbb{V}_{\Z},\mathcal{F}^{\bullet}\mathcal{V},Q\}$. 
\end{proposition}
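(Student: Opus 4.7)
My plan is to transfer Theorem \ref{main} from $\{\overline{\mathbb{V}}_\Z,\mathcal{F}^\bullet\mathcal{V},Q\}$ --- where it applies with the trivial maximal isotropic subgroup since $\overline{V}^\vee/\overline{V}\simeq H^\bot/H$ is anisotropic --- to $\{\mathbb{V}_\Z,\mathcal{F}^\bullet\mathcal{V},Q\}$, by combining the assumed asymptotic upstairs with the unconditional lower bound of Proposition \ref{local} downstairs, and then separating individual classes via an elementary squeeze.

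First I would set up the comparison of counts. Fix $\gamma\in H^\bot$ and write $\bar{\gamma}$ for its image in $H^\bot/H=\overline{V}^\vee/\overline{V}$. Since $\overline{V}^\vee\subset V^\vee$, every $\lambda\in\overline{V}^\vee$ of class $\bar{\gamma}$ has class in $V^\vee/V$ lying in $\gamma+H$; conversely any $\lambda\in V^\vee$ whose class lies in $\gamma+H\subset H^\bot$ pairs integrally against $\overline{V}$ and thus belongs to $\overline{V}^\vee$. The Hodge filtrations on $\mathbb{V}_{\Z,s}$ and $\overline{\mathbb{V}}_{\Z,s}$ coincide, so this yields the exact identity
\[
N_{\overline{V}}(\bar{\gamma},n) \;=\; \sum_{h\in H}N_{V}(\gamma+h,n)
\]
between the counts (with multiplicity) of Theorem \ref{main} for $\overline{\mathbb{V}}_\Z$ and for $\mathbb{V}_\Z$ respectively, valid for every $n\in -Q(\gamma)+\Z_{>0}$.

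Next I would invoke Theorem \ref{main}(i) for $\overline{\mathbb{V}}_\Z$ to evaluate the left-hand side asymptotically and then rewrite it as the expected sum of main terms for $V$ using the local density identity
\[
|H|\prod_{p}\mu_p(\bar{\gamma},n,\overline{V}) \;=\; \sum_{h\in H}\prod_{p}\mu_p(\gamma+h,n,V),
\]
together with $\sqrt{|\overline{V}^\vee/\overline{V}|}=\sqrt{|V^\vee/V|}/|H|$. This identity would be checked prime by prime: it is tautological for $p\nmid|H|$ since then $V\otimes\Z_p=\overline{V}\otimes\Z_p$ and $H_p=0$, while for the finitely many remaining primes it reduces to a direct count of solutions modulo $p^s$ over the cosets of $H_p\subset(V^\vee/V)_p$. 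Consequently $\sum_{h\in H}N_{V}(\gamma+h,n)$ is asymptotic to $\sum_{h\in H}$ of the main term predicted by Theorem \ref{main}(i) at the class $\gamma+h$.

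Finally I would apply Proposition \ref{local} on a finite cover of $S$ by small simply connected opens to obtain, for each $h\in H$, the liminf lower bound matching its individual predicted main term. Since all predicted main terms have the same order $n^{b/2}$ (because $\prod_p\mu_p\asymp 1$), the elementary squeeze --- if $a_n(h)\geq b_n(h)(1-o(1))$ for each $h\in H$ and $\sum_h a_n(h)\sim\sum_h b_n(h)$, then $a_n(h)\sim b_n(h)$ for each $h$ --- promotes these liminfs into genuine asymptotics, proving Theorem \ref{main}(i) for $\mathbb{V}_\Z$ at $\gamma$. Applying the same squeeze to the weighted counting measures $\sum_{s\in S}m(s,\gamma+h,n)\delta_s$ on $S$, tested against any continuous compactly supported function, transfers the equidistribution statement (ii) from $\overline{V}$ to each class $\gamma+h$ individually. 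The main obstacle is the local density identity above: although it is a natural consequence of the compatibility of the map $p^\ast$ of Section \ref{technical} with Eisenstein series, making it completely explicit will require some $p$-adic bookkeeping at the primes dividing $|H|$.
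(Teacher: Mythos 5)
Your proposal is correct and follows the same overall strategy as the paper: decompose the count over the $H$-cosets via $N_{\overline{V}}(\bar{\gamma},n)=\sum_{h\in H}N_V(\gamma+h,n)$, feed in the assumed asymptotic upstairs, use Proposition \ref{local} as an unconditional lower bound for each individual class, and squeeze. The one genuinely different ingredient is how you match the main term for $\overline{V}$ with the sum of main terms for $V$. The paper does this through Lemma \ref{curiousrelation}: using the maps $p_*$ and $p^*$ on vector-valued modular forms it observes that $p^*E_{\overline{V}}-p^*p_*E_V$ is cuspidal, so $\sum_{t\in H}c(\gamma+t,n)=\overline{c}(\bar{\gamma},n)+O_\epsilon(n^{\frac{b+2}{4}+\epsilon})$, an identity up to a harmless cusp-form error. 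You instead assert the exact singular-series identity $|H|\prod_p\mu_p(\bar{\gamma},n,\overline{V})=\sum_{h\in H}\prod_p\mu_p(\gamma+h,n,V)$ together with $\sqrt{|\overline{V}^\vee/\overline{V}|}=\sqrt{|V^\vee/V|}/|H|$. This identity is indeed true and your sketch is viable: since $\mu_p(\gamma+h,n,V)$ only depends on the $p$-component $h_p$, one factors the sum over $H\simeq\prod_pH_p$ through the product over $p$, and at each $p$ the comparison of counts modulo $p^s\overline{V}_p$ versus $p^sV_p$ (the value of $Q$ being well defined modulo $p^s$ on either quotient) gives $\sum_{h_p\in H_p}\mu_p(\gamma+h_p,n,V)=|H_p|\mu_p(\bar{\gamma},n,\overline{V})$; so the "$p$-adic bookkeeping" you flag is real but routine, and it yields a stronger (exact) statement than the paper needs, at the cost of redoing local computations that the paper sidesteps by reusing its modular-forms machinery. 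Two small points of care: your "finite cover of $S$ by small simply connected opens" must in fact be a finite \emph{disjoint} family exhausting all but $\epsilon$ of the finite measure $\mu$ (overlaps would invalidate the summation of lower bounds; this is exactly the exhaustion the paper performs in its Section 4, and it also handles the possible escape of mass near the punctures); and in the squeeze one should note that for classes $\gamma+h$ with $n\notin A_{\gamma+h}$ both the count and the predicted main term vanish, so they drop out harmlessly. With these provisos your argument delivers both (i) and (ii) just as the paper's does.
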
 
\begin{proof}
Let $\Delta$ be an open simply connected subset of $S$ which satisfies Proposition \ref{local} and let $\gamma\in H^{\bot}$ and $n\in -Q(\gamma)+\Z$. Denote by $\overline{\gamma}$ its image in $H^{\bot}/H\simeq \overline{V}^{\vee}/\overline{V}$. Then 
\begin{align*}
|\Delta\cap \mathcal{Z}_{\overline{V}}(\overline{\gamma},-n)|_{mult}=\sum_{t\in H}|\Delta\cap \mathcal{Z}(\gamma+t,-n)|_{mult}
\end{align*}
where $\mathcal{Z}_{\overline{V}}(\overline{\gamma},-n)$ is the Heegner divisor associated to the lattice $(\overline{V},Q)$, to $\overline{\gamma}$ and $n$. By assumption, Theorem \ref{main} holds for $\{\overline{\mathbb{V}}_{\Z},\mathcal{F}^{\bullet}\mathcal{V},Q\}$ : 
\begin{align*}|\Delta\cap \mathcal{Z}_{\overline{V}}(\overline{\gamma},-n)|_{mult}=-\mu(\Delta)\frac{\overline{c}(\overline{\gamma},n)}{2}+o(n^{\frac{b}{2}}).
\end{align*}
The $\overline{c}(\overline{\gamma},n)$ are the coefficients of the Eisenstein series $E_{\overline{V}}$ constructed out of the lattice $(\overline{V},Q)$ in a similar fashion to Example \ref{eisenstein}. For $t\in H$, we have by Lemma \ref{local} 
$$|\Delta\cap \mathcal{Z}(\gamma+t,-n)|_{mult}\geq -\mu(\Delta)\frac{c(\gamma+t,n)}{2}+o(n^{\frac{b}{2}})$$
Thus
$$|\Delta\cap \mathcal{Z}(\gamma,-n)|_{mult}\leq \frac{\mu(\Delta)}{2}.\left(-\overline{c}(\overline{\gamma},n)+\sum_{t\in H\backslash \{0\}}c(\gamma+t,n)\right)+o(n^{\frac{b}{2}})$$
Using Lemma \ref{curiousrelation} below, we have 
$$|\Delta\cap \mathcal{Z}(\gamma,-n)|_{mult}\leq -\mu(S)\frac{c(\gamma,n)}{2}+o(n^{\frac{b}{2}})$$
Combined with \ref{local}, we get the desired result. 
\end{proof}
\begin{lemma}\label{curiousrelation}
Let $\gamma\in H^{\bot}$, $n\in -Q(\gamma)+\Z$. Then 
$$\sum_{t\in H}c(\gamma+t,n)=\overline{c}(\overline{\gamma},n)+O_\epsilon(n^{\frac{b+2}{4}+\epsilon})$$ 
\end{lemma}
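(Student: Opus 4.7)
My plan is to interpret both sides of the claimed identity as Fourier coefficients of two modular forms in $M_{1+\frac{b}{2}}(\rho_{\overline V}^{*})$ that share the same constant term, and then invoke the standard bound on cusp-form coefficients to control their difference.

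Since $V\subset\overline V\subset V^\vee$ with $\overline V/V\simeq H$, one has $\overline V^\vee = H^\perp$ (the preimage in $V^\vee$ of $H^\perp\subset V^\vee/V$) and $\overline V^\vee/\overline V\simeq H^\perp/H$ as quadratic finite modules, because $H$ is isotropic. Dual to the map
$$p^{*}:\C[\overline V^\vee/\overline V]\longrightarrow\C[V^\vee/V],\qquad v_{\overline\gamma}\longmapsto\sum_{t\in H}v_{\gamma+t},$$
which is an intertwiner of Weil representations $\rho_{\overline V}\to\rho_V$ (by the same computation as after Proposition \ref{brieskorn}, applied to $H\subset V^\vee/V$ in place of the cusp datum there), I would introduce the adjoint
$$q_{*}:\C[V^\vee/V]\longrightarrow\C[\overline V^\vee/\overline V],\qquad q_{*}(v_\beta)=\begin{cases} v_{\overline\beta} & \text{if }\beta\in H^\perp,\\ 0 & \text{otherwise.}\end{cases}$$
Being the adjoint of $p^{*}$ with respect to the standard inner product, $q_{*}$ is $\mathrm{Mp}_2(\Z)$-equivariant for $\rho_V^{*}\to\rho_{\overline V}^{*}$, hence induces a linear map $q_{*}:M_{1+\frac{b}{2}}(\rho_V^{*})\to M_{1+\frac{b}{2}}(\rho_{\overline V}^{*})$. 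By construction, the coefficient of $q^{n}v_{\overline\gamma}$ in $q_{*}E_{V}$ is precisely $\sum_{t\in H}c(\gamma+t,n)$ for any chosen lift $\gamma\in H^\perp$.

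Next I would compare constant terms. For $\overline\gamma=\overline 0$, the $q^{0}$-coefficient of $q_{*}E_{V}$ equals $\sum_{t\in H}c(t,0)=c(0,0)=2$, since $c(\beta,0)=0$ for $\beta\neq 0$ by Example \ref{eisenstein}. For $\overline\gamma\neq \overline 0$, no translate $\gamma+t$ vanishes in $V^\vee/V$, so the $q^{0}$-coefficient is $0$. Hence $q_{*}E_{V}$ and $E_{\overline V}$ have the same constant term $2v_{\overline 0}$. Using the decomposition
$$M_{1+\frac{b}{2}}(\rho_{\overline V}^{*})_{\Q}=\bigoplus_{\overline\gamma}\C\cdot E_{\overline\gamma}\oplus S_{1+\frac{b}{2}}(\rho_{\overline V}^{*})_{\Q}$$
recalled in Section \ref{heegnerdivisors}, the Eisenstein projection of a modular form is determined by its constant term, so $q_{*}E_{V}-E_{\overline V}\in S_{1+\frac{b}{2}}(\rho_{\overline V}^{*})$. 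Applying the trivial cusp-form bound $|a_{\overline\gamma,n}(f)|\leq C_{\epsilon}n^{\frac{2+b}{4}+\epsilon}$ (\cite[Prop. 1.5.5]{sarnak}) to the $(\overline\gamma,n)$-coefficient of this cusp form yields the desired estimate.

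The only delicate step is verifying that $q_{*}$ is indeed $\mathrm{Mp}_2(\Z)$-equivariant; this reduces to the same finite quadratic-module computation used in Section \ref{technical}, essentially because the identification $\overline V^\vee/\overline V\simeq H^\perp/H$ automatically respects the Weil representations on both sides once $H$ is isotropic. Everything else is a matter of tracking Fourier coefficients and invoking the already established cusp-form bound.
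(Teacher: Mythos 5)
Your argument is essentially the paper's own proof: you construct the same two intertwining maps (your $q_{*}$ is the paper's $p_{*}$), observe that the difference of Eisenstein series is cuspidal by comparing constant terms, and apply the Hecke bound to its coefficients. The only cosmetic difference is that you work with $q_{*}E_{V}-E_{\overline{V}}$ in $M_{1+\frac{b}{2}}(\rho_{\overline{V}}^{*})$, whereas the paper pulls back once more and uses $p^{*}E_{\overline{V}}-p^{*}p_{*}E_{V}$; the coefficient identification and the estimate are the same.
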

\begin{proof}
Let $p:H^{\bot}\rightarrow H^{\bot}/H\simeq \overline{V}^{\vee}/\overline{V}$. Then $p$ induces two morphisms 
\begin{align*}
p_{*}:\, \C[V^{\vee}/V]&\rightarrow  \C[\overline{V}^\vee/\overline{V}]\\
			v_{\gamma}&\mapsto v_{p(\gamma)}\, \textrm{if}\, \gamma \in H^{\bot},\, 0 \, \textrm{otherwise}.						
\end{align*}
and
\begin{align*}
 p^{*}:\, \C[\overline{V}^\vee/\overline{V}]&\rightarrow \C[V^\vee/V]\\
			v_{\delta}&\mapsto \sum_{\gamma\in H^{\bot},\,p(\gamma)=\delta}v_{\gamma}			 
\end{align*}
 which commutes with the Weil representation on both sides. 
 Hence we have two $\C$-linear map: $p_*:\mathrm{M}_{1+\frac{b}{2}}(\rho_{V}^{*}) \rightarrow\mathrm{M}_{1+\frac{b}{2}}(\rho_{\overline{V}}^{*}) $ and $p^*:\mathrm{M}_{1+\frac{b}{2}}(\rho_{\overline{V}}^{*})\rightarrow \mathrm{M}_{1+\frac{b}{2}}(\rho_{V}^{*})$.
The modular form $p^*E_{\overline{V}}-p^*p_*E_{V}$ is then a cuspidal form and Lemma \ref{curiousrelation} follows by identifying its coefficients. 
\end{proof}

\subsection{An upper bound}
By Theorem \ref{reduction}, we may assume that $V^{\vee}/V$ has no non-trivial totally isotropic subgroup in order the prove Theorem \ref{main}. Hence all the primitive isotropic planes of $V$ are strongly primitive (see the definition preceding Proposition \ref{brieskorn}). Let $\overline{S}$ be a smooth compactification of $S$ which fits in the following commutative diagram   
\begin{center}
\begin{tikzpicture}[scale=1]
\node (s) at (0,0) {$S$};
\node (s1) at (0,-2) {$\overline{S}$} ;
\node (d) at (2,0) {$ \Gamma_{V}\backslash D_{V}$};
\node (d') at (2,-2) {$\overline{\Gamma_{V}\backslash D_{V}}^{tor}$};

\draw[->,>=latex] (s)--(s1);
\draw[->] (s)--node[above] {$\rho$}(d);
\draw[->,>=latex] (d)--(d');
\draw[->] (s1)--node[above] {$\overline{\rho}$}(d');

\end{tikzpicture}
\end{center}
\bigskip 

The boundary $\overline{S}\backslash S$ is  finite. Let $\Delta_{0}$ be a finite union of open subsets of $\overline{S}$  around each of those points. Consider $\Delta$ an  open subset in $S\backslash \Delta_{0}$ which satisfies lemma \ref{local}. We can find a finite disjoint family of open  subsets $(\Delta_{i})_{i\in I}$ included in $S$ which satisfy lemma \ref{local} and such that $\mu(S)=\mu(\Delta)+\mu(\Delta_{0})+\sum_{i\in I}\mu(\Delta_{i})$. For each $i\in I$, we have:
$$\liminf_{n} \frac{|\Delta_i\cap \mathcal{Z}(\gamma,-n)|_{mult}}{n^{\frac{b}{2}}\prod_{p<\infty}\mu_{p}(\gamma,n,V)}\geq\frac{{(2\pi)^{1+\frac{b}{2}}}}{\sqrt{|V^{\vee}/V|}\Gamma(1+\frac{b}{2})} \mu(\Delta_{i}),$$
for $\gamma\in V^{\vee}/V$ and $n\in -Q(\gamma)+\Z$ satisfying local congruence conditions. 
Also, by lemma \ref{int}
\begin{align*}
|\Delta\cap \mathcal{Z}(\gamma,-n)|_{mult} \leq \deg_{\overline{S}}(\overline{\rho}^{*}\overline{\mathcal{Z}(\gamma,-n)})-\sum_{i\in I}|\Delta_{i}\cap \mathcal{Z}(\gamma,-n)|_{mult}
\end{align*}
Hence by Corollary \ref{sharpestimate}
\begin{align*}
\limsup_n \frac{|\Delta\cap \mathcal{Z}(\gamma,-n)|_{mult}}{n^{\frac{b}{2}}\prod_{p<\infty}\mu_{p}(\gamma,n,V)}&\leq\frac{{(2\pi)^{1+\frac{b}{2}}}\left(\mu(S)-\sum_{i}\mu(\Delta_{i})\right)}{\sqrt{|V^{\vee}/V|}\Gamma(1+\frac{b}{2})} \\
&\leq\frac{{(2\pi)^{1+\frac{b}{2}}}\left(\mu(\Delta)+\mu(\Delta_{0})\right)}{\sqrt{|V^{\vee}/V|}\Gamma(1+\frac{b}{2})}.
\end{align*}
Since the volume of $\Delta_{0}$ can be chosen arbitrarily small, we deduce that 
$$ \limsup_n \frac{|\Delta\cap \mathcal{Z}(\gamma,-n)|_{mult}}{n^{\frac{b}{2}}\prod_{p<\infty}\mu_{p}(\gamma,n,V)}\leq \frac{{(2\pi)^{1+\frac{b}{2}}}}{\sqrt{|V^{\vee}/V|}\Gamma(1+\frac{b}{2})}\mu(\Delta).$$
Combined with Lemma \ref{local}, this yields the desired equidistribution result.  

\subsection{Elliptic fibrations in families of K3 surfaces}
We now derive some equidistribution results in quasi-polarized families of K3 surfaces. We begin by some background on K3 surfaces. The main references are \cite{huybrechts} and \cite{barth}.

\bigskip

Let $X$ be a K3 surface. The second cohomology group with integer coefficients of $X$ endowed with  its intersection form $(.,.)$ is an even unimodular lattice of signature $(3,19)$, hence isomorphic abstractly to the {\it K3 lattice}  
$$\Lambda_{K3}=U^{\oplus 3}\oplus E_{8}(-1)^{\oplus 2},$$
where $U$ is the hyperbolic lattice and $E_{8}$ the unique definite positive even unimodular lattice of rank $8$, up to isomorphism. Denote by $Q=\frac{(\cdot.\cdot)}{2}$ the associated quadratic form. 

\begin{definition}
An elliptic K3 surface is a projective K3 surface $X$ together with a surjective morphism $\pi\,:X\rightarrow \mathbb{P}^{1}$ such the generic fiber is a smooth integral curve of genus one. 
\end{definition}

Recall that there  is an elliptic fibration on $X$ if, and only if there exists a {\it parabolic line bundle} on $X$, i.e a non-trivial line bundle $L$ with $(L.L)=0$. Indeed, if $X$ admits an elliptic fibration $\pi:\,X\rightarrow \P^1$, then the class of a fiber gives a non-trivial element $e\in \mathrm{Pic}(X)$ such that $(e,e)=0$. 
Conversely, let $L$ be a non-trivial line bundle  with square zero. Either $L$ or $L^{-1}$ is effective by Riemann-Roch. Assume $L$ is effective. In \cite[{\bf 8}.2.13]{huybrechts}, it is shown that up to acting on $L$ by the Weyl group of $X$, we can assume that $L$ is nef of square zero. Then [{\bf 2}.3.10, {\it loc.cit.}] shows that $\pi_{V}:X\rightarrow \P(\mathrm{H}^{0}(X,L)^{\vee})$ factors through $\P^1$ and induces an elliptic fibration whose fiber class is equal to $L$.

Let $P\subset \Lambda_{K3}$ be a primitive Lorentzian anistropic sublattice of rank $\rho\leq 4$ and let $V=P^{\bot}$. Then $(V,Q)$ is an even quadratic lattice of signature $(2,20-\rho)$ and we have an isomorphism of quadratic finite modules $(V^{\vee}/V,Q)\simeq(P^{\vee}/P,-Q)$ (see\cite[Prop.{\bf 14}.0.2]{huybrechts}). Recall that a $P$-K3 surface is a  K3 surface $X$  with a fixed primitive embedding $P\rightarrow \mathrm{Pic}(X)$ such that the image of $P$ contains a quasi-polarization $\ell$. 
If $L\in\mathrm{Pic}(X)$ is of square $0$, we can write $L=L_P+L_V$ where $L_{P}\in P^{\vee}$ and $L_V\in V^{\vee}$. Then $(L.L)=(L_P.L_P)+(L_V.L_V)$  and $(L_V.L_V)\leq 0$ since the restriction of the form $(\,,\,)$ to $V$ is negative definite. Hence $(L_P.L_P)>0$,  unless $L=L_P$, which is excluded since $P$ is assumed to be anisotropic.  
\begin{definition}\label{degree}
Let $X$ be a $P$-K3 surface, $\gamma\in P^{\vee}/P$ and $n\in Q(\gamma)+\Z$. A parabolic line bundle $L$ on $X$ is said to be of type $(\gamma,n)$ if $L_P\in \gamma+P$ and $(L_P.L_P)=2n$. An elliptic fibration is said to be of type $(\gamma,n)$  if a line bundle defining the fibration is so. We call $n$ the  norm of the elliptic fibration. 
\end{definition}
We are now in the setting of Section \ref{general} and we follow its notations, namely $D_{V}$ is  the period domain associated to the lattice $(V,Q)$ and $\mathcal{Z}(\gamma,n)$ is the Heegner divisors associated to $\gamma\in V^{\vee}/V$ and $n\in Q(\gamma)+\Z$.
\begin{proposition}\label{intersection}
Let $X$ be a $P$-K3 surface, $\gamma\in P^{\vee}/P$ and $n\in Q(\gamma)+\Z$. Then $X$ admits a parabolic line bundle of type $(\gamma,n)$ if and only if there exists $t\in \gamma+P$ such that $(t.t)=2n$ and the period of $X$ lies on the Heegner divisor $\mathcal{Z}(\gamma,-n)$.
\end{proposition}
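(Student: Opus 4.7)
The plan is to prove both directions of Proposition~\ref{intersection} by splitting classes along the orthogonal decomposition $\Lambda_{K3}\otimes\Q = P_\Q \oplus V_\Q$ and exploiting the canonical anti-isometry $\phi\colon P^\vee/P\to V^\vee/V$ induced by the unimodularity of $\Lambda_{K3}$ (see \cite[Prop.~\textbf{14}.0.2]{huybrechts}), under which $Q_P$ corresponds to $-Q_V$. The key elementary fact I would record at the outset is that an element $x_P+x_V\in P^\vee\oplus V^\vee$ lies in $\Lambda_{K3}$ if and only if the class of $x_V$ in $V^\vee/V$ equals $\phi$ of the class of $x_P$. Throughout, the single symbol $\gamma$ in the proposition refers to an element of $P^\vee/P$ in the condition on $t$ and to its image $\phi(\gamma)$ in the Heegner divisor condition.

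For the direct implication, I would take a parabolic line bundle $L$ of type $(\gamma,n)$ and decompose $L=L_P+L_V$ with $L_P\in P^\vee$ and $L_V\in V^\vee$. By definition $t:=L_P\in\gamma+P$ and $(t.t)=2n$, which is precisely the first condition. The identity $(L.L)=0$ then forces $(L_V.L_V)=-2n$. Since $P\subset\mathrm{Pic}(X)$ is purely of Hodge type $(1,1)$, the orthogonal $V\otimes\C$ inherits a sub-Hodge structure of weight $2$ containing $H^{2,0}(X)$, and the fact that $L\in H^{1,1}$ together with $L_P\in P_\Q\subset H^{1,1}$ forces $L_V$ to be a rational Hodge class. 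By the characterisation of $\Lambda_{K3}$-classes recalled above, $L_V$ has class $\phi(\gamma)$ in $V^\vee/V$, so the period of $X$ belongs to $\mathcal{Z}(\gamma,-n)$.

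For the converse, the period condition provides a rational $(1,1)$-class $\lambda\in\gamma+V$ (with $\gamma$ now meaning $\phi(\gamma)$) satisfying $(\lambda.\lambda)=-2n$, and I set $L:=t+\lambda\in P^\vee\oplus V^\vee$. Three points then need to be checked: (a) $L\in\Lambda_{K3}$, which is immediate from the characterisation above since $t$ and $\lambda$ have classes $\gamma$ and $\phi(\gamma)$ in their respective discriminant groups; (b) $(L.L)=2n+(-2n)=0$, so $L$ is parabolic; (c) $L$ is of Hodge type $(1,1)$, because $t\in P_\Q\subset H^{1,1}$ and $\lambda$ is $(1,1)$ by construction. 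Thus $L\in H^{1,1}\cap H^2(X,\Z)=\mathrm{Pic}(X)$, and $L$ is non-trivial since $n>0$ forces $t\neq 0$; by construction $L$ is of type $(\gamma,n)$.

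The main obstacle is purely a matter of conventions: one must keep track of the fact that the single symbol $\gamma$ is used on either side of the anti-isometry $\phi$, and verify the compatibility $n\in Q_P(\gamma)+\Z \iff -n\in Q_V(\phi(\gamma))+\Z$ which is precisely what makes the two halves of the statement refer coherently to the same integer $n$. Once these identifications are pinned down, both implications reduce to elementary linear and Hodge-theoretic manipulations performed on the orthogonal decomposition $\Lambda_{K3}\otimes\Q=P_\Q\oplus V_\Q$.
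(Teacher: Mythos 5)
Your proposal is correct and follows essentially the same route as the paper: decompose $L=L_P+L_V$ along $\Lambda_{K3}\otimes\Q=P_\Q\oplus V_\Q$ for the forward direction, and set $L=t+\lambda$ for the converse, using the identification $(V^{\vee}/V,Q)\simeq(P^{\vee}/P,-Q)$. You merely spell out details the paper leaves implicit (the gluing criterion for integrality of $t+\lambda$ in $\Lambda_{K3}$, the Hodge-type check, and the bookkeeping of $\gamma$ across the anti-isometry), which is harmless and arguably clarifying.
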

\begin{proof}
Let $L$ be a line bundle on $X$ defining an elliptic fibration of type $(\gamma,n)$. Write $L=L_P+L_V$ as above. Then the element $L_V\in \gamma+V$ satisfies $(L_V.L_V)=-2n$ and take $t=L_P$. Hence, the period of $X$ lies on the Heegner divisor $\mathcal{Z}(\gamma,-n)$.
Conversely, if the period of $X$ lies in $\mathcal{Z}(\gamma,-n)$, then there exists $\lambda\in H^{1,1}(X)\cap (\gamma+V)$ such that $(\lambda,\lambda)=-2n$. By assumption, there exists $t\in \gamma+V$ such that $(t.t)=2n$. Then  $L=\lambda+t\in \mathrm{Pic}(X)$ is of square zero and non-trivial.
\end{proof}
\begin{proposition}\label{intersection3}
Let $X$ be a $P$-K3 surface. Then $X$ admits an elliptic fibration of  norm less than $n$ if and only if the period of $X$ lies on the union of the Heegner divisors $\mathcal{Z}(\gamma,-s)$ for $\gamma\in P^{\vee}/P$ and $s\in ]0,n]$ represented by $Q$ in $\gamma+P$.
\end{proposition}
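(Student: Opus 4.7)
The proposition is essentially a union/repackaging of Proposition \ref{intersection}, together with the equivalence between elliptic fibrations and parabolic line bundles recalled after the definition of an elliptic K3 surface. My plan would be to simply unpack the definitions and quantify over $s$.

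First, I would recall from the discussion preceding Definition \ref{degree} that on a projective K3 surface, non-trivial parabolic line bundles and elliptic fibrations correspond to each other (up to the action of the Weyl group): from such an $L$, one uses \cite[{\bf 8}.2.13]{huybrechts} to replace $L$ by a nef representative, and then \cite[{\bf 2}.3.10]{huybrechts} to show that the associated map factors through $\mathbb{P}^{1}$ and induces the elliptic fibration. Under this correspondence, an elliptic fibration of norm $s$ is exactly one coming from a parabolic line bundle $L$ whose component $L_{P}\in P^{\vee}$ satisfies $(L_{P}.L_{P})=2s$, i.e.\ of type $(\gamma,s)$ for the class $\gamma\in P^{\vee}/P$ of $L_{P}$.

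Next, for the forward implication, I would assume $X$ carries an elliptic fibration of norm $s\in\,]0,n]$. Then $X$ admits a parabolic line bundle of type $(\gamma,s)$ for some $\gamma\in P^{\vee}/P$, and Proposition \ref{intersection} gives both that $s$ is represented by $Q$ on $\gamma+P$ (taking $t=L_{P}$) and that the period of $X$ lies on $\mathcal{Z}(\gamma,-s)$, hence in the claimed union. Conversely, if the period of $X$ lies on some $\mathcal{Z}(\gamma,-s)$ with $s\in\,]0,n]$ represented by $Q$ on $\gamma+P$, then Proposition \ref{intersection} produces a parabolic line bundle on $X$ of type $(\gamma,s)$, which by the correspondence above yields an elliptic fibration of norm $s\leq n$.

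There is no real obstacle; the only points requiring minor care are matching the sign conventions between $Q_{|V}$ (negative definite) and $Q_{|P}$ (of mixed signature, with $(V^{\vee}/V,Q)\simeq(P^{\vee}/P,-Q)$), and checking that the $s=0$ case is correctly excluded (it corresponds to a trivial $L_{P}$, hence to $L=L_{V}$ with $(L.L)\leq 0$, which by anisotropy of $P$ forces $L=0$ and so no elliptic fibration). This last observation justifies restricting to $s\in\,]0,n]$ in the statement.
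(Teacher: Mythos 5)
Your forward direction and the exclusion of $s=0$ via anisotropy of $P$ match the paper. The gap is in the converse. Proposition \ref{intersection} hands you a parabolic line bundle $L$ of type $(\gamma,s)$, but the norm of an elliptic fibration is, by Definition \ref{degree}, computed from a line bundle \emph{defining} the fibration, i.e.\ from the nef fiber class. Your first paragraph asserts that "an elliptic fibration of norm $s$ is exactly one coming from a parabolic line bundle of type $(\gamma,s)$", which conflates the type of an arbitrary parabolic class with the type of the nef class obtained from it: to pass from $L$ to a nef class one must apply reflections $s_{C}(L)=L+(L.C)C$ in $(-2)$-curves $C$, and at a point of the Heegner divisor $\mathrm{Pic}(X)$ is strictly larger than $P$, so such $C$ generally has nontrivial components in both $P^{\vee}$ and $V^{\vee}$; the reflection therefore changes $L_{P}$, hence the type and a priori the norm. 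If the norm could grow under these reflections, the resulting fibration might have norm larger than $n$ and the converse would fail.

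This is exactly the point the paper's proof addresses and yours does not: it checks that each reflection $s_{C}(L)$ is again parabolic, has positive intersection with $C$, and has norm no larger than that of $L$, so that after finitely many Weyl reflections one reaches a nef parabolic class still of norm at most $n$, which then defines the desired elliptic fibration. To repair your argument you would need to insert this monotonicity-of-the-norm step (or some substitute controlling how the type $(\gamma,s)$ behaves under the Weyl group); citing the correspondence "parabolic line bundle $\Leftrightarrow$ elliptic fibration" alone does not keep track of the norm.
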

\begin{proof}
The forward direction is clear. For the converse, we can construct a parabolic line bundle $L$ on $X$ of norm less than $n$ in the same way as it was done above. If $L$ is nef, then $L$ defines an elliptic fibration of degree less than $n$ and we are done. Otherwise, there exists a $-2$-curve $C$ such that $(L.C)<0$. Then $s_C(L):=L+(L,C).C$ is a parabolic line bundle with positive intersection with $C$ and  of norm less than the norm of $L$. We repeat the process if $s_{C}(L)$ is not nef. After a finite number of actions by the Weil group, we get a nef line bundle. 
\end{proof}

\begin{proof}[Proof of corollary \ref{k3}]
Let  $\mathcal{X}\xrightarrow{\pi}S$ be a non-isotrivial family of K3 surfaces  with generic Picard group equal to $P$. The orthogonal to $\underline{P}_{\mathcal{X}}$ in $R^{2}\pi_{*}\underline{\Z}_{\mathcal{X}} $ defines a polarized variation of Hodge structure of weight $2$ over $S$ with fibers isomorphic to the lattice $(V,Q)$ and to which we can apply Theorem \ref{main}. Using Proposition \ref{intersection}, this proves $(i)$ and $(ii)$. For $(iii)$, let $\Delta \subset S$ an open subset and $\widetilde{N}(n,\Delta)$ the number of $s\in \Delta$ (counted with multiplicity) for which $\mathcal{X}_{s}$ admits an elliptic fibration of norm less than $n$. Then by Proposition \ref{intersection3} and Theorem \ref{main} we have
 \begin{align*}
 \frac{\widetilde{N}(n,\Delta)}{\widetilde{N}(n,S)}=\frac{\sum_{\gamma\in V^{\vee}/V}\sum_{s\leq n, s\in Q(\gamma+P)} |\Delta\cap Z(\gamma,-s)|_{mult}}{\sum_{\gamma\in V^{\vee}/V}\sum_{s\leq n, s\in Q(\gamma+P)} |S\cap Z(\gamma,-s)|_{mult}}
 \underset{n\rightarrow \infty}{\longrightarrow} \frac{\mu(\Delta)}{\mu(S)}
 \end{align*}
\end{proof}

\begin{remarque}{\normalfont There is an analogous result which concerns families of hyperkähler manifolds and which we state below. See \cite{hk} for definitions. Indeed, given a hyperkähler manifold, the {\it Beauville-Bogomolov-Fujiki} form, defined in \cite{beauville1}, endows its second integral Betti cohomology group with a structure of a lattice of signature $(3,b_{2}-3)$.}\end{remarque}
\begin{corollaire}\label{hyperkähler}
Let $d$ be an integer. Let $(\mathcal{X},\mathcal{L}_{2d})\rightarrow S$ be a non-isotrivial, split quasi-polarized family of hyperkähler manifolds of degree $2d$ over a quasi-projective curve $S$ with generic Picard rank equal to $1$  and let $\{R^{2}\pi_{*}\underline{\Z}_{\mathcal{X}},\mathcal{F}^{\bullet}\mathcal{H}\}$ be the induced variation of Hodge structure over $S$. Let $\mu$ be the measure induced by integrating the first Chern class of $\mathcal{F}^{2}\mathcal{H}$. Then the set of points $s\in S$ for which $\mathcal{X}_{s}$ admits a  parabolic line bundle $L$ such that $(L.\mathcal{L}_{2d,s})=2dn$  becomes equidistributed in $S$ with respect to $\mu$ as $n\rightarrow +\infty$.
\end{corollaire}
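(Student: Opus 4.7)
The plan is to reduce Corollary~\ref{hyperkähler} to Theorem~\ref{main} applied to the orthogonal complement of the quasi-polarization inside the second cohomology lattice equipped with the Beauville--Bogomolov--Fujiki form, in direct analogy with the K3 case treated in Corollary~\ref{k3}.

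\medskip

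First I would set up the lattice picture. Fix a general fiber and let $\Lambda = H^{2}(\mathcal{X}_{s},\Z)$ endowed with the BBF form; this is an even lattice of signature $(3,b_{2}-3)$. The assumption that the generic Picard rank is $1$ identifies the generic Picard lattice with $P = \Z\cdot\mathcal{L}_{2d}$, where $(\mathcal{L}_{2d},\mathcal{L}_{2d}) = 2d$, and the splitness of the quasi-polarization translates into an orthogonal decomposition $\Lambda = P \oplus V$ with $V = P^{\perp}$ of signature $(2,b_{2}-4)$. Taking the orthogonal to $\underline{P}$ inside $R^{2}\pi_{*}\underline{\Z}_{\mathcal{X}}$ produces an integral, polarized variation of Hodge structure of weight $2$ over $S$ with $h^{2,0}=1$ and fibers isomorphic to $(V,Q)$; this VHS is simple and non-trivial since the Picard rank is generically $1$ and the family is non-isotrivial, and its discriminant local system is trivial as a consequence of the splitting (the monodromy preserves $\mathcal{L}_{2d}$ and therefore factors through the stable orthogonal group $\Gamma_{V}$).

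\medskip

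Next I would translate the arithmetic condition into a Heegner divisor intersection. Because $\Lambda$ splits as $P \oplus V$, every line bundle $L \in \mathrm{Pic}(\mathcal{X}_{s})$ decomposes uniquely as $L = L_{P}+L_{V}$ with $L_{P}\in P$ and $L_{V}\in V$. Writing $L_{P} = k\cdot\mathcal{L}_{2d}$, the relation $(L,\mathcal{L}_{2d,s}) = 2dn$ forces $k = n$, hence $L_{P}^{2} = 2dn^{2}$, and the parabolicity $L^{2}=0$ becomes $(L_{V},L_{V}) = -2dn^{2}$ with $L_{V}$ a $(1,1)$-class. The locus in question therefore coincides, with multiplicity, with the pullback of the Heegner divisor $\mathcal{Z}(0,-dn^{2}) \subset \Gamma_{V}\backslash D_{V}$ under the period map $\rho\colon S \to \Gamma_{V}\backslash D_{V}$ of the VHS above, in exact analogy with Proposition~\ref{intersection}.

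\medskip

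Finally I would invoke Theorem~\ref{main} with $\gamma = 0$. Every known deformation type of hyperkähler manifolds satisfies $b_{2} \geq 7$, so $b := b_{2}-4 \geq 3$ and the hypotheses of Theorem~\ref{main} are in force; the local congruence conditions of Example~\ref{eisenstein} are automatic when $\gamma = 0$ since $-Q$ is indefinite of rank at least $5$, hence $dn^{2} \in A_{0}$ for all sufficiently large $n$. Theorem~\ref{main}(ii) yields the equidistribution of the points of $\rho^{-1}(\mathcal{Z}(0,-m))$, counted with intersection multiplicity, as $m \to \infty$ along $A_{0}$. Specializing to the subsequence $m = dn^{2}$ gives exactly the equidistribution claimed in Corollary~\ref{hyperkähler}.

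\medskip

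The main obstacle is making the splitting hypothesis carry its weight. It is precisely what permits the clean decomposition $L = L_{P}+L_{V}$ with integral components and the identification of the target locus with the single Heegner divisor $\mathcal{Z}(0,-dn^{2})$ rather than a union $\bigcup_{\gamma \in H}\mathcal{Z}(\gamma,-dn^{2}+Q(\gamma))$ indexed by an isotropic subgroup $H \subset V^{\vee}/V$ of gluings. Were the splitting to fail, one would need to mimic Proposition~\ref{reduction}, passing to a finite étale cover on which $V^{\vee}/V$ trivializes and summing over cosets of $H$; the equidistribution would still follow from Theorem~\ref{main}, but at the cost of additional discriminant bookkeeping in the asymptotic constant.
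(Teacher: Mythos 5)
Your overall route is exactly the one the paper intends: no separate argument is given for this corollary, and it is meant to follow from Theorem \ref{main} just as Corollary \ref{k3} does, by applying it to the orthogonal complement of the quasi-polarization inside $R^{2}\pi_{*}\underline{\Z}_{\mathcal{X}}$ and converting the parabolic condition into the period lying on a Heegner divisor with $\gamma=0$ and norm $-dn^{2}$. However, two of your intermediate steps are wrong as stated, even though the conclusions you draw from them survive for other reasons. First, splitness does \emph{not} give an orthogonal decomposition $\Lambda=P\oplus V$ with $P=\Z\,\mathcal{L}_{2d}$: such a splitting would force $\mathcal{L}_{2d}$ to have divisibility $2d$ in $\Lambda$, and it already fails for the unimodular K3 lattice, where $\Z h\oplus h^{\perp}$ has index $2d$. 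In the cited reference, ``split'' means that $h^{\perp}$ splits off a $\langle -2d\rangle$ summand (equivalently $h=e+df$ inside a hyperbolic plane), essentially the opposite situation. Fortunately your identification of the locus with $\rho^{*}\mathcal{Z}(0,-dn^{2})$ needs no splitting of $\Lambda$ at all: since $(L,\mathcal{L}_{2d,s})=2dn=n\,(\mathcal{L}_{2d,s},\mathcal{L}_{2d,s})$, the class $L-n\mathcal{L}_{2d,s}$ is integral and orthogonal to $\mathcal{L}_{2d,s}$, hence lies in $V$ with $Q=-dn^{2}$, so $\gamma=0$ automatically.

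Second, the claim that $dn^{2}\in A_{0}$ for large $n$ ``by Hasse--Minkowski'' is a genuine gap: Hasse--Minkowski gives representability over $\Q$, whereas $A_{0}=\{-Q(x),\,x\in V\}$ requires representation by the lattice $V$, and an indefinite even lattice of rank $\geq 5$ can have persistent congruence obstructions (a rescaled lattice represents only even numbers, for instance). This is precisely where the split hypothesis earns its keep: it guarantees a hyperbolic plane inside $\mathcal{L}_{2d}^{\perp}$, so $-Q$ represents every positive integer and all the divisors $\mathcal{Z}(0,-dn^{2})$ are nonempty. Two smaller points: the signature of $V$ is $(2,b_{2}-3)$, not $(2,b_{2}-4)$ (harmless, since $b_{2}-3\geq 3$ for all known deformation types); and ``the monodromy preserves $\mathcal{L}_{2d}$, hence factors through $\Gamma_{V}$'' is not automatic when $\Lambda$ is not unimodular --- an isometry fixing $\mathcal{L}_{2d}$ need only act trivially on the glue subgroup of $V^{\vee}/V$, so one should either invoke Markman-type monodromy restrictions together with splitness, or, since only equidistribution is claimed, pass to a finite étale cover trivializing $\mathbb{V}_{\Z}^{\vee}/\mathbb{V}_{\Z}$ as the paper does for its first corollary and note that equidistribution descends to $S$.
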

For the definition of split polarization, we refer to \cite[Definition 3.9]{gritsenko1}.

\bibliographystyle{alpha}
\bibliography{bibliographie}

\begin{thebibliography}{BHPVdV04}

\bibitem[BB66]{bailyborel}
W.~L. Baily, Jr. and A.~Borel.
\newblock Compactification of arithmetic quotients of bounded symmetric
  domains.
\newblock {\em Ann. of Math. (2)}, 84:442--528, 1966.

\bibitem[BD08]{browning}
T.~D. Browning and R.~Dietmann.
\newblock On the representation of integers by quadratic forms.
\newblock {\em Proc. Lond. Math. Soc. (3)}, 96(2):389--416, 2008.

\bibitem[Bea83]{beauville1}
Arnaud Beauville.
\newblock Vari\'et\'es {K}\"ahleriennes dont la premi\`ere classe de {C}hern
  est nulle.
\newblock {\em J. Differential Geom.}, 18(4):755--782, 1983.

\bibitem[BHPVdV04]{barth}
Wolf~P. Barth, Klaus Hulek, Chris A.~M. Peters, and Antonius Van~de Ven.
\newblock {\em Compact complex surfaces}, volume~4 of {\em Ergebnisse der
  Mathematik und ihrer Grenzgebiete. 3. Folge. A Series of Modern Surveys in
  Mathematics [Results in Mathematics and Related Areas. 3rd Series. A Series
  of Modern Surveys in Mathematics]}.
\newblock Springer-Verlag, Berlin, second edition, 2004.

\bibitem[BJ06]{borelli}
Armand Borel and Lizhen Ji.
\newblock {\em Compactifications of symmetric and locally symmetric spaces}.
\newblock Mathematics: Theory \& Applications. Birkh\"auser Boston, Inc.,
  Boston, MA, 2006.

\bibitem[BK01]{bruinierkuss}
Jan~Hendrik Bruinier and Michael Kuss.
\newblock Eisenstein series attached to lattices and modular forms on
  orthogonal groups.
\newblock {\em Manuscripta Math.}, 106(4):443--459, 2001.

\bibitem[BKPSB98]{bkpsb}
Richard~E. Borcherds, Ludmil Katzarkov, Tony Pantev, and N.~I. Shepherd-Barron.
\newblock Families of {$K3$} surfaces.
\newblock {\em J. Algebraic Geom.}, 7(1):183--193, 1998.

\bibitem[BM17]{bergeron}
N.~{Bergeron} and C.~{Matheus}.
\newblock {On special Lagrangian fibrations in generic twistor families of K3
  surfaces}.
\newblock {\em ArXiv e-prints}, March 2017.

\bibitem[Bor72]{borelmetric}
Armand Borel.
\newblock Some metric properties of arithmetic quotients of symmetric spaces
  and an extension theorem.
\newblock {\em J. Differential Geometry}, 6:543--560, 1972.
\newblock Collection of articles dedicated to S. S. Chern and D. C. Spencer on
  their sixtieth birthdays.

\bibitem[Bor98]{borcherds}
Richard~E. Borcherds.
\newblock Automorphic forms with singularities on {G}rassmannians.
\newblock {\em Invent. Math.}, 132(3):491--562, 1998.

\bibitem[Bor99]{borcherdszagier}
Richard~E. Borcherds.
\newblock The {G}ross-{K}ohnen-{Z}agier theorem in higher dimensions.
\newblock {\em Duke Math. J.}, 97(2):219--233, 1999.

\bibitem[Bri83]{brieskorn}
E.~Brieskorn.
\newblock {\em Die {M}ilnorgitter der exzeptionellen unimodularen
  {S}ingularit\"aten}, volume 150 of {\em Bonner Mathematische Schriften [Bonn
  Mathematical Publications]}.
\newblock Universit\"at Bonn, Mathematisches Institut, Bonn, 1983.

\bibitem[Bru02]{bruinier}
Jan~H. Bruinier.
\newblock {\em Borcherds products on {O}(2, {$l$}) and {C}hern classes of
  {H}eegner divisors}, volume 1780 of {\em Lecture Notes in Mathematics}.
\newblock Springer-Verlag, Berlin, 2002.

\bibitem[Cha18]{charles1}
François Charles.
\newblock Exceptional isogenies between reductions of pairs of elliptic curves.
\newblock {\em Duke Math. J.}, 167(11):2039--2072, 08 2018.

\bibitem[CMSP03]{carlson}
J.~Carlson, S.~M{\"u}ller-Stach, and C.~Peters.
\newblock {\em Period Mappings and Period Domains}.
\newblock Cambridge Studies in Advanced Mathematics. Cambridge University
  Press, 2003.

\bibitem[CU05]{clozelullmo}
Laurent Clozel and Emmanuel Ullmo.
\newblock Équidistribution de sous-vari\'et\'es sp\'eciales.
\newblock {\em Ann. of Math. (2)}, 161(3):1571--1588, 2005.

\bibitem[Del72]{deligne}
Pierre Deligne.
\newblock La conjecture de {W}eil pour les surfaces {$K3$}.
\newblock {\em Invent. Math.}, 15:206--226, 1972.

\bibitem[EO06]{eskinoh}
Alex Eskin and Hee Oh.
\newblock Representations of integers by an invariant polynomial and unipotent
  flows.
\newblock {\em Duke Math. J.}, 135(3):481--506, 2006.

\bibitem[ERS91]{eskinsarnak}
Alex Eskin, Ze\'ev Rudnick, and Peter Sarnak.
\newblock A proof of {S}iegel's weight formula.
\newblock {\em Internat. Math. Res. Notices}, (5):65--69, 1991.

\bibitem[{Fil}16]{filip}
S.~{Filip}.
\newblock {Counting special Lagrangian fibrations in twistor families of K3
  surfaces}.
\newblock {\em ArXiv e-prints}, December 2016.

\bibitem[GHS10]{gritsenko1}
V.~Gritsenko, K.~Hulek, and G.~K. Sankaran.
\newblock Moduli spaces of irreducible symplectic manifolds.
\newblock {\em Compos. Math.}, 146(2):404--434, 2010.

\bibitem[GHS13]{gritsenko}
V.~Gritsenko, K.~Hulek, and G.~K. Sankaran.
\newblock Moduli of {K}3 surfaces and irreducible symplectic manifolds.
\newblock In {\em Handbook of moduli. {V}ol. {I}}, volume~24 of {\em Adv. Lect.
  Math. (ALM)}, pages 459--526. Int. Press, Somerville, MA, 2013.

\bibitem[GP02]{goresky}
Mark Goresky and William Pardon.
\newblock Chern classes of automorphic vector bundles.
\newblock {\em Invent. Math.}, 147(3):561--612, 2002.

\bibitem[Gri74]{topics}
Phillip Griffiths.
\newblock {\em Topics in algebraic and analytic geometry}.
\newblock Princeton University Press, Princeton, N.J.; University of Tokyo
  Press, Tokyo, 1974.
\newblock Written and revised by John Adams, Mathematical Notes, No. 13.

\bibitem[Hel64]{helgason}
S.~Helgason.
\newblock {\em Differential Geometry and Symmetric Spaces}.
\newblock Pure and applied mathematics. Academic Press, 1964.

\bibitem[Huy03]{hk}
Daniel Huybrechts.
\newblock Compact hyperk\"ahler manifolds.
\newblock In {\em Calabi-{Y}au manifolds and related geometries
  ({N}ordfjordeid, 2001)}, Universitext, pages 161--225. Springer, Berlin,
  2003.

\bibitem[Huy16]{huybrechts}
Daniel Huybrechts.
\newblock {\em Lectures on {K}3 surfaces}, volume 158 of {\em Cambridge Studies
  in Advanced Mathematics}.
\newblock Cambridge University Press, Cambridge, 2016.

\bibitem[IRR14]{imam}
\"Ozlem Imamoglu, Martin Raum, and Olav~K. Richter.
\newblock Holomorphic projections and {R}amanujan's mock theta functions.
\newblock {\em Proc. Natl. Acad. Sci. USA}, 111(11):3961--3967, 2014.

\bibitem[Iwa97]{iwaniec}
Henryk Iwaniec.
\newblock {\em Topics in classical automorphic forms}, volume~17 of {\em
  Graduate Studies in Mathematics}.
\newblock American Mathematical Society, Providence, RI, 1997.

\bibitem[Mau14]{maulik}
Davesh Maulik.
\newblock Supersingular {K}3 surfaces for large primes.
\newblock {\em Duke Math. J.}, 163(13):2357--2425, 2014.
\newblock With an appendix by Andrew Snowden.

\bibitem[McG03]{mcgraw}
William~J. McGraw.
\newblock The rationality of vector valued modular forms associated with the
  {W}eil representation.
\newblock {\em Math. Ann.}, 326(1):105--122, 2003.

\bibitem[MR05]{royer}
Fran\c{c}ois Martin and Emmanuel Royer.
\newblock Formes modulaires et p\'eriodes.
\newblock In {\em Formes modulaires et transcendance}, volume~12 of {\em
  S\'emin. Congr.}, pages 1--117. Soc. Math. France, Paris, 2005.

\bibitem[Ogu03]{oguiso}
Keiji Oguiso.
\newblock Local families of {$K3$} surfaces and applications.
\newblock {\em J. Algebraic Geom.}, 12(3):405--433, 2003.

\bibitem[Oh04]{oh}
Hee Oh.
\newblock Hardy-{L}ittlewood system and representations of integers by an
  invariant polynomial.
\newblock {\em Geom. Funct. Anal.}, 14(4):791--809, 2004.

\bibitem[Pet15]{peterson}
Arie Peterson.
\newblock {\em {Modular forms on the moduli space of polarised K3 surfaces}}.
\newblock PhD thesis, {Korteweg-de Vries Institute for Mathematics}, June 2015.

\bibitem[Sar90]{sarnak}
Peter Sarnak.
\newblock {\em Some applications of modular forms}, volume~99 of {\em Cambridge
  Tracts in Mathematics}.
\newblock Cambridge University Press, Cambridge, 1990.

\bibitem[Sch73]{schmid}
Wilfried Schmid.
\newblock Variation of hodge structure: The singularities of the period
  mapping.
\newblock {\em Inventiones mathematicae}, 22:211--320, 1973.

\bibitem[Ser77]{cours}
Jean-Pierre Serre.
\newblock {\em Cours d'arithm\'etique}.
\newblock Presses Universitaires de France, Paris, 1977.
\newblock Deuxi\`eme \'edition revue et corrig\'ee, Le Math\'ematicien, No. 2.

\bibitem[Sie35]{siegeluber}
Carl~Ludwig Siegel.
\newblock Über die analytische {T}heorie der quadratischen {F}ormen.
\newblock {\em Ann. of Math. (2)}, 36(3):527--606, 1935.

\bibitem[ST17]{tang}
A.~N. {Shankar} and Y.~{Tang}.
\newblock {Exceptional splitting of reductions of abelian surfaces}.
\newblock {\em ArXiv e-prints}, June 2017.

\bibitem[Vau97]{vaughan}
R.~C. Vaughan.
\newblock {\em The {H}ardy-{L}ittlewood method}, volume 125 of {\em Cambridge
  Tracts in Mathematics}.
\newblock Cambridge University Press, Cambridge, second edition, 1997.

\bibitem[Voi02]{voisin}
C.~Voisin.
\newblock {\em Th{\'e}orie de Hodge et g{\'e}om{\'e}trie alg{\'e}brique
  complexe}.
\newblock Collection SMF. Soci{\'e}t{\'e} Math{\'e}matique de France, 2002.

\bibitem[Yaf07]{andreoort}
Andrei Yafaev.
\newblock The {A}ndr\'e-{O}ort conjecture---a survey.
\newblock In {\em {$L$}-functions and {G}alois representations}, volume 320 of
  {\em London Math. Soc. Lecture Note Ser.}, pages 381--406. Cambridge Univ.
  Press, Cambridge, 2007.

\bibitem[Zuc81]{zucker}
Steven Zucker.
\newblock Locally homogeneous variations of {H}odge structure.
\newblock {\em Enseign. Math. (2)}, 27(3-4):243--276, 1981.

\end{thebibliography}



\end{document}